\renewcommand\section{\@startsection{section}{1}%
  \z@{.7\linespacing\@plus\linespacing}{.5\linespacing}%
  {\normalfont\bfseries\centering}}
\renewcommand\paragraph{\@startsection{paragraph}{4}%
  \z@\z@{-\fontdimen2\font}%
  {\normalfont\bfseries}}
\newlist{conditions}{enumerate}{1}
\setlist[conditions]{label=(\alph*),ref=(\alph*)}
\newcommand{\mylabel}[2]{#2\def\@currentlabel{#2}\label{#1}}
\newtheorem{thm}{Theorem}[section]
\newtheorem{cor}[thm]{Corollary}
\newtheorem{lem}[thm]{Lemma}
\newtheorem{prop}[thm]{Proposition}
\newtheorem{ass}[thm]{Assumption}
\newtheorem{remark}[thm]{Remark}
\newtheoremstyle{definition}%
{}{}%
{\itshape}{}%
{\bfseries}{.}%
{\newline}%
{\thmname{#1}\thmnumber{ #2}:\thmnote{ #3}}
\theoremstyle{definition}
\newtheorem{defn}[thm]{Definition}
\def\smallskip{\vskip\smallskipamount}
\def\medskip{\vskip\medskipamount}
\def\bigskip{\vskip\bigskipamount}
\newcounter{wraprem}[section]
\def\claim{\par\medskip\noindent\refstepcounter{wraprem}\hbox{\bf Remark \arabic{section}.\arabic{wraprem}}
\it\ 
}
\def\endclaim{
\par\medskip}
\newenvironment{wraprem}{\claim}{\endclaim}
\let\expandafter\oldproof\csname\string\proof\endcsname
\let\oldendproof\endproof
\renewenvironment{proof}[1][\proofname]{%
  \oldproof[{\bf #1}]%
}{\oldendproof}
\let\hom\relax
\newcommand{\hom}{\rm hom}
\renewcommand{\Pr}{\mathbb{P}}
\newcommand{\Pas}{$\Pr$-a.s.\ }
\newcommand{\Pae}{$\Pr$-a.e.\ }
\newcommand{\Prw}{\mathrm{P}}
\newcommand{\Erw}{\mathrm{E}}
\renewcommand{\d}{\,\mathrm{d}}
\newcommand{\pb}[1]{\parbox[0pt][#1][c]{0cm}{}}
\renewcommand{\vec}[1]{\hat{#1}}
\newcommand{\del}{\partial}
\newcommand{\supp}{\mathop{\mathrm{supp}}}
\newcommand{\eps}{\varepsilon}
\newcommand{\La}{\mathcal{L}_\w}
\newcommand{\Laeps}{\La^\eps}
\newcommand{\Lz}{\mathcal{L}^0}
\newcommand{\Pa}{\mathcal{P}}
\newcommand{\Pavt}{\Pa^{\w,V}_t}
\newcommand{\ue}{u^\eps}
\newcommand{\Z}{\mathbb{Z}}
\newcommand{\Zd}{\Z^{d}}
\newcommand{\Zdeps}{\Z^d_\eps}
\newcommand{\R}{\mathbb{R}}
\newcommand{\Rd}{\R^{d}}
\newcommand{\N}{\mathbb{N}}
\newcommand{\Hz}{{\mathcal{H}_0}}
\newcommand{\Heps}{{\mathcal{H}_\eps}}
\newcommand{\Reps}{\mathcal{R}_\eps}
\newcommand{\Repsadj}{\Reps^\ast}
\newcommand{\bzeps}{b\left( z, \frac{\eps}{2} \right)}
\newcommand{\Bz}{\mathcal{B}_0}
\newcommand{\Beps}{\mathcal{B}_\eps}
\newcommand{\E}{\mathbb{E}}
\newcommand{\Leb}{\mathscr{L}}
\newcommand{\edges}{E}
\newcommand{\w}{\omega}
\newcommand{\weakto}{\rightharpoonup}
\renewcommand{\P}{\mathbb{P}}
\newcommand{\rmw}{{\rm w}}
\newcommand{\cgu}{\nu} 
\newcommand{\ldef}{:=}
\newcommand{\mD}{\ensuremath{\mathrm{D}}}
\newcommand{\cO}{\ensuremath{\mathcal O}}
\newcommand{\Norm}[2]{%
  \ensuremath{%
    \mathchoice{\big\lVert #1 \big\rVert}
     {\lVert #1 \rVert}
     {\lVert #1 \rVert}
     {\lVert #1 \rVert}_{\raisebox{-.0ex}{$\scriptstyle #2$}}
  }
}
\begin{document}
  \DeclareGraphicsExtensions{.pdf}
  \numberwithin{equation}{section}

  \title[Homogenization RCM]{Homogenization theory for the random conductance model with degenerate ergodic weights and unbounded-range jumps}
  
  \author{Franziska Flegel}
\address{Weierstrass Institut f\"ur Angewandte Analysis und Stochastik (WIAS)}
\curraddr{Mohrenstr. 39, 10117 Berlin}
\email{flegel@wias-berlin.de}
\thanks{}
 
\author{Martin Heida}
\address{Weierstrass Institut f\"ur Angewandte Analysis und Stochastik (WIAS)}
\curraddr{Mohrenstrasse 39, 10117 Berlin}
\email{heida@wias-berlin.de}
\thanks{}
 
\author{Martin Slowik}
\address{Technische Universit\"at Berlin}
\curraddr{Strasse des 17. Juni 136, 10623 Berlin}
\email{slowik@math.tu-berlin.de}
\thanks{}

\subjclass[2000]{60H25, 60K37, 35B27, 35R60, 47B80, 47A75}
 
\keywords{Random conductance model, homogenization, Dirichlet eigenvalues, local times, percolation}
 
\date{\today}
 
\dedicatory{}

\begin{abstract}
  We study homogenization properties of the discrete Laplace operator with random conductances on a large domain in $\Zd$.
  More precisely, we prove almost-sure homogenization of the discrete Poisson equation and of the top of the Dirichlet spectrum.
  
  We assume that the conductances are stationary, ergodic and nearest-neighbor conductances are positive.
  In contrast to earlier results, we do not require uniform ellipticity but certain integrability conditions on the lower and upper tails of the conductances.
  We further allow jumps of arbitrary length.
  
  Without the long-range connections, the integrability condition on the lower tail is optimal for spectral homogenization.
  It coincides with a necessary condition for the validity of a local central limit theorem for the random walk among random conductances.
  As an application of spectral homogenization, we prove a quenched large deviation principle for the normalized and rescaled local times of the random walk in a growing box.
  
  Our proofs are based on a compactness result for the Laplacian's Dirichlet energy, Poincar\'e inequalities, Moser iteration and two-scale convergence.
\end{abstract}

\newgeometry{text={32pc,670pt},centering}
\maketitle

\tableofcontents
\restoregeometry

\section{Introduction}
Disordered media may homogenize in various ways.
For example, in a microscopically inhomogeneous material, the solution to the heat equation might satisfy a local limit theorem when viewed on larger scales.
This might be on the entire space or within a bounded domain with certain boundary conditions.
In bounded domains we can furthermore ask whether the solution to a Poisson equation homogenizes as the domain's diameter, i.e., the macroscopic scale, grows to infinity.
Or, alternatively, we can wonder whether the Dirichlet spectrum of the associated Laplace operator converges in some sense or whether the occupation time measures of the corresponding diffusion fulfill a large deviation principle.
When we say that we let the macroscopic scale grow to infinity, this is always in comparison to a microscopic scale $\eps$, which might tend to zero instead.

Although all these aspects of homogenization are \emph{a priori} different, intuition suggests that they should somehow be related.
Especially if the associated Laplace operator is self-adjoint, i.e., it is the generator of a reversible random walk, then the homogenization of the Poisson equation is strongly linked to spectral homogenization (see \cite[Chapter 11]{JKO1994}). Spectral homogenization in turn is linked to the validity of a large deviation principle for the occupation time measures of a random walk in bounded domains \cite[Theorem 5]{DV75}.
It is therefore plausible that these aspects of homogenization should hold under similar conditions.

For self-adjoint Laplace operators, a crucial condition for many kinds of asymptotic homogenization is -- apart from ergodicity -- the validity of a Poincar{\'e} inequality with an optimal constant independent of $\eps$ (uniform Poincar\'e inequality, see also \eqref{equ:PI}).
For spectral homogenization this is immediately evident since the optimal Poincar{\'e} constant is the inverse of the principal Dirichlet eigenvalue of the Laplacian (see Remark \ref{rem:optimality}).
In the situation of the present paper, we will see that the uniform Poincar{\'e} inequality is necessary and carries us quite far, although it is not completely sufficient for our results.
However, it leads us to conditions that are optimal (up to a critical case, see Remark \ref{rem:optimality}).

In the present paper, we examine a discrete disordered medium that belongs to a class of random conductance models on the lattice $\Zd$ with stationary and ergodic conductances on nearest-neighbor and unbounded-range connections.
Random conductance models are of high mathematical and physical interest (see \cite{Biskup2011review, Bouchaud1990} and references therein).
For these models, \cite[Theorem 1.11, Remark 1.12]{ADS16} already used the Poincar{\'e} inequality and a related Sobolev inequality to prove the validity of a local limit theorem for the heat kernel in the case where only nearest neighbors are connected.
As we explained above, a uniform Poincar{\'e} inequality is also necessary for spectral homogenization.
In this model, its validity depends on the integrability of the tails of the conductances (see e.g.\ Proposition \ref{prop:localPoincare} and \cite[Proposition 2.1]{ADS16}).
To be more precise, let $\w(e)$ denote the random conductance on the edge $e$ and define
\begin{align*}
  q\;=\;\sup\{ r\;\colon\, \E [\w(e)^{-r}]<\infty \}
\end{align*}
where $\E$ denotes the expectation with respect to $\w(e)$.
For the moment, let us assume that only nearest neighbors are connected, or equivalently, that only nearest-neighbor conductances carry a positive conductance.
Then the crucial assumption is
\begin{align}\label{equ:LowerMomentIntro}
  q
  \;>\;
  q_{\rm c}
  \;=\;
  \begin{cases}
    d/2, &\text{ for general stationary, ergodic conductances and $d\geq 2$,}\\
    1/4, &\text{ for i.i.d.\ conductances and $d\geq 2$,}\\
    1, &\text{ for $d=1$, }
  \end{cases}
\end{align}
see Assumption \ref{ass:Integ}.
Additionally we require that $\E [\w(e)]<\infty$ (cf.\ Assumption \ref{ass:Environ}\ref{ass:momcond}).
As we explain in Remark \ref{rem:optimality}, up to the critical case $q=q_{\rm c}$, the condition in \eqref{equ:LowerMomentIntro} is optimal.
In fact, $\E [\w(e)^{-q_{\rm c}}]<\infty$ is sufficient for the Poincar{\'e} inequality but not for the Moser iteration, which we use in Section \ref{subsec:MaximalInequalities}.
If $q<q_{\rm c}$, then it is possible (and in the i.i.d.\ case even almost sure \cite{Flegel2016}) that trapping structures as in Figure \ref{fig:trap} appear, which immediately contradict a uniform Poincar{\'e} inequality.

In addition to Poincar\'e and Sobolev inequalities, our proofs rely on stochastic two-scale convergence, an analytic method that is based on the ergodic theorem and was introduced in \cite{zhikov2006homogenization}.

A related problem of a nonlocal operator was recently studied by Piatnitski and Zhizhina \cite{piatnitski2017periodic} in the periodic case, where, as in the present article, the limit operator localizes to a second order elliptic operator.

For the random conductance model with conductances $\w(e)\in [0,c_0]$ ($c_0<\infty$) restricted to nearest-neighbor connections, Faggionato \cite{Faggionato2008} already used stochastic two-scale convergence in order to prove homogenization of the Laplace operator with a spectral shift on the infinite connected component of $\Zd$.
The spectral shift compensates for the lack of a Poincar\'e inequality. The very first successful application of two-scale convergence in the random conductance model seems to be by Mathieu and Piatnitski \cite{MP07}.

As a consequence of the homogenization of the Poisson equation on bounded domains, the homogenization of the top of the Dirichlet spectrum follows by the methods of \cite[Chapter 11]{JKO1994}, see Theorem \ref{thm:spectrum}.
For this result, Remark \ref{rem:optimality} explains in which sense Condition \eqref{equ:LowerMomentIntro} is sharp.
For i.i.d.\ conductances \eqref{equ:LowerMomentIntro} even decides between a completely homogenizing phase, which we cover in this paper, and a completely localizing phase of the principal Dirichlet eigenvector, which was studied in \cite{Flegel2016}.
We thus extend the results of Faggionato \cite{Faggionato2012} and Boivin and Depauw \cite{BoivinDepauw2003}.
Faggionato showed spectral homogenization in dimension $d=1$ under Condition \eqref{equ:LowerMomentIntro} and $\E [\w(e)]<\infty$,
whereas Boivin and Depauw proved spectral homogenization for conductances that are uniformly bounded from above and away from zero (uniform ellipticity).

As an application of the spectral homogenization, we prove a quenched large deviation principle for the occupation time measures, given that the random walk stays in a slowly growing box, see Proposition \ref{prop:LDP}.
We extend the results of \cite[Theorem 1.8]{KoenigWolff}, where the authors use the deep connection between the Dirichlet energy of the Laplace operator and the Donsker-Varadhan rate function of the occupation time measures of the associated random walk.

In the recent paper \cite{Neukamm2017}, the authors prove that under the same integrability conditions as in the present paper, the Dirichlet energy of the random conductance Laplacian $\Gamma$-converges to a deterministic, homogeneous integral (see their Corollary 3.4 and Proposition 3.24).
Together with their compactness result \cite[Lemma 3.14]{Neukamm2017}, Theorem 13.5 of \cite{DalMaso1993} implies the homogenization of the Poisson equation for models where the connections are of finite range.
With the method employed in the present paper, however, we can allow for unbounded-range connections (see Assumption \ref{ass:Environ}\ref{ass:momcond}). 
Further, we identify the corresponding limit operator in the long-range case (see Theorem \ref{thm:twoscale-strong})
and we prove the two-scale convergence of the gradient of the solution to the Poisson equation. 
Note that in the case of bounded-range connections, the result of \cite{Neukamm2017} together with the compact embedding of our Section \ref{sec:Compactness} implies that Assumption \ref{ass:Integ}\ref{ass:Integ:dhalf} is sufficient instead of Assumption \ref{ass:Integ}\ref{ass:Integ:q} for the homogenization result, see Remark \ref{rem:critcase}.

\subsection{Model and notation}
\label{sec:model}
Let us consider a graph with vertex set $\Zd$ and edge set
\begin{align}
 \edges \;=\; \{ \{ x,y \}:\, x,y\in \Z^d \text{ and } x\neq y \}\, ,
 \label{equ:DefEdges}
\end{align}
i.e., we assume that there exists an undirected edge between any two sites $x,y\in\Z^d$. 
We further assume that each edge $e$ carries a nonnegative random variable $\w(e)$, which we call the \emph{conductance} of the edge $e$.
Moreover, we call the family $\w = (\w(e))_{e\in\edges}$  \emph{environment} or \emph{landscape}.
If $e = \{ x,x+z \}$ for $x,z\in \Z^d$ ($z\neq 0$), we will also write $\w_{x,z}$ instead of $\w(e)$.

Moreover, $\tau_x$ denotes the translations by a vector $x\in\Z^d$, i.e., we write
\begin{align*}
  \w_{x,z} \;=\; (\tau_x \w)_{0,z}\, .
\end{align*}
Since the edges in \eqref{equ:DefEdges} are undirected, we have $\w_{x,z} = \w_{x+z,-z}$.

In the variable-speed random conductance model, we consider the Laplacian $\La$, which acts on real-valued functions $f\in\ell^2 (\Zd)$ as
\begin{align}
  (\La f)(x) \;=\;\sum_{z\in\Z^d} \w_{x,z}(f(x+z) - f(x))\, , \qquad \left( x\in\Z^d\right)\, .
  \label{equ:DefGenerator}
\end{align}
Note that $\La$ is \Pas well-defined under Assumption \ref{ass:Environ}\ref{ass:momcond}.
Since the conductances are symmetric, i.e., $\w_{x,z} = \w_{x+z,-z}$, the associated Markov process is reversible.
As we explain in Section \ref{sec:discrder}, the Laplacian $\La$ is the discrete analogue of a divergence-form operator with random weights.

We denote the probability space that governs the environment $\w$ by
\begin{align}
  (\Omega,\mathcal{F},\Pr)\;=\;\left( [0,\infty]^E , \mathcal{B} ( [0,\infty] )^{\otimes E }, \Pr \right)\, ,
  \label{eq:def-prob-space}
\end{align}
and the expectation with respect to the law $\Pr$ by $\E$.

For any $\omega\in\Omega$, we denote the set of \emph{open edges} by
\begin{align*}
  \cO
  \;\equiv\;
  \cO(\w)
  \;\ldef\;
  \{e \in \edges \,:\, \w(e) > 0\}
  \;\subset\;
  E.
\end{align*}
Further, $E_d \subset\edges$ denotes the set of all undirected nearest-neighbor bonds.

In this paper we will usually assume that the law $\Pr$ fulfills the following conditions.
\begin{ass}
  \label{ass:Environ}
  ~
  \begin{conditions}
    \item\label{ass:erg} The law $\Pr$ is stationary and ergodic with respect to spatial translations $(\tau_x)_{x\in\Z^d}$.
    \item\label{ass:momcond} $\E\left[ \sum_{z\in\Z^d}\w_{0,z} |z|^2 \right] <\infty$.
    \item\label{ass:FullLattice} For \Pae $\w$, the set $\cO(\w)$ of open edges contains the set $E_d$ of nearest-neighbor edges of $\Zd$.
  \end{conditions}
\end{ass}

In addition to Assumption \ref{ass:Environ}, our main results rely on an integrability condition for the lower tails of the conductances, for which we need to define the notion of paths in $(\Zd, E_d)$.
A path of length $l$ between $x$ and $y$ in $(\Zd, E_d)$ is a sequence $(x_i : i = 0, \ldots, l)$ with the property that $x_0 = x$, $x_l = y$ and $\{x_i, x_{i+1}\} \in E_d$ for any $i = 0, \ldots, l-1$.
If $\gamma=(x_i : i = 0, \ldots, l)$ is a path and there exists $i\in\{ 1,\ldots,l-1\}$ such that $\{x_i,x_{i+1}\}=e$, then we use the shorthand notation $e\in\gamma$.

For any $e \in E_d$ and $\mathbb{N} \ni l < \infty$, let $\Gamma_l(e)$ be a collection of paths in $(\Zd, E_d)$ between the vertices of the edge $e$ with length at most $l$ such that no two paths in $\Gamma_l(e)$ share an edge.
We define the measures $\nu^{\w}$ and $\nu_l^{\w}$ on $\Zd$ by
\begin{align}
  \nu^{\w}(x)
  \;\ldef\;
  \sum_{e\in E_d\colon\, x\in e} \w(e)^{-1}
  \qquad \text{and} \qquad
  \nu_l^{\w}(x)
  \;\ldef\;
  \sum_{e\in E_d\colon\, x\in e} \w_l (e)^{-1},
  \label{eq:def:nu}
  \end{align}
where
\begin{align}
  \w_l (e)^{-1}
  \;\ldef\;
  \min_{\gamma \in \Gamma_l (e)} \sum_{e' \in \gamma} \w(e')^{-1}\, .
  \label{equ:Defwl}
\end{align}
We let $\gamma_l^{\rm opt} (e)$ denote the minimizer of the RHS of \eqref{equ:Defwl}.
For an example of how to choose $\Gamma_9$ reasonably for the nearest-neighbor lattice $(\Z^d, E_d)$ if the conductances are independent and identically distributed, see Figure \ref{fig:IndepPaths}.

\begin{ass}[Lower moment condition]
  \label{ass:Integ}
  If $d=1$, then $\E[1/\w(e)]<\infty$ for any $e\in E_d$.
  In addition, if $d\geq 2$, then
  \begin{conditions}
     \item\label{ass:Integ:dhalf} there exists $l\in\mathbb{N}$ such that $\E \left[ (\nu^\omega_l (0))^{d/2} \right]<\infty$.
     \item[\mylabel{ass:Integ:q}{(a')}] there exists $l\in\mathbb{N}$ and $q>d/2$ such that $\E \left[ (\nu^\omega_l (0))^q \right]<\infty$.
  \end{conditions}
\end{ass}

\begin{remark}\label{rem:critcase}
  Note that Assumption \ref{ass:Integ}\ref{ass:Integ:dhalf} is sufficient for the Poincar\'e inequalities (Section \ref{sec:PI}) and the compact embedding (Section \ref{sec:Compactness}).
  The only reason why we need Assumption \ref{ass:Integ}\ref{ass:Integ:q} is the Moser iteration in the proof of Proposition \ref{prop:linftyPoiss}, which we need for the Auxiliary Lemma \ref{lem:twoscale-weak}.
  In fact, if we would assume that the length of the connections was bounded, or in other words, there exists $R<\infty$ such that \Pas $\cO (\omega) \subseteq \{ e \colon |e|<R \} $, then the authors of \cite{Neukamm2017} proved $\Gamma$-convergence under Assumption \ref{ass:Integ}\ref{ass:Integ:dhalf}.
  Therefore the compact embedding of Section \ref{sec:Compactness} implies the homogenization result Theorem \ref{thm:twoscale-strong} and thus Assumption \ref{ass:Integ}\ref{ass:Integ:dhalf} is sufficient.
\end{remark}

\setcounter{wraprem}{\value{thm}}
\setlength\columnsep{20pt}
\begin{wrapfigure}{r}{3.3cm}
  \vspace{-.4cm}
  \includegraphics[width=\linewidth]{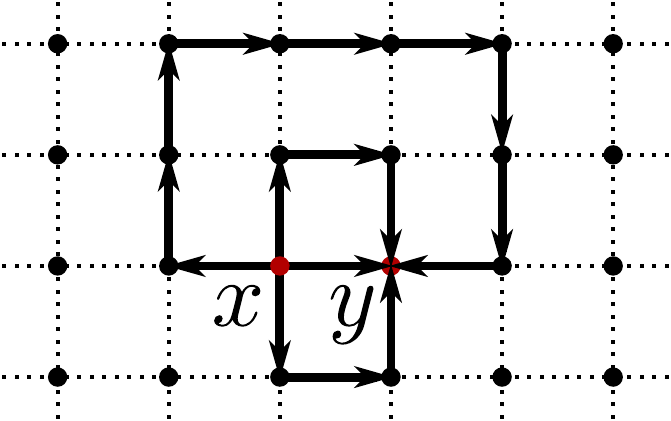}
  \caption{\\Independent paths\label{fig:IndepPaths}}
  \vspace{-0.2cm}
\end{wrapfigure}
\begin{wraprem}
  \label{rem:iid}
  Generally, $\E[\w(e)^{-d/2}]<\infty$ is sufficient for Assumption \ref{ass:Integ}\ref{ass:Integ:dhalf}.
  This can even be improved if the conductances $\w(e)$ $(e\in\edges)$ are independent and identically distributed (i.i.d.\@) and $d\geq 2$.
  For example, on the nearest-neighbor lattice $(\Zd,E_d)$ with independent and identically distributed conductances, Assumption \ref{ass:Integ}\ref{ass:Integ:dhalf} holds if $\E \bigl[ \w(e)^{-1/4} \bigr]<\infty$ for any edge $e\in E_d$.
  Similarly, Assumption \ref{ass:Integ}\ref{ass:Integ:q} holds if there exists $q>q_{\rm c}=1/4$ such that $\E \bigl[ \w(e)^{-q} \bigr]<\infty$ for any edge $e\in E_d$.
  This follows because any two sites in $\Z^d$ are connected through $2d$ independent nearest-neighbor paths (see Figure \ref{fig:IndepPaths}, cf. \cite[Fig. 2]{ADS16}, \cite[Fig. 2.1]{Kesten1984}).
  
  If we added further links to the set $E_d$, the number of independent paths between any two sites would increase whence the critical exponent $q_{\rm c}$ would decrease.
  If we assumed that $E_d$ would contain all the links of $E$, then it would even be sufficient to assume that there exists $q>0$ such that $\E \bigl[ \w(e)^{-q} \bigr]<\infty$.
  Note that in order not to violate Assumption \ref{ass:Environ}\ref{ass:momcond}, we would assume in this case that $\w(e)=\tilde{\w}(e)/|e|^{\alpha}$ where the $(\tilde{\w}(e))_e$ are i.i.d.\@, $\alpha>d+2$ and $|e|$ is the euclidean length of the edge $e$.
\end{wraprem}
\setcounter{thm}{\value{wraprem}}

\subsection[The rescaled lattice \texorpdfstring{$\Z^d_\eps$}{Zdeps}]{The rescaled lattice \texorpdfstring{$\boldsymbol{\Z^d_\eps}$}{Zdeps}}
\label{sec:rescaledZd}
We aim to consider the behavior of the operator $\La$ in boxes of the form $Q_n := (-n,n)^d \cap \Zd$ with zero Dirichlet boundary conditions.
More precisely, we fix an environment $\w$ on the entire $\Zd$, let the box size $n$ grow to infinity and want to characterize the behavior of solutions to the Poisson equation and the spectral problem.
For this purpose we use analytic techniques as introduced in Section \ref{sec:AnalyticTools}.
Regarding these techniques, it is more natural to replace the lattice $\Zd$ by the rescaled lattice $\Zdeps\ldef\eps\Zd$ and the growing box $Q_n$ by the box $Q_\eps \ldef Q\cap \Zdeps$ with $Q=(-1,1)^d$ and $\eps=n^{-1}$.

In this context, the Laplacian defined in \eqref{equ:DefGenerator} corresponds to the accelerated operator $\Laeps$ which acts on real-valued functions $f\in\ell^2 (\Zdeps)$ as
\begin{align}
  (\Laeps f) (x) &\;=\; \eps^{-2}\sum_{z\in\Zd} \w_{\frac{x}{\eps},z}\left[f\left(x+\eps z\right) - f\left(x\right)\right]\, , \qquad \left( x\in\Zdeps\right)\, ,
  \label{equ:DefGeneratorEps}
\end{align}
where the conductances $\w_{\frac{x}{\eps},z}$ remain random variables associated with the links in the edge set $\edges$, i.e., the links between sites in $\Z^d$.
Note that if $\La$ is the generator of a Markov process $(X_t)_{t\geq 0}$, then $\Laeps$ is the generator of the diffusively rescaled Markov process $(X_t^\eps)_{t\geq 0}$, which fulfills $X_t^\eps = \eps X_{\eps^{-2} t}$.

For $\eps, p>0$ and $A_\eps\subseteq \Zdeps$, we define the function spaces
\begin{align}
  \ell^p_\eps (A_\eps) \ldef \Bigl\{ v\;\colon\, \Zdeps \to \R \colon\,\mspace{3mu} \eps^{d}\mspace{-3mu}\sum_{x\in A_\eps}v(x)^p<\infty \Bigr\}
  \quad\text{with }
  \| v \|_{\ell^p_\eps (A_\eps)} \ldef \biggl( \eps^{d}\mspace{-3mu}\sum_{x\in A_\eps}v(x)^p \biggr)^{1/p}\, .
  \label{equ:DefEllpeps}
\end{align}
We abbreviate $\ell^p_\eps \ldef \ell^p_\eps (\Zdeps)$.

Analogously to $\ell^p_\eps$, we introduce the Hilbert spaces $\Hz, \Heps$ through
\begin{align*}
   \Hz &\;=\; \left\{ v\in L^2 \left(\mathbb{R}^d \right) \;\colon\, \supp v \subseteq Q\right\},\quad
   \Heps \;=\; \left\{ v\in\ell^2_\eps (\Zdeps) \;\colon\,  \supp v \subseteq Q_\eps\right\}
\end{align*}
and let $\Hz$ and $\Heps$ be equipped with the scalar products
\begin{align*}
\quad \langle u, v \rangle_{\Hz} &\;=\; \int_{\R^d} u(x)v(x) \d x\, , \quad\langle u^\eps, v^\eps \rangle_{\Heps} \;=\; \eps^d \sum_{z\in\Zdeps} u^\eps (z)\, v^\eps (z)\,.
\end{align*}
For $z\in\Zdeps$, we let $b(z,\eps/2)$ denote the half-open ball $z+ \left( -\eps/2, \eps/2 \right]^d$.
We define the local averaging operator $\Reps\colon\, \Hz \to \Heps$ acting on functions $f\in\Hz$ by
\begin{align}
	\left(\Reps f\right)(z) \;=\; \eps^{-d} \,\,\,\, \int_{\bzeps} f(x) \d x\quad z\in\Zdeps\, .
	\label{equ:Reps}
\end{align}
A direct calculation shows that its adjoint operator $\Repsadj\colon\, \Heps \to \Hz$ is given by
\begin{align}
	\Repsadj v^\eps \;=\; \sum_{z\,\in\, \Zdeps} v^\eps (z) \mathds{1}_{b\left( z,\frac{\eps}{2}\right)}\qquad (v^\eps\in \Heps)\, ,
	\label{equ:Repsadj}
\end{align}
where we write $\mathds{1}_{b\left( z,\frac{\eps}{2}\right)}$ for the characteristic function of $b\left( z,\frac{\eps}{2}\right)$.

\section{Main results}
\label{sec:mainresults}
\subsection{Homogenization}
Given a function $f^\eps\colon\, \Zdeps\to \R$, we are interested in the solution $u^\eps\in \Heps$ of the Poisson problem
\begin{align}
	-\Laeps u^\eps \;=\; f^\eps \qquad\text{on }Q_\eps
	\label{equ:epsPoisson}
\end{align}
with zero Dirichlet conditions.
The above problem has a unique solution because $-\Laeps$ is invertible on $\Heps$.

\begin{thm}
\label{thm:twoscale-strong}
Let $f^{\eps}\colon Q_\eps \to\R$ be a sequence of functions such that $\Repsadj f^{\eps}\weakto f$ weakly in $L^{2}(Q)$ for some $f\in L^{2}(Q)$.
If Assumptions \ref{ass:Environ} and \ref{ass:Integ}\ref{ass:Integ:q} hold, then for almost all $\omega\in\Omega$ the sequence of solutions $\ue\in\Heps$ to the problem \eqref{equ:epsPoisson} satisfies $\Repsadj \ue\to u$ strongly in $L^{2}(Q)$,
where $u\in H_{0}^{1}(Q)\cap H^{2}(Q)$ solves the limit problem
\begin{equation}
-\nabla\cdot\left(A_{\mathrm{hom}}\nabla u\right)=2f\,,
\label{eq:strong-limit-equation}
\end{equation}
almost everywhere in $Q$ with $A_{\hom}$ defined through \eqref{eq:Definition-A-hom}.
\end{thm}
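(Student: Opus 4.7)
The plan is to combine a priori estimates with stochastic two-scale convergence in the spirit of \cite{zhikov2006homogenization}. First I would test the Poisson equation \eqref{equ:epsPoisson} against $\ue$ itself and invoke the uniform Poincar\'e inequality of Section \ref{sec:PI} together with Assumption \ref{ass:Environ}\ref{ass:momcond}. Since $\Repsadj f^{\eps}\weakto f$ in $L^{2}(Q)$ the sequence $\Repsadj f^{\eps}$ is bounded in $L^{2}(Q)$, so this yields, \Pas uniform bounds on $\|\ue\|_{\Heps}$ and on the associated discrete Dirichlet energy $\langle \ue, -\Laeps \ue\rangle_{\Heps}$, including control of the long-range edge contributions.

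Next I would apply the compact embedding of Section \ref{sec:Compactness} to extract a subsequence (still indexed by $\eps$) along which $\Repsadj \ue \to u$ strongly in $L^{2}(Q)$ for some $u\in H^{1}_{0}(Q)$, and such that the appropriate discrete gradient two-scale converges to $\nabla u(x)+\chi(x,\w)$ with $\chi(x,\cdot)\in\Lpot$ for a.e.\ $x\in Q$. Identifying this two-scale limit correctly is the main obstacle of the argument: the long-range jumps must be fitted into the two-scale framework, and this is where Lemma \ref{lem:twoscale-weak} is the decisive tool. Its proof relies on the $\ell^{\infty}$ bound of Proposition \ref{prop:linftyPoiss}, obtained by Moser iteration under Assumption \ref{ass:Integ}\ref{ass:Integ:q}, combined with the second-moment bound $\E[\sum_{z}\w_{0,z}|z|^{2}]<\infty$, which ensures that the tail of long edges does not contribute spuriously to the limit.

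Then I would pass to the two-scale limit in the weak formulation of \eqref{equ:epsPoisson} tested against oscillating functions of the form $\varphi(x)+\eps\,\psi(x,\tau_{x/\eps}\w)$, with $\varphi\in C^{\infty}_{c}(Q)$ and $\psi$ smooth in $x$ with values in a dense subspace of $\Lpot$. The resulting two-scale variational identity decouples: minimising in $\chi$ at fixed $\nabla u$ gives the corrector/cell problem, expresses $\chi$ as a linear function of $\nabla u$, and produces the effective matrix $A_{\hom}$ of \eqref{eq:Definition-A-hom}. Eliminating $\chi$ then yields \eqref{eq:strong-limit-equation}; the factor $2$ on the right-hand side reflects the symmetric double counting of bonds in the sum defining the generator \eqref{equ:DefGenerator}. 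Standard constant-coefficient elliptic regularity on the cube finally provides $u\in H^{2}(Q)$, and uniqueness of the limit problem promotes convergence along subsequences to convergence of the whole family $\ue$.
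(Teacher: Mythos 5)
Your plan accurately describes the proof of the auxiliary Lemma \ref{lem:twoscale-weak}, but not the proof of Theorem \ref{thm:twoscale-strong} itself, and there is a genuine gap in the reduction. The two-scale gradient convergence (Lemma \ref{lem:two-scale-gradient}), as well as the generalized ergodic theorem (Theorem \ref{thm:General-ergodic-thm}) which underlies it, both require the uniform bound $\sup_{\eps>0}\|\ue\|_\infty<\infty$. The only route the paper (and you) offer to such a bound is the Moser iteration of Proposition \ref{prop:linftyPoiss}, and that estimate needs $\sup_{\eps>0}\|f^\eps\|_{\ell^\infty(Q_\eps)}<\infty$ as input. Theorem \ref{thm:twoscale-strong}, however, only assumes $\Repsadj f^\eps\weakto f$ weakly in $L^2(Q)$, which gives a uniform $L^2$ bound on $f^\eps$ but certainly no $\ell^\infty$ control. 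So when you say you would ``extract a subsequence \dots such that the appropriate discrete gradient two-scale converges,'' that step is not justified under the stated hypotheses: you cannot invoke Lemma \ref{lem:two-scale-gradient} at that point.

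What is missing is the paper's actual reduction mechanism. The paper first proves Lemma \ref{lem:twoscale-weak} — which is precisely your argument, with the \emph{additional} hypothesis $\sup_\eps\|f^\eps\|_\infty<\infty$ — and then deduces the theorem by a duality/self-adjointness argument. Writing $\Beps=(-\Laeps)^{-1}$ and $\Bz=(-\Lz)^{-1}$, one has for each fixed $v\in C(\overline Q)$
\[
\int_{\Rd}\bigl(\Repsadj\ue\bigr)\,v \;=\; \bigl\langle f^\eps,\,\Beps\Reps v\bigr\rangle_{\Heps}\,,
\]
and the test function $\Reps v$ \emph{is} uniformly bounded in $\ell^\infty$, so Lemma \ref{lem:twoscale-weak} applies to the sequence $\Beps\Reps v$ and yields $\Repsadj\Beps\Reps v\to\Bz v$ strongly in $L^2(Q)$. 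Pairing this strong convergence with the weak convergence of $\Repsadj f^\eps$ and using self-adjointness of $\Bz$ gives $\int\Repsadj\ue\cdot v\to\int u\,v$ for all $v\in C(\overline Q)$, hence $\Repsadj\ue\weakto u$, and the compact embedding of Lemma \ref{lem:precompact} (whose energy bound you do obtain from the $L^2$ a priori estimate) upgrades this to strong convergence. Without this duality step your argument can only reach Lemma \ref{lem:twoscale-weak}; it cannot handle the general $f^\eps$ allowed in the theorem.
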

We prove this theorem at the end of Section \ref{ref:ProofPoisson}.
In Lemma \ref{lem:non-degeneracy} we prove that $A_{\hom}$ is strictly positive definite and by standard arguments $A_{\hom}$ is symmetric.

Based on Theorem \ref{thm:twoscale-strong}, we introduce the operator 
$$\forall u \in L^2 (Q)\qquad \Lz u:=\nabla\cdot\left(A_{\hom}\nabla u\right)\,,$$
such that $-\Lz$ is symmetric positive definite operator on $L^2(Q)$ with domain $H^2(Q)$. 
\setcounter{wraprem}{\value{thm}}
\setlength\columnsep{20pt}
\begin{wrapfigure}{r}{2.5cm}
  \vspace{.2cm}
  \includegraphics[width=\linewidth]{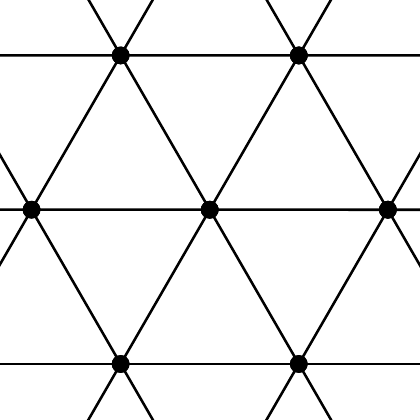}
  \caption{\\Triangular\\lattice.\label{fig:Triangular}}
  \vspace{-0.3cm}
\end{wrapfigure}
\begin{wraprem}
  \label{rem:OtherLattices}
  With our methods, Theorem \ref{thm:twoscale-strong} can be easily generalized for other lattices than $\Zd$.
  In order to apply our methods directly, we just have to require that the lattice is translationally invariant (for the two-scale convergence, see Section \ref{sec:two-scale}) and fulfills a Sobolev inequality (as in \eqref{eq:Sobolev:d=1} or \eqref{eq:Sobolev:Sa}) with isoperimetric dimension $d_{\rm ISO}$ (to obtain the necessary Poincar{\'e} inequalities and make the Moser iteration work).
  For example, the triangular lattice in Figure \ref{fig:Triangular} is translationally invariant and has isoperimetric dimension $d_{\rm ISO}=2$.
  If we therefore replace $\Zd$ by the triangular lattice and the dimension $d$ in Assumption \ref{ass:Integ} by the isoperimetric dimension $d_{\rm ISO}$, Theorem \ref{thm:twoscale-strong} still holds.
  
  Note that in view of Remark \ref{rem:iid}, we observe that in the case of i.i.d.\ conductances on the triangular lattice, Assumption \ref{ass:Integ}\ref{ass:Integ:dhalf} holds if $\E [\w(e)^{-1/6}]<\infty$.
\end{wraprem}
\setcounter{thm}{\value{wraprem}}

\begin{remark}
  \label{rem:perc}
  Although we focus here on the random conductance model with long-range jumps and positive nearest-neighbor conductances, our arguments do not require the full strength of this assumption.
  For instance, we can also extend the homogenization result to the \emph{nearest-neighbor} percolation case.
  More precisely, we can relax Assumption \ref{ass:Environ}\ref{ass:FullLattice} such that the set of open edges $\mathcal{O}(\w) \subset E_d$ forms a unique infinite cluster that satisfies both a volume regularity condition and a (weak) relative isoperimetric inequality on large scales, cf.\ \cite{DNS18}.
  Notice that in the nearest-neighbor percolation setting, similar homogenization results have also been obtained by Faggionato in \cite{Faggionato2008} under the additional assumption that the conductances are bounded from above.
\end{remark}

In order to infer the large deviation principle Proposition \ref{prop:LDP}, let us now consider the spectrum of the operators $-\Laeps + \Reps V$ with an arbitrary bounded, continuous potential $V\colon \Rd \to \R$.
On the domain $Q_\eps$ with zero Dirichlet conditions we can represent $-\Laeps + \Reps V$ as a real symmetric matrix and therefore we can choose the set $\bigl\{ \psi_j^{\eps}\bigr\}_{j=1,\ldots,k}$ of Dirichlet eigenvectors such that they form an orthonormal system.
By virtue of the Perron-Frobenius theorem (see e.g.\ \cite[Chapter 1]{SenetaNN}) the principal Dirichlet eigenvalue $\lambda_1^{\eps}$ is unique.
Thus, we now consider the problem
\begin{align}
\begin{aligned}
	&\psi_k^\eps \in \Heps, \quad \left(-\Laeps + \Reps V\right) \psi_k^\eps = \lambda_k^\eps \psi_k^\eps, \quad k = 1,2,\ldots\, ,\pb{1.5em}\\
	&\lambda_1^\eps < \lambda_2^\eps \leq \ldots \leq \lambda_k^\eps \ldots \,  ,\pb{1.5em}\\
	&\langle \psi_k^\eps, \psi_l^\eps \rangle_{\Heps} = \delta_{kl}\, .\pb{1.5em}
	\label{equ:DefEigvecEps}
\end{aligned}
\end{align}
Similarly, we consider the spectrum of the operator $\Lz$, i.e.,
\begin{align}
\begin{aligned}
	&\psi_k^0 \in \Hz, \quad \left( -\Lz +V\right)\psi_k^0 = \lambda_k^0 \psi_k^0, \quad k = 1,2,\ldots\, ,\pb{1.5em}\\
	&\lambda_1^0 < \lambda_2^0 \leq \ldots \leq \lambda_k^0\ldots \, ,\pb{1.5em}\\
	&\langle \psi_k^0, \psi_l^0 \rangle_{\Heps} = \delta_{kl}\, .\pb{1.5em}
\end{aligned}
\end{align}

In order to study the homogenization of \eqref{equ:DefEigvecEps} with a non-trivial potential $V$, we need the following result.
\begin{prop}
\label{prop:twoscale-strong}
Let $f^{\eps}\colon Q_\eps\to\R$ be a sequence of functions such that $\Repsadj f^{\eps}\weakto f$ weakly in $L^{2}(Q)$ for some $f\in L^{2}(Q)$.
Let $V\colon \Rd \to \R$ be a bounded, continuous potential such that $\liminf_{\eps\to0}\lambda_1^\eps>0$.
If Assumptions \ref{ass:Environ} and \ref{ass:Integ}\ref{ass:Integ:q} hold, then for almost all $\omega\in\Omega$ the sequence of solutions $\ue\in\Heps$ to the problem 
\begin{align}
  \left(-\Laeps + \Reps V\right) \ue \;=\; f^\eps
  \label{eq:prop:twoscale-strong-eps}
\end{align}
satisfies $\Repsadj \ue\to u$ strongly in $L^{2}(Q)$, where $u\in H_{0}^{1}(Q)\cap H^{2}(Q)$ solves the limit problem
\begin{equation}
-\nabla\cdot\left(A_{\mathrm{hom}}\nabla u\right)+2Vu\;=\;2f\,,
\label{eq:prop:strong-limit-equation}
\end{equation}
almost everywhere in $Q$ with $A_{\hom}$ defined through \eqref{eq:Definition-A-hom}.
\end{prop}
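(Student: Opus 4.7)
\textbf{Plan for the proof of Proposition \ref{prop:twoscale-strong}.}
The strategy is to reuse the entire machinery developed for Theorem \ref{thm:twoscale-strong} and to treat $\Reps V$ as a compact, lower-order perturbation. The assumption $\liminf_{\eps\to 0}\lambda_1^\eps > 0$ plays the role of coercivity, substituting for the intrinsic coercivity that Poincar\'e alone provides in the purely elliptic case.

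\textbf{Step 1: uniform a priori bounds.} Testing \eqref{eq:prop:twoscale-strong-eps} against $\ue$ in $\Heps$ and invoking the variational characterization of $\lambda_1^\eps$, one obtains
\[
  \lambda_1^\eps \,\|\ue\|_{\Heps}^2
  \;\leq\;
  \langle (-\Laeps + \Reps V)\ue,\,\ue\rangle_{\Heps}
  \;=\;
  \langle f^\eps,\,\ue\rangle_{\Heps}.
\]
Since $\Repsadj f^\eps\weakto f$ in $L^2(Q)$, the sequence $\|f^\eps\|_{\Heps}$ is bounded, and the hypothesis on $\lambda_1^\eps$ yields $\|\ue\|_{\Heps}\leq C$ uniformly in $\eps$. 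Subtracting the potential term produces a uniform bound on the Dirichlet form, i.e.\ on the discrete gradient of $\ue$, because
\[
  \mathcal{E}^\eps(\ue,\ue)
  \;\leq\;
  \langle f^\eps,\ue\rangle_{\Heps}
  + \|V\|_\infty\,\|\ue\|_{\Heps}^2
  \;\leq\;
  C'.
\]

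\textbf{Step 2: compactness and extraction of two-scale limits.} Applying the compact embedding of Section \ref{sec:Compactness}, a subsequence satisfies $\Repsadj\ue\to u$ strongly in $L^2(Q)$ with $u\in H_0^1(Q)$, and the discrete gradient $\nabla^\eps \ue$ two-scale converges to $\nabla u + \chi$ with $\chi\in\Lpot$, exactly as in the proof of Theorem \ref{thm:twoscale-strong}. At this point the weak formulation of \eqref{eq:prop:twoscale-strong-eps} against test vectors of the form $\Reps\varphi$ with $\varphi\in C_c^\infty(Q)$ can be split into three pieces: the symmetric Dirichlet-form term (handled verbatim by Theorem \ref{thm:twoscale-strong}), the potential term, and the source term.

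\textbf{Step 3: passage to the limit in the potential term.} This is the only genuinely new ingredient. Writing
\[
  \langle \Reps V\,\ue,\,\Reps\varphi\rangle_{\Heps}
  \;=\;
  \int_Q (\Repsadj \Reps V)\,(\Repsadj \ue)\,(\Repsadj \Reps \varphi)\,\d x,
\]
we use that $\Repsadj \Reps V\to V$ pointwise (hence in $L^\infty_{\rm loc}$ sense by continuity of $V$), that $\Repsadj \Reps \varphi\to\varphi$ in $L^2(Q)$, and crucially the strong convergence $\Repsadj \ue\to u$ in $L^2(Q)$ from Step 2. Combining these, the potential term converges to $\int_Q Vu\,\varphi\,\d x$. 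The other two terms converge as in Theorem \ref{thm:twoscale-strong}, producing the factor $2$ in \eqref{eq:prop:strong-limit-equation}. Arguing as in Lemma \ref{lem:non-degeneracy} and the end of Section \ref{ref:ProofPoisson}, one eliminates the corrector $\chi$ and identifies the effective matrix $A_{\hom}$.

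\textbf{Main obstacle and closure.} The principal subtlety is ensuring that the \emph{whole} sequence (not merely a subsequence) converges, which requires uniqueness for the limit problem \eqref{eq:prop:strong-limit-equation}. Uniqueness follows from the strict positive-definiteness of $A_{\hom}$ together with the fact that, by lower semicontinuity of the principal eigenvalue under two-scale convergence, the principal Dirichlet eigenvalue $\lambda_1^0$ of $-\Lz + 2V$ on $Q$ satisfies $\lambda_1^0 \geq 2\liminf_{\eps\to 0}\lambda_1^\eps>0$; hence $-\Lz+2V$ is invertible on $H_0^1(Q)\cap H^2(Q)$. Because the limit $u$ is thus uniquely determined by $f$, every convergent subsequence has the same limit and the full sequence $\Repsadj \ue$ converges strongly in $L^2(Q)$ to $u$, as claimed.
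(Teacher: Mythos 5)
Your Steps 1 and 2 match the paper in spirit: the a priori estimate from testing against $\ue$ and invoking $\liminf_\eps\lambda_1^\eps>0$, and then the compact embedding of Lemma~\ref{lem:precompact}, are exactly what the paper does. The genuine difference appears in Step~3, and there your plan has a gap.

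The paper closes the argument by rewriting \eqref{eq:prop:twoscale-strong-eps} as $-\Laeps\ue = g^\eps$ with $g^\eps := f^\eps - \Reps V\ue$, observing (using the strong $L^2$ convergence of $\Repsadj\ue$ from Step~2 together with the boundedness and continuity of $V$) that $\Repsadj g^\eps\weakto f-Vu$ weakly in $L^2(Q)$, and then invoking Theorem~\ref{thm:twoscale-strong} as a black box with this modified source. This is not merely a cosmetic shortcut: Theorem~\ref{thm:twoscale-strong} only requires weak $L^2$ convergence of the source, because its proof first establishes Lemma~\ref{lem:twoscale-weak} under the additional hypothesis $\sup_\eps\|f^\eps\|_\infty<\infty$ (needed for the Moser iteration to yield $\sup_\eps\|\ue\|_\infty<\infty$, which Lemma~\ref{lem:two-scale-gradient} in turn requires) and then removes that hypothesis by a duality argument via the operators $\Beps$, $\Bz$. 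Your Step~2, by contrast, directly asserts the two-scale convergence of $\nabla^\eps\ue$ ``exactly as in the proof of Theorem~\ref{thm:twoscale-strong}'', and your Step~3 then passes to the limit in the weak formulation tested against $\Reps\varphi$. This route requires $\sup_\eps\|\ue\|_\infty<\infty$, which is simply not available under the hypotheses of the Proposition: $\Repsadj f^\eps$ is only assumed to converge weakly in $L^2(Q)$, so you cannot feed $\|f^\eps\|_{\ell^\infty}$ (nor, a fortiori, $\|f^\eps-\Reps V\ue\|_{\ell^\infty}$) into Proposition~\ref{prop:linftyPoiss}. To repair this, you would either have to first restrict to $\ell^\infty$-bounded sources and then reproduce the duality extension yourself, or — much cleaner — invoke Theorem~\ref{thm:twoscale-strong} directly as the paper does; then the explicit manipulation of the potential term in your Step~3, while correct, becomes redundant, since it is already absorbed into the weak convergence $\Repsadj g^\eps\weakto f-Vu$.

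Your closing paragraph on uniqueness is also off-target. The inequality $\lambda_1^0\geq 2\liminf_\eps\lambda_1^\eps$ (including the factor $2$) is not established anywhere in the paper, and ``lower semicontinuity of the principal eigenvalue under two-scale convergence'' is not a result you have at your disposal before Theorem~\ref{thm:spectrum} — which is proved \emph{from} this Proposition. In the paper, the issue is side-stepped because Theorem~\ref{thm:spectrum} first reduces to $V\geq 0$, for which the limit operator is manifestly coercive and uniqueness is immediate. If you want to argue uniqueness for a general bounded $V$ with $\liminf_\eps\lambda_1^\eps>0$, you would need a separate argument; appealing to an unproven semicontinuity statement does not close the loop.
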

We prove this proposition in Section \ref{sec:ProofSpectrum}.
Note that under Assumption \ref{ass:Integ}\ref{ass:Integ:q}, the condition $V\geq 0$ is sufficient for $\liminf_{\eps\to0}\lambda_1^\eps>0$.

By virtue of \cite[Lemma 11.3, Theorem 11.5]{JKO1994}, Proposition \ref{prop:twoscale-strong} implies the following result, see Section \ref{sec:ProofSpectrum}.
Note that for the spectral result we can drop the assumption $\liminf_{\eps\to0}\lambda_1^\eps>0$ as we explain in Section \ref{sec:ProofSpectrum}.
\begin{thm}
	\label{thm:spectrum}
	Let $V\colon \Rd \to \R$ be a bounded, continuous potential and let $k\in\N$.
	If Assumptions \ref{ass:Environ} and \ref{ass:Integ}\ref{ass:Integ:q} hold, then
	\begin{align}
		\lambda_k^\eps \to \lambda_k^0 \qquad \text{\Pas as } \eps\to 0\,.
	\end{align}
	Further, the following statements are true:
	\begin{enumerate}[label={(\roman*)},ref={\thethm~(\roman*)}]
	 \item\label{cor:spectrum:subseq} Let $k\in\N$ and let $\eps_m$ be a null sequence.
	 Then there \Pas exists a family $\{ \psi_j^0 \}_{1\leq j\leq k}$ of eigenvectors of the operator $-\Lz + V$ and a subsequence, still indexed by $\eps_{m}$, along which the vector
	\begin{align*}
	  \left( \mathcal{R}^\ast_{\eps_m}\psi_1^{\eps_m}, \ldots, \mathcal{R}^\ast_{\eps_m}\psi_k^{\eps_m} \right) \to \left( \psi_1^{0}, \ldots, \psi_k^{0} \right)\qquad \text{strongly in }L^2(Q)\, .
	\end{align*}
	\item\label{cor:spectrum:lincom} On the other hand, if the multiplicity of $\lambda_k^0$ is equal to $s$, i.e.,
	\begin{align*}
		\lambda_{k-1}^0 < \lambda_k^0 = \ldots = \lambda_{k+s-1}^0 < \lambda_{k+1}^0 \qquad (\text{with } \lambda_0^0 <\lambda_1^0\text{ arbitrary})\, ,
	\end{align*}
	then there \Pas exists a sequence $\psi^\eps \in \Heps$ such that
	\begin{align}
		\lim_{\eps\to 0} \| \psi^\eps - \Reps \psi_k^0 \|_{\Heps} = 0\, ,
	\end{align}
	where $\psi^\eps$ is a linear combination of the eigenfunctions of the operator $-\Laeps+\Reps V$
	corresponding to the eigenvalues $\lambda_k^\eps, \ldots, \lambda_{k+s-1}^\eps$.
	\end{enumerate}
\end{thm}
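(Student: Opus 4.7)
The plan is to reduce Theorem~\ref{thm:spectrum} to the abstract spectral convergence framework in Chapter~11 of \cite{JKO1994}, using Proposition~\ref{prop:twoscale-strong} as the essential analytic input. First I would remove the hypothesis $\liminf_{\eps\to 0}\lambda_1^\eps>0$ that is present in Proposition~\ref{prop:twoscale-strong}. For any constant $C\in\R$, the spectrum of $-\Laeps+\Reps(V+C)$ is obtained from that of $-\Laeps+\Reps V$ by shifting all eigenvalues by $C$, and the same holds for $-\Lz+V$; eigenfunctions are unchanged. Choosing $C$ large enough that $V+C\geq 1$, say, both $-\Laeps+\Reps(V+C)$ and $-\Lz+(V+C)$ become uniformly positive operators, so the hypothesis $\liminf_{\eps\to 0}\lambda_1^\eps>0$ is automatic for the shifted problem. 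It therefore suffices to prove the theorem under this extra assumption, which I assume henceforth.

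Next I would encode Proposition~\ref{prop:twoscale-strong} as compact convergence of resolvents. Define $T^\eps\colon\Heps\to\Heps$ and $T^0\colon\Hz\to\Hz$ by $T^\eps f^\eps\ldef u^\eps$, where $u^\eps$ solves \eqref{eq:prop:twoscale-strong-eps}, and $T^0 f\ldef u$, where $u$ solves \eqref{eq:prop:strong-limit-equation}. Both operators are non-negative, self-adjoint, and compact: $T^\eps$ because $\Heps$ is finite-dimensional, $T^0$ because the standard elliptic estimate $\|u\|_{H^1(Q)}\lesssim\|f\|_{L^2(Q)}$ combined with the Rellich--Kondrachov embedding makes $T^0$ compact on $L^2(Q)$. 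Their nonzero eigenvalues are precisely the reciprocals $1/\lambda_k^\eps$ and $1/\lambda_k^0$ (up to the factor $2$ that has been absorbed into the rescaling in \eqref{eq:prop:strong-limit-equation}). The content of Proposition~\ref{prop:twoscale-strong} is now exactly the statement that, along a.e.\ $\omega$, whenever $\Repsadj f^\eps\weakto f$ weakly in $L^2(Q)$ one has $\Repsadj T^\eps f^\eps\to T^0 f$ strongly in $L^2(Q)$.

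With this in hand, I would verify the abstract hypotheses of \cite[Lemma~11.3, Theorem~11.5]{JKO1994}: the connecting operators $\Reps,\Repsadj$ are uniformly bounded with $\|\Reps u\|_{\Heps}\to\|u\|_{\Hz}$ for every fixed $u\in\Hz$ (which follows from \eqref{equ:Reps}--\eqref{equ:Repsadj} by dominated convergence), the operators $T^\eps$ and $T^0$ are non-negative, compact and self-adjoint, and the compact convergence property stated in the previous paragraph holds. Theorem~11.5 of \cite{JKO1994} then yields $1/\lambda_k^\eps\to 1/\lambda_k^0$, hence $\lambda_k^\eps\to\lambda_k^0$, together with convergence of spectral projectors. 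Assertion~\ref{cor:spectrum:subseq} is the standard extraction of a subsequence of the orthonormal eigenbases along which each $\mathcal{R}^\ast_{\eps_m}\psi_j^{\eps_m}$ converges strongly in $L^2(Q)$, using that the family $(\mathcal{R}^\ast_{\eps_m}\psi_j^{\eps_m})_{m,j}$ is precompact by the compact convergence of $T^{\eps_m}$ and that the limits automatically form an orthonormal system of eigenvectors at eigenvalue $\lambda_j^0$. Assertion~\ref{cor:spectrum:lincom} is the dual statement: given $\psi_k^0$ in the $s$-dimensional eigenspace of $\lambda_k^0$, the orthogonal projection of $\Reps\psi_k^0$ onto the span of the eigenvectors of $-\Laeps+\Reps V$ corresponding to $\lambda_k^\eps,\dots,\lambda_{k+s-1}^\eps$ approximates $\Reps\psi_k^0$ in $\Heps$, as encoded in \cite[Lemma~11.3]{JKO1994}.

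I expect no genuine obstacle at this stage: the hard analytic content sits entirely in Proposition~\ref{prop:twoscale-strong}, whose proof via two-scale convergence, Poincar\'e inequalities and Moser iteration is the real work of the paper. The only point that requires some care is the bookkeeping between the discrete orthonormality $\langle\psi_k^\eps,\psi_l^\eps\rangle_{\Heps}=\delta_{kl}$, its continuous counterpart for $\psi_k^0$, and the asymptotic isometry property $\|\Reps u\|_{\Heps}\to\|u\|_{\Hz}$, which is what guarantees that limits of the discrete orthonormal eigenbases remain orthonormal in $L^2(Q)$ and thus produce genuine eigenvectors of $-\Lz+V$ rather than degenerate limits. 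Finally, undoing the shift $V\mapsto V+C$ restores the theorem as stated.
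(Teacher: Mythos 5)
Your proposal is essentially the same argument as the paper's: both reduce to the abstract spectral-convergence framework of Chapter 11 of \cite{JKO1994} by first shifting $V$ by a constant to restore $V\geq0$ (hence $\lambda_1^\eps>0$ via \eqref{eq:Laeps-strictly-positive-def}), and then use Proposition~\ref{prop:twoscale-strong} to verify the compact-resolvent-convergence hypotheses (what the paper packages as Lemmas~\ref{lem:CondJKO} and~\ref{lem:CondJKOii2}, Conditions I--IV of \cite[Section~11.1]{JKO1994}); the paper then cites \cite[Theorems~11.4, 11.5]{JKO1994} exactly as you do. The only cosmetic difference is that you spell out the verification of the JKO hypotheses inline rather than isolating them in separate lemmas.
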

Note that Biskup, Fukushima and K\"onig \cite{BFK16} proved a spectral homogenization theorem for a random bounded potential and the standard lattice Laplacian.
They later extended their result to unbounded potentials in \cite{BFK17}.

\setcounter{wraprem}{\value{thm}}
\setlength\columnsep{20pt}
\begin{wraprem}
  \label{rem:optimality}
  Let us discuss in what sense Assumption \ref{ass:Integ} is optimal for the result of Theorem \ref{thm:spectrum} with $V=0$.
  Since the principal Dirichlet eigenvalue has the variational representation 
  \begin{align*}
    \lambda_1^\eps \;=\; \inf \left\{ \langle \ue, -\Laeps \ue \rangle_{\Heps} \;\colon\, \ue\in\Heps \text{ and } \|\ue\|_{\Heps}=1 \right\}
  \end{align*}
  (also known as the Rayleigh-Ritz formula, or the Courant-Fischer theorem), it is necessary for spectral homogenization that \Pas there exists $C<\infty$ such that
  \begin{align}
    \| \ue \|_{\Heps}^2 \;\leq\; C\langle \ue, -\Laeps \ue \rangle_{\Heps}\qquad\text{for all $\ue\in\Heps$}
    \label{equ:Poincare:rem:optimality}
  \end{align}
  and for all $\eps>0$ (uniform Poincar\'e inequality).
  
  \begin{wrapfigure}{r}{3cm}
    \includegraphics[width=\linewidth]{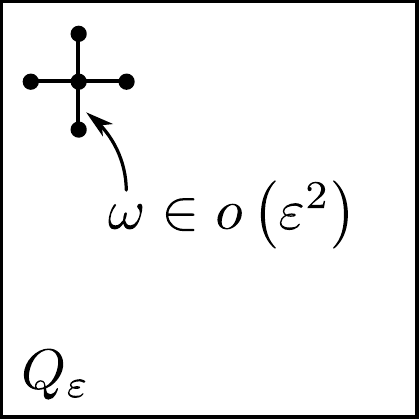}
    \caption{\\Variable-speed trap in $d\geq 2$.\label{fig:trap}}
    \vspace{-.2cm}
  \end{wrapfigure}
  
  If we assume that \Pas only nearest-neighbor connections carry a positive conductance, i.e., $\cO (\w) = E_d$, then Assumption \ref{ass:Integ} is optimal for the uniform Poincar\'e inequality up to the critical case $\sup\{ r\colon \E [\w(e)^{-r}]<\infty \}=q_{\rm c}$ (cf. \eqref{equ:LowerMomentIntro}).
  This means that if $\sup\{ r\colon \E [\w(e)^{-r}]<\infty \}<q_{\rm c}$, then it is possible to construct an environment where the uniform Poincar\'e inequality does not hold as $\eps$ tends to zero.
  
  For $d\geq 2$, this is due to trapping structures as in Figure \ref{fig:trap} where $\ue$ can concentrate its entire mass, see e.g.\ \cite[Section 1.4]{Flegel2016}.
  The construction of stationary, ergodic environments with such trapping structures is analogous to the one of a trap for the constant-speed random walk in \cite[Theorem 5.4]{ADS16}.
  In the i.i.d.\ case and if $\sup\{ r\colon \E [\w(e)^{-r}]<\infty \}<1/4$, the traps occur even \Pas for $\eps$ small enough and the principal Dirichlet eigenvector localizes \Pas in a single site \cite[Theorem 1.8]{Flegel2016}.
  
  In $d=1$ and if $\sup\{ r\colon \E [\w(e)^{-r}]<\infty \}<1$, even an i.i.d.\ environment contradicts the uniform Poincar\'e inequality:
  By a Borel Cantelli argument we can show that \Pas for $\eps$ small enough there exist edges $e_1=\{x_1,y_1\}$ and $e_2=\{x_2, y_2 \}$ such that $x_1\in (-\eps^{-1},-\eps^{-1}/2)\cap \Z$ and $x_2\in(\eps^{-1}/2,\eps^{-1})\cap \Z$, respectively, and such that both $\w(e_1)$ and $\w(e_2)$ decay much faster than $\eps$.
  When we insert a function $\ue\in\Heps$ into \eqref{equ:Poincare:rem:optimality} that is 1 on the interval $[\max (\eps x_1,\eps y_1), \min (\eps x_2,\eps y_2)]$ and zero otherwise, then we see that $C$ diverges as $\eps$ tends to zero, which is a contradiction to a uniform Poincar\'e inequality.
  
  If we assume that $\cO (\w)$ is \Pas strictly larger that $E_d$ but contains only connections of bounded length, an analogous construction as in \cite[Theorem 5.4]{ADS16} shows that $q_{\rm c}=d/2$ is still optimal in the general stationary ergodic case with $d\geq 2$.  
  For independent conductances however, $q_{\rm c}$ decreases when the upper bound for the length of the connections increases, see also Remark \ref{rem:iid}.
  On the other hand, if we assume that $\cO (\w)$ contains connections of unbounded length, all the suggested counterexamples fail and the question about the optimal conditions requires further research.
\end{wraprem}
\setcounter{thm}{\value{wraprem}}

\subsection{Local times of the random walk among random conductances}
For a fixed realization $\w$ of the environment, we consider the Markov process ${(X_t\,:\;t \geq 0})$ on $\Z^d$, which jumps with rate $\w_{x,z}$ from a site $x$ to the site $x+z$.  Since the holding times are site-dependent, this Markov process is called the variable-speed random walk among random conductances (see \cite{Biskup2011review} for a review).  Its generator $\La$ is given by \eqref{equ:DefGenerator}.
We let $\Prw^\w_x$ denote the law of a random walk that starts in site $x$ at time zero.

Our main motivation for this paper is to prove a quenched large deviation principle (LDP) for the \emph{occupation time measures} or \emph{local times}
\begin{align}
  l_t (z) \;\ldef\; \int_0^t \delta_{X_s} (z)\, \d s\qquad (z\in\Zd, t>0)
\end{align}
of the random walk among random conductances, given that the random walk stays in a certain growing region of the lattice.
More precisely, we define a spatial scaling $\alpha_t$ with $1 \ll \alpha_t \ll \sqrt{t}$ and consider the rescaled local times
\begin{align}
  L_t (z)
  \;\ldef\;
  \frac{\alpha_t^d}{t}\, l_t (\lfloor \alpha_t z \rfloor) 
  \qquad (z\in\Rd, t>0)\, .
\end{align}
Further, let $Q=(-1,1)^d$ and define $Q_t = \alpha_t Q \cap \Zd$.
In \cite[Theorem 1.8]{KoenigWolff}, the authors prove a quenched large deviation principle for the function $L_t$ given that $\supp (l_t) \subset Q_t$ and under the assumption that the conductances are i.i.d.\ and uniformly elliptic.
Our aim is to generalize this result to stationary and ergodic conductances and replace the uniform ellipticity condition by a suitable moment condition.

Let us recall some facts about the local times of the simple random walk.
We define the set
\begin{align}
  \mathcal{F} \;=\; \left\{ f^2 \colon f \in L^2 (Q), \| f \|_2 =1 \right\}
\end{align}
and equip $\mathcal{F}$ with the weak topology of integrals against bounded continuous functions $V\colon Q \to \R$.
Notice that on the event $\{ \supp (l_t) \subset Q_t \}$ the function $L_t$ is an element of the set $\mathcal{F}$ and an $L^1$-normalized random step function on $\Rd$.

In the case of a simple random walk, i.e., when $\w_{x,z}\equiv 1$, it is known that on the event $\{ \supp (l_t) \subset Q_t \}$ the function $L_t$ satisfies a large deviation principle on $\mathcal{F}$ with scale $t \alpha_t^{-2}$ and rate function $I_0 = I^{\rm SRW} - \inf_{\mathcal{F}} I^{\rm SRW}$, where
\begin{align}
  I^{\text{SRW}} (f)
  \;=\;
  \begin{cases}
    \sum_{i=1}^d \int_Q \left( \del_i f(y) \right)^2 \d y = \| \nabla f \|_2^2,
    &f\in H^1_0 (Q)\, ,
    \\
    \infty\, , &\text{else,}
  \end{cases}
\end{align}
see \cite{KoenigWolff} for further explanation and \cite{GKS07}.
We prove that under quite general conditions, this is also true for the random conductance model, see Proposition \ref{prop:LDP} and Corollary \ref{cor:LDP}.
For general stationary and ergodic conductances, however, the resulting rate function reads
\begin{align}\label{equ:DefRateFct}
  I_0 = I - \inf_{\mathcal{F}} I\qquad\text{where }
  I (f)
  \;=\;
  \begin{cases}
    \int_Q (\nabla f) \cdot A_{\rm hom} \nabla f\, , &f\in H^1_0 (Q),
    \\
    \infty\, , &\text{else,}
  \end{cases}
\end{align}
and the matrix $A_{\rm hom}\in \R^d\times \R^d$ is defined as in \eqref{eq:Definition-A-hom}.
\begin{ass}[Heat kernel lower bounds] \label{ass:heatkernel}
  There exists $c>0$ such that \Pas for $t$ large enough
  \begin{align}
    \Prw^\w_0 \left[ X_{t} =x \right] \;\geq\; ct^{-d/2}
  \end{align}
  for all $x \in \Zd$ with $|x| \leq \sqrt{t}$.
\end{ass}
\begin{prop}\label{prop:LDP}
Let Assumptions \ref{ass:Environ}, \ref{ass:Integ}\ref{ass:Integ:q} and \ref{ass:heatkernel} be fulfilled.
Then \Pas the rescaled local times $L_t$ satisfy a large deviation principle with respect to the weak topology of integrals against bounded continuous functions $V\colon Q \to \R$ under $\mathrm{P}_0^\w \left[\,\cdot \mid \supp( l_t ) \subset \alpha_t Q \right]$ on $\mathcal{F}$.
The scale is $t \alpha_t^{-2}$ and the rate function $I_0$ is defined in \eqref{equ:DefRateFct}.
\end{prop}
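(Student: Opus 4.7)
The plan is to adapt the classical Donsker--Varadhan strategy for the LDP of occupation-time measures of a reversible Markov process, following the blueprint of \cite{KoenigWolff, GKS07}. The role played there by uniform ellipticity is taken over in our setting by the spectral homogenization Theorem \ref{thm:spectrum}, which delivers convergence of the perturbed Dirichlet spectrum of $-\Laeps + \Reps V$ to that of $-\Lz + V$, together with the heat kernel lower bound of Assumption \ref{ass:heatkernel}.

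First I would compute the logarithmic moment generating function under the conditioned measure. Set $\eps = 1/\alpha_t$ and $T = t/\alpha_t^2$. Since the diffusively rescaled walk $Y_\tau = \eps X_{\tau/\eps^2}$ has generator $\Laeps$, a change of variables identifies $(t/\alpha_t^2)\int_Q V\,dL_t$ with $\int_0^T V(Y_\tau)\,d\tau$ up to boundary corrections, and the event $\{\supp l_t \subset \alpha_t Q\}$ with $\{Y_\tau \in Q \text{ for all } \tau \leq T\}$. By Feynman--Kac and standard spectral theory on $Q_\eps$,
\begin{equation*}
  \Erw^\w_0\!\left[\exp\!\left(\tfrac{t}{\alpha_t^2}\!\int_Q V\,dL_t\right)\mathds{1}_{\{\supp l_t\subset \alpha_t Q\}}\right]
  \;=\; e^{-T\lambda_1^\eps(-V)(1+o_T(1))},
\end{equation*}
where $\lambda_1^\eps(-V)$ denotes the principal Dirichlet eigenvalue of $-\Laeps - \Reps V$ on $Q_\eps$. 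Normalising by $\Prw^\w_0[\supp l_t\subset \alpha_t Q] = e^{-T\lambda_1^\eps(0)(1+o_T(1))}$ and passing to the joint limit $t\to\infty$ (so $\eps\to 0$ and $T\to\infty$), Theorem \ref{thm:spectrum} applied with potential $-V$ gives
\begin{equation*}
  \lim_{t\to\infty}\tfrac{\alpha_t^2}{t}\log \Erw^\w_0\!\left[e^{(t/\alpha_t^2)\int_Q V\,dL_t}\,\middle|\,\supp l_t\subset \alpha_t Q\right]
  \;=\; \lambda_1^0(0) - \lambda_1^0(-V).
\end{equation*}
By the Rayleigh--Ritz variational formula for $-\Lz = -\nabla\cdot(A_{\hom}\nabla\cdot)$, the right-hand side is exactly the Legendre transform of $I_0$ on $\mathcal{F}$, since $\inf_{\mathcal{F}} I = \lambda_1^0(0)$.

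Next I would promote this convergence of the log-moment generating function to a full LDP. The upper bound and exponential tightness follow from the G\"artner--Ellis theorem, where tightness in the weak topology on $\mathcal{F}$ is checked by testing with potentials that penalise mass accumulating near $\partial Q$ and invoking the uniform spectral bounds supplied by Theorem \ref{thm:spectrum}. For the matching lower bound I would use an explicit tilting argument: for a target profile $\phi = f^2 \in \mathcal{F}$ with $I(f)<\infty$, choose a bounded continuous potential $V_f$ whose normalised principal eigenfunction of $-\Lz + V_f$ is $f$; by Theorem \ref{thm:spectrum}\ref{cor:spectrum:lincom}, the corresponding eigenvectors $\psi_1^\eps$ of $-\Laeps + \Reps V_f$ satisfy $\Reps\psi_1^0 \approx \psi_1^\eps$, so a Doob $h$-transform of the rescaled dynamics realises $L_t \approx f^2$ with the correct exponential cost. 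The heat kernel lower bound of Assumption \ref{ass:heatkernel} enters at this stage to ensure that the random walk spreads out diffusively over the support of $\psi_1^0$ within time $T$, so that the tilted dynamics indeed produces configurations with the required occupation density.

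The main obstacle is the simultaneous handling of the two limits $T\to\infty$ and $\eps\to 0$. Theorem \ref{thm:spectrum} provides eigenvalue convergence for fixed $k$ as $\eps\to 0$, while the Feynman--Kac asymptotics are naturally formulated as $T\to\infty$ at fixed $\eps$. Making these limits compatible requires $\w$-uniform control on the spectral gap between $\lambda_1^\eps$ and $\lambda_2^\eps$ and on the error in the semigroup asymptotics, both of which are provided by the heat kernel lower bound of Assumption \ref{ass:heatkernel} --- this is exactly the replacement for the uniform-ellipticity regularity exploited in \cite{KoenigWolff, GKS07}.
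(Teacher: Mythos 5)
Your overall framework matches the paper's: reduce to convergence of the log-moment generating function via Feynman--Kac and the Dirichlet spectrum of $-\Laeps + \Reps V$, apply Theorem \ref{thm:spectrum} to pass $\lambda_1^\eps \to \lambda_1^0$, and conclude with G\"artner--Ellis. Up to a harmless sign convention (you use potential $-V$ where the paper puts a minus in the exponent and uses $V$), the first half of your argument is exactly the paper's.

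However, the second half contains a genuine gap and an unnecessary detour. First, the detour: once the limit $\Lambda(V)$ of the cumulant generating function is established for all bounded continuous $V$, the G\"artner--Ellis theorem already supplies \emph{both} the upper and lower LDP bounds, as K\"onig--Wolff exploit. Your proposed separate lower-bound construction via a Doob $h$-transform and an explicit tilt to a target profile $f^2$ is not needed, and building it would require additional uniform control of the principal eigenfunction $\psi_1^\eps$ in $\ell^\infty$ that neither Theorem \ref{thm:spectrum}\ref{cor:spectrum:lincom} (which gives only $\ell^2$ convergence) nor Assumption \ref{ass:heatkernel} obviously provides.

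Second, and more importantly, you misidentify the technical role of Assumption \ref{ass:heatkernel}. You claim it delivers ``$\w$-uniform control on the spectral gap between $\lambda_1^\eps$ and $\lambda_2^\eps$.'' It does not, and no such spectral-gap control is needed. The actual obstruction in the Feynman--Kac step is to control the prefactor in the two-sided semigroup estimate
\begin{equation*}
  \psi_1^{(t)}(0)\, e^{-T\lambda_1^{(t)}(V)} \;\leq\; \bigl(\Pavt\mathds{1}\bigr)(0) \;\leq\; \sqrt{|\alpha_t Q|}\, e^{-T\lambda_1^{(t)}(V)},
\end{equation*}
which holds with no reference to the spectral gap (the upper bound by Cauchy--Schwarz and Parseval, the lower bound because $\Pavt$ is positivity preserving and $\psi_1^{(t)}$ is an eigenfunction). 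Since $\alpha_t^2/t\to 0$, the factor $\sqrt{|\alpha_t Q|}$ is harmless, and the only issue is to ensure $\psi_1^{(t)}(0)$ does not decay super-polynomially. This is precisely where the heat-kernel lower bound of Assumption \ref{ass:heatkernel} enters: one writes $\psi_1^{(t)}(0)\geq e^{-\lambda-V^\ast}\bigl(\max\psi_1^{(t)}\bigr)\min_x \Prw^{\alpha_t^2\w}_0[X_1=x]$ and uses the pointwise heat-kernel lower bound to bound the last factor below polynomially. Without identifying this mechanism, the step ``$(\Pavt\mathds{1})(0)=e^{-T\lambda_1^\eps(1+o_T(1))}$'' asserted in your proposal is unsubstantiated, and the claimed replacement role of Assumption \ref{ass:heatkernel} is not established.
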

We prove this proposition in Section \ref{sec:ProofPropLDP} as a consequence of Theorem \ref{thm:spectrum}.

In the special case where only nearest-neighbor conductances are positive, Proposition \ref{prop:LDP} together with the heat kernel bounds of \cite[Proposition 4.7]{ADS16} respectively, implies the following corollary.
\begin{cor}
  \label{cor:LDP}
  Let the conductances be stationary and ergodic with law $\P$ and let \Pas $\cO(\omega) = E_d$.  For $p, q \in [1, \infty]$ satisfying $1/p+1/q < 2/d$ assume that $\E[\w(e)^p] < \infty$ and $\E[\w(e)^{-q}] < \infty$ for any $e \in E_d$.
  Then the large deviation principle from Proposition \ref{prop:LDP} holds.
\end{cor}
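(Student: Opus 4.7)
The plan is to derive Corollary \ref{cor:LDP} as an immediate consequence of Proposition \ref{prop:LDP} by verifying each of its three hypotheses---Assumptions \ref{ass:Environ}, \ref{ass:Integ}\ref{ass:Integ:q} and \ref{ass:heatkernel}---directly from the stated moment bounds $\E[\w(e)^p]<\infty$ and $\E[\w(e)^{-q}]<\infty$ with $1/p+1/q<2/d$. Because the statement of the corollary itself instructs us to invoke the on-diagonal heat kernel lower bound from \cite[Proposition 4.7]{ADS16}, the only genuine content is to translate the two-sided moment condition into the structural moment condition on $\nu_l^\w(0)$ that appears in Assumption \ref{ass:Integ}\ref{ass:Integ:q}.

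First I would dispatch Assumption \ref{ass:Environ}. Part \ref{ass:erg} is part of the hypothesis. For part \ref{ass:momcond}, the standing assumption $\cO(\w)=E_d$ reduces the sum $\sum_{z\in\Zd}\w_{0,z}|z|^2$ to $\sum_{|z|=1}\w_{0,z}$ \Pas, whose expectation is bounded by $2d\,\E[\w(e)]\leq 2d\,(\E[\w(e)^p])^{1/p}<\infty$ since $p\geq 1$. Part \ref{ass:FullLattice} is immediate from $\cO(\w)=E_d$.

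Next I would verify Assumption \ref{ass:Integ}\ref{ass:Integ:q}. The key observation is that for $p,q\in[1,\infty]$ the constraint $1/p+1/q<2/d$ forces $q>d/2$: one rearranges to $1/q<2/d-1/p\leq 2/d$. I then take $l=1$; the only path in $(\Zd,E_d)$ of length at most one between the two endpoints of an edge $e\in E_d$ is $e$ itself, so $\Gamma_1(e)=\{e\}$ and $\w_1(e)=\w(e)$. Consequently $\nu_1^{\w}(0)=\sum_{e\in E_d:\,0\in e}\w(e)^{-1}$ is a sum of $2d$ nonnegative terms, and convexity of $x\mapsto x^q$ gives
\begin{align*}
  \E\bigl[(\nu_1^\w(0))^q\bigr]\;\leq\;(2d)^{q-1}\sum_{e\in E_d:\,0\in e}\E[\w(e)^{-q}]\;<\;\infty,
\end{align*}
and $q>d/2$, as required. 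When $d=1$ the corresponding hypothesis reduces to $\E[1/\w(e)]<\infty$, which follows from $\E[\w(e)^{-q}]<\infty$ with $q\geq 1$ by Jensen's inequality.

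Finally, Assumption \ref{ass:heatkernel} is precisely what \cite[Proposition 4.7]{ADS16} supplies under the same moment condition $1/p+1/q<2/d$: it furnishes, \Pas for sufficiently large $t$, the bound $\Prw^\w_0[X_t=x]\geq c\,t^{-d/2}$ uniformly in $|x|\leq\sqrt{t}$. Feeding all three verified assumptions into Proposition \ref{prop:LDP} yields the desired quenched LDP for $L_t$ under $\Prw_0^\w[\,\cdot\mid \supp(l_t)\subset \alpha_t Q]$ with rate function $I_0$. There is no real obstacle here: the combinatorial reduction to $l=1$ is trivial because $E_d$ is already nearest-neighbor, and all nontrivial analytic content is imported wholesale from Proposition \ref{prop:LDP} and \cite{ADS16}.
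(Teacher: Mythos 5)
Your proof is correct and matches the approach the paper intends (the paper itself gives no separate proof of the corollary beyond the one-sentence pointer to Proposition \ref{prop:LDP} and \cite[Proposition 4.7]{ADS16}). Your verification of Assumptions \ref{ass:Environ} and \ref{ass:Integ}\ref{ass:Integ:q} via $l=1$, $\w_1(e)=\w(e)$, the convexity bound $\bigl(\sum_{i=1}^{2d}a_i\bigr)^q\le (2d)^{q-1}\sum a_i^q$, and the observation that $1/p+1/q<2/d$ with $1/p\ge 0$ forces $q>d/2$, is exactly the content that the authors leave implicit.
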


\section{Inequalities}
\label{sec:inequalities}
In analogy to the definition of $\ell^p_\eps$ in \eqref{equ:DefEllpeps}, we define the following space-averaged norms for functions $f\colon\, \Zd\to\R$.
Let $A\subseteq \Zd$ be a non-empty set and $p\in[1,\infty)$. Then
\begin{align}
  \Norm{f}{p,A}
  \;\ldef\;
  \bigg(
    \frac{1}{|A|}\; \sum_{x \in A}\, |f(x)|^p
  \bigg)^{\!\!1/p}
  \qquad \text{and} \qquad
  \Norm{f}{\infty, A} \;\ldef\; \max_{x\in A} |f(x)|\, ,
\end{align}
where $|A|$ is the counting measure on $A$. Moreover, we let
\begin{align}
  (f)_A \;\ldef\; |A|^{-1} \sum_{x\in A} f(x)
  \label{equ:DefAverage}
\end{align}
abbreviate the average of $f$ over the set $A$.

\subsection{Poincar{\'e} and Sobolev inequalities}
\label{sec:PI}
The main objective in this subsection is to prove weighted Poincar\'e and Sobolev inequalities.
The Poincar\'e inequalities of Proposition \ref{prop:localPoincare} and \eqref{equ:PI} are the main tools in the proof of Lemma \ref{lem:precompact}, whereas the Sobolev inequality of Proposition \ref{prop:Sobolev} with $\rho>1$ ensures uniform $\ell^\infty$-bounds of the solution to the Poisson equation (see Section \ref{subsec:MaximalInequalities}).

Starting point for our further considerations is the fact that the underlying unweighted Euclidean lattice $(\Zd, E_d)$ satisfies the classical Sobolev inequality for any $d \geq 1$.
Let $B \subset \Zd$ be finite and connected and $u\!: \Zd \to \R$.  Then, 
\begin{align}\label{eq:Sobolev:d=1}
  \inf_{a \in \R}
  \Norm{u - a}{\infty, B}
  \;\leq\;
  C_1\, |B|^{1/d}\,
  \bigg(
    \frac{1}{|B|}\mspace{-6mu}
    \sum_{\substack{x, y \in B \\ \{x,y\} \in E_d}} \mspace{-8mu}
    \big| u(x) - u(y) \big|
  \bigg)
\end{align}
for $d=1$, whereas for any $d \geq 2$ and $\alpha \in [1, d)$ we have
\begin{align}\label{eq:Sobolev:Sa}
  \inf_{a \in \R}
  \Norm{u - a}{\frac{d \alpha}{d-\alpha}, B}
  \;\leq\;
  C_1\, |B|^{1/d}\,
  \bigg(
    \frac{1}{|B|}\mspace{-6mu}
    \sum_{\substack{x, y \in B \\ \{x, y\} \in E_d}} \mspace{-8mu}
    \big| u(x) - u(y) \big|^\alpha
  \bigg)^{\!\!1/\alpha}\mspace{-15mu}.
\end{align}
For $d \geq 2$ this Sobolev inequality follows from the isoperimetric inequality of the underlying Euclidean lattice, see e.g.\ \cite[Theorem~3.2.7]{Ku10}.
\begin{prop}[local Poincar\'e inequality]
  \label{prop:localPoincare}
  For any $x_0 \in \Zd$ and $n \geq 1$, let $B(n) \equiv B(x_0,n)\subset \Zd$.
  Suppose that $d=1$ and that $\nu^{\w} (x) <\infty$ for all $x\in\Zd$.
  Then, there exists $C_{\mathrm{PI}} < \infty$ such that
  \begin{align}\label{eq:Poincare:weighted:d=1}
    \Norm{u - (u)_{B(n)}}{2, B(n)}^2
    \;\leq\;
    C_{\mathrm{PI}}\, \Norm{\nu^{\w}}{1, B(n)}\, \frac{n^2}{|B(n)|}\mspace{-3mu}
    \sum_{x, y \in B(n)} \mspace{-6mu}
    \w(\{x,y\})\, \big| u(x) - u(y) \big|^2
  \end{align}
  for any $u\!:\Z \to \R$.
  
  Furthermore, for every $d\geq 2$ and $l \in [1,\infty)$ with $\nu_l^{\w} (x) <\infty$ for all $x\in\Zd$, there exist constants $C_{\mathrm{PI}} \equiv C_{\mathrm{PI}}(d, l) < \infty$ and $C_{\mathrm{W}} \equiv C_{\mathrm{W}}(l) < \infty$ with $C_{\mathrm{W}}(1)=1$ such that
  \begin{align}\label{eq:Poincare:weighted}
    &\Norm{u-(u)_{B(n)}}{2,B(n)}^2
    \nonumber\\[.5ex]
    &\mspace{36mu}\leq\;
    C_{\mathrm{PI}}\, \Norm{\nu_l^{\w}}{\frac{d}{2}, B(n)}\,
    \frac{n^2}{|B(n)|}\mspace{-3mu}
    \sum_{x, y \in B(C_{\mathrm{W}} n)} \mspace{-15mu}
    \w(\{x,y\})\, \big| u(x) - u(y) \big|^2,
  \end{align}
  for any $u\!:\Zd \to \R$, where the measure $\nu_l$ is given by \eqref{eq:def:nu} with suitable path sets $\Gamma_l$.
\end{prop}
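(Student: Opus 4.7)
The plan is to reduce the weighted Poincar\'e inequality to the unweighted Sobolev inequalities \eqref{eq:Sobolev:d=1}--\eqref{eq:Sobolev:Sa} via H\"older's inequality, and to pass from bare nearest-neighbor differences $|u(x)-u(y)|$ on the unweighted side to the weighted Dirichlet form $\sum \omega(e)|u(e_{+})-u(e_{-})|^{2}$ by a chain-rule argument along the optimal path $\gamma_{l}^{\rm opt}(e)$.

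For $d\geq 2$, I would apply \eqref{eq:Sobolev:Sa} with the exponent $\alpha=2d/(d+2)\in[1,d)$, which is precisely the value making $d\alpha/(d-\alpha)=2$. Since $(u)_{B(n)}$ is the $\ell^{2}$-minimizer, this yields
\begin{align*}
\Norm{u-(u)_{B(n)}}{2, B(n)}^{\alpha}
\;\leq\;
C\, n^{\alpha}\, \frac{1}{|B(n)|}\mspace{-9mu}\sum_{\substack{\{x,y\}\in E_{d}\\ x,y\in B(n)}}\mspace{-15mu}|u(x)-u(y)|^{\alpha}.
\end{align*}
For every nearest-neighbor edge $e=\{x,y\}$, Cauchy--Schwarz along $\gamma_{l}^{\rm opt}(e)$ gives
\begin{align*}
|u(x)-u(y)|^{2}\;\leq\;\omega_{l}(e)^{-1}\mspace{-12mu}\sum_{e'\in \gamma_{l}^{\rm opt}(e)}\mspace{-9mu}\omega(e')\,|u(e'_{+})-u(e'_{-})|^{2}.
\end{align*}
Raising to the $\alpha/2\leq 1$ power and then applying H\"older with dual exponents $(2/(2-\alpha), 2/\alpha)$ decouples the two factors; the numerical coincidence $\alpha/(2-\alpha)=d/2$ is the whole point of the choice of $\alpha$, as it makes $\omega_{l}^{-1}$ appear with exponent exactly $d/2$. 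Each edge $e'\in E_{d}$ lies in at most $O(l^{d})$ of the paths $\gamma_{l}^{\rm opt}(e)$ (only in those emanating from edges $e$ within graph distance $l$ of $e'$), so the double path-sum collapses, up to a constant $C(l)$, to a single Dirichlet sum over edges $e'\in E_{d}\cap B(n+l)\subseteq B(C_{\rm W}(l)\,n)$ for any $C_{\rm W}(l)\geq 1+l$. Finally, the elementary bound
\begin{align*}
\sum_{e'\in E_{d}\cap B}\omega_{l}(e')^{-d/2}\;\leq\;\sum_{x\in B}\nu_{l}^{\omega}(x)^{d/2}\;=\;|B|\,\Norm{\nu_{l}^{\omega}}{d/2,B}^{d/2}
\end{align*}
(from $\sum_{i}a_{i}^{p}\leq (\sum_{i}a_{i})^{p}$ with $p=d/2\geq 1$ applied per vertex) converts the edge-sum of $\omega_{l}^{-1}$ into $\Norm{\nu_{l}^{\omega}}{d/2,B(n)}$. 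Raising the whole estimate to the $2/\alpha=(d+2)/d$-th power and using $|B(n)|\asymp n^{d}$ makes the exponents collapse and produces \eqref{eq:Poincare:weighted}.

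For $d=1$ the argument is simpler: \eqref{eq:Sobolev:d=1} gives an $\ell^{\infty}$-bound on $u-a$, and a single Cauchy--Schwarz yields $\tfrac{1}{|B|}\sum|u(x)-u(y)|\leq |B|^{-1/2}(\sum\omega^{-1})^{1/2}(\sum\omega|u-u|^{2})^{1/2}$; combined with $\sum_{e\in E_{d}\cap B}\omega(e)^{-1}\leq |B|\,\Norm{\nu^{\omega}}{1,B}$ and the trivial $\Norm{\cdot}{2,B}\leq\Norm{\cdot}{\infty,B}$, this yields \eqref{eq:Poincare:weighted:d=1}. The step I expect to be most delicate is the uniform path-counting bound $\#\{e\in E_{d}\colon e'\in\gamma_{l}^{\rm opt}(e)\}\leq C(l)$ which feeds into $C_{\rm W}(l)$; the case $l=1$ is automatic because $\gamma_{1}^{\rm opt}(e)=\{e\}$, so $\omega_{1}=\omega$, the multiplicity is one, and one may take $C_{\rm W}(1)=1$ as in the statement.
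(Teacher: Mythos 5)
Your proposal is correct and follows essentially the same route as the paper's own proof: reduce to the unweighted Sobolev inequality \eqref{eq:Sobolev:Sa} with the specific exponent $\alpha = 2d/(d+2)$ (so that $d\alpha/(d-\alpha)=2$ and $\alpha/(2-\alpha)=d/2$), use a path-wise Cauchy--Schwarz along $\gamma_l^{\rm opt}(e)$ to insert the weights, decouple with H\"older at exponents $(2/(2-\alpha),\,2/\alpha)$, and control the resulting path multiplicity $N_l(e')$ by $O(l^d)$ with support in $B(C_{\rm W}n)$ — with the $d=1$ case handled by the $\ell^\infty$ Sobolev bound and a single Cauchy--Schwarz. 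The only (harmless) cosmetic slip is a stray power of $|B|$ in the displayed one-line Cauchy--Schwarz for $d=1$; it has no bearing on the argument.
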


\begin{proof}[Proof of Proposition \ref{prop:localPoincare}]
  As in \cite[Proposition~2.1 or 6.1]{ADS16}, the assertion is an immediate consequence of \eqref{eq:Sobolev:Sa} and H\"older's inequality (see also \cite[Lemma~2.3]{GM18}).  Nevertheless, we will repeat the argument here for the reader's convenience.

  Since $\Norm{u - (u)_{B(n)}}{2,B(n)} = \inf_{a \in \R} \Norm{u-a}{2,B(n)} \leq \inf_{a \in \R} \Norm{u-a}{\infty,B(n)}$,
  the assertion \eqref{eq:Poincare:weighted:d=1} follows from \eqref{eq:Sobolev:d=1} by an application of the Cauchy-Schwarz inequality.
  
  Let us now consider \eqref{eq:Poincare:weighted}, i.e., the case $d \geq 2$.
  For $e=\{x,y\}\in E_d$ we let $|\nabla u(e)|$ denote the difference $|u(x)-u(y)|$.
  For any $e\in E_d$ we observe that by the Cauchy-Schwarz inequality
  \begin{align*}
    |\nabla u(e)|
    \;\leq\;
    \bigg(\frac{1}{\w_l(e)}\bigg)^{\!\!1/2}\,
    \bigg(
      \sum_{e' \in E_d} \,\w(e')\, |\nabla u(e')|^2\,
      \mathds{1}_{e' \in \gamma_l^{\rm opt} (e)}
    \bigg)^{\!\!1/2}\mspace{-15mu},
  \end{align*}
  where we recall the definitions of $\w_l$ and $\gamma_l^{\rm opt}$ in \eqref{equ:Defwl} and below.
  Thus, for any $\alpha \in [1, 2)$, H\"older's inequality yields
  \begin{align}\label{eq:gradient:alpha}
    \bigg(
      \frac{1}{|B(n)|}\mspace{-3mu}
      \sum_{\substack{x,y \in B(n)\\\{x,y\} \in E_d}}\mspace{-5mu}
      |\nabla u(\{x,y\})|^{\alpha}
    \bigg)^{\!\!1/\alpha}
    \leq
    \Norm{\nu_l^{\w}}{\alpha/(2-\alpha), B(n)}^{1/2}\, 
    \bigg(
      \frac{1}{|B(n)|}
      \sum_{e' \in E_d}\mspace{-3mu} \w(e')\, |\nabla u(e')|^2\,
      N_l(e')
    \bigg)^{\!\!1/2}\mspace{-15mu},
  \end{align}
  where
  \begin{align*}
    N_l(e') \;\ldef\; \sum_{\substack{x,y \in B(n)\\\{x,y\} \in E_d}} \mathds{1}_{e' \in \gamma_l^{\rm opt} (\{ x,y \})}\quad\text{for any }e' \in E_d\, .
  \end{align*}
  Note that there exists $c<\infty$ such that $N_l(e') \leq c l^d$ for any $e' \in E_d$.
  In addition, there exists $C_{\rm W}<\infty$ such that $N_l ({x,y})=0$ if $x,y\notin B(C_{\rm W} n)$.
  Thus, when we choose $\alpha = 2d/(d+2)$, then \eqref{eq:Poincare:weighted} follows from \eqref{eq:Sobolev:Sa}.
\end{proof}

We define
\begin{align}
    \mathcal{E}_\w (u) \ldef \langle u,-\La u \rangle_{\ell^2 (\Zd)}\, ,
    \qquad(u\;\colon\, \Zd\to\R, u\in\ell^2 (\Zd))\, .
    \label{equ:DefDirichEn}
\end{align}

Our next task is to establish the corresponding versions of \eqref{eq:Sobolev:d=1} and \eqref{eq:Sobolev:Sa} on the weighted graph $(\Zd, \edges_d, \w)$.
For this purpose, for $d \geq 2$ and $q \geq 1$ we define
\begin{align}
  \rho
  \;\equiv\;
  \rho(d,q)
  \;\ldef\;
  \frac{d}{d-2+d/q}.
\end{align}
Notice that $\rho(d,q)$ is monotonically increasing in $q$ and converges to $d/(d-2)$ as $q$ tends to infinity.  Moreover, $\rho(d,d/2) = 1$.

\begin{prop}[Sobolev inequality]
\label{prop:Sobolev}
  Let $x_0 \in \Zd$ and $n \in\mathbb{N}$. 
  Suppose that $d=1$ and that $\nu^{\w} (x) <\infty$ for all $x\in\Zd$.
  Then there exists $C_{\mathrm{S}} < \infty$ such that
  \begin{align}\label{eq:Sobolev:weighted:d=1}
    \Norm{u^2}{\infty, B(x_0, n)}
    \;\leq\;
    C_{\mathrm{S}}\, n^2\, \Norm{\nu^{\w}}{1, B(x_0, n)}\, 
    \frac{\mathcal{E}_{\w}(u)}{|B(x_0, n)|}
  \end{align}
  for any $u\!:\Z \to \R$ with $\supp u \subset B(x_0, n)$.
  
  Furthermore, for every $d\geq 2$, $q \in [1, \infty)$ and $l \in [1,\infty)$ with $\nu_l^{\w} (x) <\infty$ for all $x\in\Zd$, there exists $C_{\mathrm{S}} \equiv C_{\mathrm{S}}(d, q, l) < \infty$ such that
  \begin{align}\label{eq:Sobolev:weighted}
    \Norm{u^2}{\rho, B(x_0, n)}
    \;\leq\;
    C_{\mathrm{S}}\, n^2\, \Norm{\nu_{l}^{\w}}{q, B(x_0, n)}\,
    \frac{\mathcal{E}_{\w}(u)}{|B(x_0, n)|}
  \end{align}
  for any $u\!:\Zd \to \R$ with $\supp u \subset B(x_0, n)$, where the measure $\nu_l$ is given by \eqref{eq:def:nu} with suitable path sets $\Gamma_l$.
  %
\end{prop}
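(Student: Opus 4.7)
The plan is to mimic the proof of the weighted Poincar\'e inequality in Proposition \ref{prop:localPoincare}, adapted to the compactly-supported setting. The two main ingredients are again the unweighted Sobolev inequality \eqref{eq:Sobolev:Sa} (resp.\ \eqref{eq:Sobolev:d=1} for $d=1$) and the path-based H\"older estimate of \eqref{eq:gradient:alpha}. The new feature is that the compact support of $u$ lets us control $\Norm{u}{2\rho, B(x_0,n)}$ itself, rather than merely the mean-subtracted norm.

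For $d \geq 2$, the exponent $\alpha$ in \eqref{eq:Sobolev:Sa} is dictated by matching: we need $d\alpha/(d-\alpha) = 2\rho$ (so that the Sobolev output controls $\Norm{u^2}{\rho, B} = \Norm{u}{2\rho, B}^2$) and simultaneously $\alpha/(2-\alpha) = q$ (so that the H\"older step \eqref{eq:gradient:alpha} produces the correct norm $\Norm{\nu_l^\w}{q, \cdot}^{1/2}$). Both constraints are satisfied by the unique choice $\alpha = 2q/(q+1) \in [1,2)$, which is consistent with $\rho = d/(d-2+d/q)$. I would then apply \eqref{eq:Sobolev:Sa} on a slightly enlarged ball $B' = B(x_0, 2n)$. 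Since $\supp u \subset B(x_0, n)$, the function vanishes on $B' \setminus B(x_0, n)$, a set of volume comparable to $|B'|$; hence for every $a \in \R$, $\Norm{u-a}{2\rho, B'} \geq c|a|$, so that $\Norm{u}{2\rho, B(x_0,n)} \lesssim \inf_a \Norm{u-a}{2\rho, B'}$ with a constant depending only on $d$. Combining this reduction with \eqref{eq:Sobolev:Sa}, the H\"older step \eqref{eq:gradient:alpha}, the multiplicity bound $N_l(e') \leq c\, l^d$, and the $O(1)$ volume ratio $|B'|/|B(x_0,n)|$ yields \eqref{eq:Sobolev:weighted} with a constant $C_{\mathrm{S}}(d, q, l)$.

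For $d = 1$, no mean-subtraction is needed. Since $u$ has compact support in $B(x_0, n) \subset \Z$, the telescoping identity gives $|u(x)| \leq \sum_{e \in E_d,\, e \subset B'} |\nabla u(e)|$ for every $x \in B(x_0,n)$, where $B' = B(x_0, n+1)$. Cauchy--Schwarz with weights $\w(e)^{\pm 1/2}$ then yields $|u(x)|^2 \leq \bigl(\sum_{e \subset B'} \w(e)^{-1}\bigr)\, \mathcal{E}_\w(u)$, and the elementary bound $\sum_{e \subset B'} \w(e)^{-1} \lesssim |B(x_0,n)| \Norm{\nu^\w}{1, B(x_0, n)}$ delivers \eqref{eq:Sobolev:weighted:d=1}.

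The main technical obstacle lies in the book-keeping of ball sizes in the $d \geq 2$ case: the optimal paths $\gamma_l^{\rm opt}(e)$ used in \eqref{eq:gradient:alpha} can protrude slightly beyond $B(x_0, n)$, so intermediate quantities involve the larger ball $B'$. One must verify that the passage from $\Norm{\nu_l^\w}{q, B'}$ and the Dirichlet sum over edges in $B'$ back to the norms $\Norm{\nu_l^\w}{q, B(x_0, n)}$ and $\mathcal{E}_\w(u)$ appearing in the statement costs only a constant depending on $l$, $d$, and $q$, and not on the environment.
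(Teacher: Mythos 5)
Your proposal matches the paper's proof: use the vanishing of $u$ on the annulus $B(2n)\setminus B(n)$ to bound $|a|$ by $\Norm{u-a}{\frac{d\alpha}{d-\alpha},B(2n)}$ and hence absorb the mean-subtraction, then combine the unweighted Sobolev inequality \eqref{eq:Sobolev:Sa} with the H\"older step \eqref{eq:gradient:alpha} at $\alpha=2q/(q+1)$; your explicit derivation of this $\alpha$ and your $d=1$ telescoping argument spell out steps the paper dismisses with ``similar arguments.'' On the ball-size worry you raise at the end: after applying \eqref{eq:Sobolev:Sa} and \eqref{eq:gradient:alpha} on $B'=B(x_0,2n)$ the bound carries $\Norm{\nu_l^\w}{q,B'}$, and contrary to your hope there is no environment-independent way to pass from this down to $\Norm{\nu_l^\w}{q,B(x_0,n)}$ (a norm over a sub-ball controls that over the larger ball only up to the opposite direction of the inequality). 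This off-by-a-constant-factor enlargement of the ball in the $\nu$-norm is an imprecision shared by the paper's own statement and is harmless where the proposition is actually used, namely \eqref{equ:SobolevEps}, since there the norm is taken over the whole box $B_{1/\eps}$ and the ergodic theorem makes the precise ball irrelevant as $\eps\to0$.
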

In analogy to \eqref{equ:DefDirichEn}, we define
\begin{align}
    \mathcal{E}_\omega^\eps(u^\eps)\;\ldef\;\left\langle  u^\eps, -\Laeps u^\eps\right\rangle_{\Heps}\,.
    \label{equ:DefDirichEnEps}
\end{align}
We prove Proposition \ref{prop:Sobolev} after the following remark.
\begin{remark}\label{rem:PI}
  For $d=1$, Proposition \ref{prop:Sobolev} implies that
  \begin{align}
    \max_{x\in Q_\eps} \left( \ue (x)\right)^2 \;\leq\; C_{\rm S} \Norm{\nu^{\w}}{1, B_{1/\eps}}\,  \mathcal{E}_{\w}^\eps (\ue)\, .
    \label{equ:linfty1D}
  \end{align}
  For $d\geq 2$, Proposition \ref{prop:Sobolev} implies that
  \begin{align}
    \| (\ue)^2 \|_{\ell^\rho_\eps (Q_\eps)} \;\leq\; C_{\rm S} \Norm{\nu_l^{\w}}{q, B_{1/\eps}}\,  \mathcal{E}_{\w}^\eps (\ue)\, .
    \label{equ:SobolevEps}
  \end{align}
  When we insert $q=d/2$ into \eqref{equ:SobolevEps}, we especially obtain that
  \begin{align}
    \| \ue \|^2_{\ell^2_\eps (Q_\eps)} \;\leq\; C_{\rm S} \Norm{\nu_l^{\w}}{\frac{d}{2}, B_{1/\eps}}\,  \mathcal{E}_{\w}^\eps (\ue)\, .
    \label{equ:PoincareBddSuppEps}
  \end{align}
  Under Assumption \ref{ass:Integ}\ref{ass:Integ:dhalf} and by virtue of the ergodic theorem, \eqref{equ:PoincareBddSuppEps} and \eqref{eq:Sobolev:weighted:d=1} imply that for $d\geq 1$ there exists a \Pas finite $C(\omega)$ such that for all $\eps>0$ and all $\ue\in\Heps$ we have
  \begin{align}
    \| \ue \|^2_{\Heps} \leq C(\omega)\, \mathcal{E}_{\w}^\eps (\ue) \quad\text{(uniform Poincar{\'e} inequality)}\, .
    \label{equ:PI}
  \end{align}
\end{remark}

\begin{proof}[Proof of Proposition \ref{prop:Sobolev}.]
  In the sequel we will give a proof only for \eqref{eq:Sobolev:weighted}.
  The assertion \eqref{eq:Sobolev:weighted:d=1} follows by similar arguments.
  To lighten notation, set $B(n) \equiv B(x_0, n)$ and define $A(n) \ldef B(2n) \backslash B(n)$.
  The constant $c \in (0, \infty)$ appearing in the computations below is independent of $\alpha$ but may change from line to line.
  Let $a \in \R$ and $\alpha \in [1, d)$.
  Since $u(x)=0$ for $x\in A(n)$, we have
  \begin{align*}
    |a|
    \;=\;
    \frac{1}{|A(n)|}\, \sum_{x \in A(n)} |u(x) - a|
    \;\leq\;
    \frac{|B(2n)|}{|A(n)|}\, \Norm{u-a}{1,B(2n)}
    \;\leq\;
    c\, \Norm{u-a}{\frac{d\alpha}{d-\alpha}, B(2n)}.
  \end{align*}
  Hence, an application of Minkowski's inequality yields
  \begin{align*}
    \Norm{u}{\frac{d\alpha}{d-\alpha}, B(n)}
    \;\leq\;
    \Norm{u-a}{\frac{d\alpha}{d-\alpha}, B(n)} \,+\, |a|
    \;\leq\;
    c\, \Norm{u-a}{\frac{d\alpha}{d-\alpha}, B(2n)}.
  \end{align*}
  Thus, for any $q \geq 1$ the assertion \eqref{eq:Sobolev:weighted} follows as in the previous proof from \eqref{eq:Sobolev:Sa} combined with \eqref{eq:gradient:alpha} by choosing $\alpha = 2q/(q+1) \in [1, 2)$.
\end{proof}

\subsection{Maximal inequality}
\label{subsec:MaximalInequalities}

\begin{prop}[$\ell^\infty$-bound for solution of Poisson equation in $d\geq 2$]
  \label{prop:linftyPoiss}
  Let $d\geq2$ and suppose that $\ue:\, \Zdeps\to \R$ is a solution of \eqref{equ:epsPoisson}.
  For some fixed $l\in[1,\infty)$ consider the measure $\nu_l^\w$ on $\Zd$ as defined in \eqref{eq:def:nu} and assume that $\nu_l^{\w} (x) <\infty$ for all $x\in\Zd$.
  Then, for any $q> d/2$ there exist $\gamma\in (0,1]$, $\kappa\equiv \kappa (d,q)$, and $C_1\equiv C_1 (d,q)$ such that
  \begin{align}
    \max_{x \in Q_\eps} |u(x)|
    \;\leq\;
    C_1\,
    \Big(
      1 \vee \Norm{\nu_l^{\w}}{q, B_{1/\eps}}\, \Norm{f^\eps}{\ell^\infty (Q_\eps)} 
    \Big)^{\!\kappa}\, 
    \Norm{u}{\ell^2_\eps}^{\gamma}.
    \label{equ:linfty2D}
  \end{align}
\end{prop}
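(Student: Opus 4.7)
\emph{Plan of proof.} The plan is to run a Moser iteration built on the weighted Sobolev inequality \eqref{equ:SobolevEps} of Remark \ref{rem:PI}. Abbreviate $M \ldef \Norm{\nu_l^{\w}}{q, B_{1/\eps}}$ and $F \ldef \Norm{f^\eps}{\ell^\infty(Q_\eps)}$, and note that $q > d/2$ is equivalent to $\rho \equiv \rho(d,q) > 1$.

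\textbf{Step 1 (test-function estimate).} For $p \geq 1$, test \eqref{equ:epsPoisson} against $\phi \ldef |u^\eps|^{2p-1}\mathrm{sign}(u^\eps)\in\Heps$. Exploiting the classical elementary inequality
\begin{align*}
\bigl(|a|^{2p-1}\mathrm{sign}(a) - |b|^{2p-1}\mathrm{sign}(b)\bigr)(a-b)
\;\geq\;
\tfrac{2p-1}{p^{2}}\bigl(|a|^p\mathrm{sign}(a) - |b|^p\mathrm{sign}(b)\bigr)^2
\end{align*}
edge-wise in the Dirichlet form together with the symmetry $\w_{x,z}=\w_{x+z,-z}$, one gets
\begin{align*}
\tfrac{2p-1}{p^{2}}\,\mathcal{E}_\omega^\eps\bigl(|u^\eps|^p\bigr)
\;\leq\;
\langle \phi, -\Laeps u^\eps \rangle_{\Heps}
\;=\;
\langle \phi, f^\eps\rangle_{\Heps}
\;\leq\;
F\,\Norm{u^\eps}{\ell^{2p-1}_\eps}^{2p-1}.
\end{align*}

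\textbf{Step 2 (Sobolev + Hölder to obtain a recursion).} Since $|u^\eps|^p$ is supported in $Q_\eps$, applying \eqref{equ:SobolevEps} to $|u^\eps|^p$ yields $\Norm{u^\eps}{\ell^{2p\rho}_\eps}^{2p} \leq C_{\mathrm{S}}\, M\, \mathcal{E}_\omega^\eps(|u^\eps|^p)$. Combining with Step 1, and using Hölder's inequality on the bounded cube $Q$ in the form $\Norm{u^\eps}{\ell^{2p-1}_\eps}^{2p-1}\leq |Q|^{1/(2p)}\Norm{u^\eps}{\ell^{2p}_\eps}^{2p-1}$ (where $|Q|^{1/(2p)}$ is bounded uniformly in $p\geq 1$), we arrive at the one-step estimate
\begin{align*}
\Norm{u^\eps}{\ell^{2p\rho}_\eps}
\;\leq\;
\bigl(C\, p\, M\, F\bigr)^{1/(2p)}\,\Norm{u^\eps}{\ell^{2p}_\eps}^{\,1-1/(2p)}.
\end{align*}

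\textbf{Step 3 (iteration and limit $k\to\infty$).} Set $p_k \ldef \rho^k$ and $A_k \ldef \Norm{u^\eps}{\ell^{2p_k}_\eps}$, so that $A_0 = \Norm{u^\eps}{\ell^2_\eps}$ and $A_k \to \Norm{u^\eps}{\ell^\infty(Q_\eps)}$. Iterating the previous estimate and taking logarithms gives
\begin{align*}
\log A_n
\;\leq\;
\log A_0 \cdot \prod_{k=0}^{n-1}\bigl(1-\tfrac{1}{2p_k}\bigr)
\;+\;
\sum_{k=0}^{n-1}\frac{\log(Cp_k M F)}{2p_k}\prod_{j=k+1}^{n-1}\bigl(1-\tfrac{1}{2p_j}\bigr).
\end{align*}
Because $\rho>1$, both $\sum_k p_k^{-1}$ and $\sum_k (\log p_k)/p_k$ converge; hence the infinite product $\gamma \ldef \prod_{k\geq 0}(1-\tfrac{1}{2p_k})\in(0,1]$ exists, and the sum $\kappa\ldef\sum_{k\ge 0}\tfrac{1}{2p_k}$ is finite. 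Passing $n\to\infty$ and collecting the constants produces
\begin{align*}
\max_{x\in Q_\eps} |u^\eps(x)|
\;\leq\;
C_1\bigl(1 \vee MF\bigr)^{\kappa}\,\Norm{u^\eps}{\ell^2_\eps}^{\gamma},
\end{align*}
with the $1\vee$ handling the case $MF\leq 1$. This is the asserted inequality.

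\textbf{Main expected obstacle.} The key point is that the iteration terminates in $\ell^\infty$ only because $\rho>1$, i.e. $q>d/2$; this is exactly why Assumption \ref{ass:Integ}\ref{ass:Integ:q} (and not merely \ref{ass:Integ}\ref{ass:Integ:dhalf}) is needed. The most delicate bookkeeping is in Step 3: the prefactors $(Cp_k)^{1/(2p_k)}$ accumulated over infinitely many steps must not blow up, which is ensured precisely by the summability of $(\log p_k)/p_k$ along the geometric sequence. A secondary technical point is bridging the $\ell^{2p-1}_\eps$–$\ell^{2p}_\eps$ mismatch in Step 2, which crucially uses $|Q|<\infty$; and the verification of the elementary inequality in Step 1 across sign changes of $u^\eps$ is classical but should be recorded carefully.
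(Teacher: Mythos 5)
Your proof is correct and follows essentially the same Moser-iteration approach as the paper: both use the same energy estimate (which the paper cites from \cite[Eq.\ (A.2)]{ADS15} rather than deriving from an elementary inequality as you do), the weighted Sobolev inequality \eqref{equ:SobolevEps}, and a Jensen/H\"older step to close the recursion in $\ell^{2p}_\eps$. The only cosmetic difference is in the termination: the paper truncates the iteration at a finite step $m$ with $2\rho^m>1/\eps$ and invokes the comparison $\max_{x\in Q_\eps}|u(x)|\leq(2/\eps)^{d/\beta}\|u\|_{\ell^\beta_\eps}$, and it also toggles $\gamma_j$ to $1$ whenever $\|\ue\|_{\ell^{2\rho^j}_\eps}\geq1$, whereas you pass $n\to\infty$ directly with the fixed product $\gamma=\prod_k(1-\tfrac{1}{2p_k})$ --- both variants are valid and yield the claimed bound.
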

We prove this proposition after the following remark.
\begin{remark}
  \label{rem:linftyPoiss}
  Note that if $\ue:\, \Zdeps\to \R$ is a solution of \eqref{equ:epsPoisson}, then due to \eqref{equ:linfty1D}, \eqref{equ:PoincareBddSuppEps} and the Cauchy-Schwarz inequality it follows for any dimension $d\geq 1$ that
  \begin{align}
    \| \ue\|^2_{\ell^2_\eps} \;\leq\; C_{\rm S} \Norm{\nu_l^{\w}}{\frac{d}{2}, B_{1/\eps}}\,  \mathcal{E}_{\w}^\eps (\ue) \;\leq\; C_{\rm S} \Norm{\nu_l^{\w}}{\frac{d}{2}, B_{1/\eps}}\, \| \ue\|_{\ell^2_\eps}\,  \| f^\eps\|_{\ell^2_\eps (Q_\eps)}\, .
  \end{align}
  Let Assumption \ref{ass:Integ}\ref{ass:Integ:dhalf} be fulfilled.
  Then $\sup_{\eps>0}\| f^\eps\|_{\ell^2_\eps (Q_\eps)}<\infty$ implies by the ergodic theorem that both $\sup_{\eps>0}\| \ue\|_{\ell^2_\eps}$ and $\sup_{\eps>0}\mathcal{E}_{\w}^\eps (\ue)$ are bounded as well.
  Thus, \eqref{equ:linfty1D} implies that in dimension one $\sup_{\eps>0}\| \ue \|_\infty$ is bounded.
  Furthermore, if even Assumption \ref{ass:Integ}\ref{ass:Integ:q} is fulfilled and $\sup_{\eps>0}\| f^\eps\|_{\ell^\infty (Q_\eps)}<\infty$, then \eqref{equ:linfty2D} implies that $\sup_{\eps>0}\| \ue \|_\infty$ is bounded for $d\geq 2$ as well.
\end{remark}

\begin{proof}[Proof of Proposition \ref{prop:linftyPoiss}]
  We use the Moser iteration scheme.
  Let us fix $\eps>0$ and consider $u_\eps:\, \Zdeps\to \R$ with $\supp\, u_\eps\in Q_\eps$.
  We define $\tilde u^\alpha := |u|^\alpha \mathrm{sign}\, u$ for any $\alpha\geq 1$.
  By virtue of Eq.\ (A.2) in \cite{ADS15} we obtain the following energy estimate
  \begin{align}
    \mathcal{E}_\w^\eps (\tilde u_\eps^\alpha)
      &\leq \frac{\alpha^2}{2\alpha -1} \,\eps^d \sum_{x\in\Zd} \tilde u_\eps^{2\alpha-1}(\eps x) \left( -\Laeps u_\eps \right)(\eps x)\, .
      \label{equ:MoserHelp}
  \end{align}
  Since $\ue$ is a solution to the Poisson equation \eqref{equ:epsPoisson}, the energy estimate \eqref{equ:MoserHelp} implies that
  \begin{align*}
    \mathcal{E}_\w^\eps \left(\left( \tilde u^\eps\right)^\alpha\right)
    &\;\leq\; \frac{\alpha^2}{2\alpha -1} \| f^\eps \|_{\ell^\infty(Q_\eps)}\, \eps^d \sum_{x\in Q_\eps} \left( \tilde u^\eps(x)\right)^{2\alpha-1} 
      = \frac{\alpha^2}{2\alpha -1} \| f^\eps \|_{\ell^\infty(Q_\eps)} \left\| \ue \right\|_{\ell^{2\alpha -1}_\eps}^{2\alpha -1}
  \end{align*}
  By the Sobolev inequality \eqref{equ:SobolevEps} and Jensen's inequality it follows that
  \begin{align}
    \| \ue \|^{2\alpha}_{\ell^{2\alpha\rho}_\eps}
    \;\leq\; C_{\rm S}\frac{\alpha^2}{2\alpha -1} \| f^\eps \|_{\ell^\infty(Q_\eps)} \Norm{\nu_l^{\w}}{q, B_{1/\eps}} \left\| \ue\right\|_{\ell^{2\alpha}_\eps}^{2\alpha -1} \, ,
    \label{equ:MoserHelp3}
  \end{align}
  We define $\alpha_{j} = \rho^j$ for $j\in\mathbb{N}_0$.
  Further, we set $\gamma_j := 1-1/(2\alpha_j)$ for $\left\| \ue\right\|_{\ell^{2\alpha_j}_\eps}< 1$ and $\gamma_j := 1$ for $\left\| \ue\right\|_{\ell^{2\alpha_j}_\eps}\geq 1$.
  Recall that $\rho\equiv \rho(d,q)>1$ for any $q>d/2$.
  Furthermore, we observe that for any $\beta>0$ we have $\max_{x\in Q_\eps} |u (x)| \leq (2/\eps)^{d/\beta} \| u \|_{\ell^\beta_\eps}$.
  Thus, by iterating the inequality \eqref{equ:MoserHelp3} and using the fact that $\sum_{j=1}^\infty j/ \alpha_j<\infty$, we obtain that there exists $C_1 \equiv C_1 (d,q)<\infty$ such that
  \begin{align*}
    \| \ue \|_\infty
    \;\leq\; (2/\eps)^{d\eps} \| \ue \|_{\ell^{1/\eps}_\eps}
    \;\leq\; C_1 \left\| \ue \right\|_{\ell^2_\eps}^\gamma \prod_{j=0}^m \left( 1\vee \| f^\eps \|_{\ell^\infty(Q_\eps)}\, \| \nu^\w_l \|_{q,B_{1/\eps}}\right)^{\frac{1}{2\rho^{j-1}}}\,
  \end{align*}
  where $\gamma = \prod_{j=0}^m \gamma_j \leq 1$ and $m$ such that $2\alpha_m >1/\eps$.
  Choosing $\kappa = \sum_{j=0}^{\infty} 1/(2\alpha_j) <\infty$, we complete the proof.
\end{proof}

\section{Compact embedding}
\label{sec:Compactness}
The very first step to prove homogenization of the operator $\Laeps$ is to show that a sequence $\Repsadj \ue$ ($\ue \in \Heps$) has a strongly convergent subsequence if $\sup_\eps \mathcal{E}_\omega^\eps(u^\eps)<\infty$.
The Dirichlet energy $\mathcal{E}_\omega^\eps$ is defined in \eqref{equ:DefDirichEnEps}.

For any $m\mspace{-1mu}\in\mspace{-1mu}\mathbb{N}$ consider a partition of $Q$ into $m^d$ congruent open subcubes $(Q^m_j)_{j=1,\ldots, m^d}$ with side length $2/m$.
For a fixed $m$ we further define $Q^\eps_j := \supp\, \Repsadj\left(\Reps \mathds{1}_{Q_j^m}\right)$, where we suppress the superscript ``$m$'' although $Q^\eps_j$ depends on $m$.
Then $Q_j^m\subset Q^\eps_j$ and $|Q^\eps_j\backslash Q_j^m| \to 0$ as $\eps\to 0$.
  
\begin{lem}\label{lem:precompact}
  Let $\omega\in\Omega$ and assume that the uniform Poincar\'e inequality \eqref{equ:PI} holds with a finite $C(\omega)$ and that for any $m\in\N$ there exists $\eps^\ast_m>0$ such that for all $\eps<\eps^\ast_m$ we have
  \begin{align}\label{equ:Ass:Erg}
    \max_{1\leq j\leq m^d} \Norm{\nu_l^{\w}}{q, \eps^{-1}Q_j^\eps} \;\leq\; 2 \E\left[\left(\nu_l^\w(0)\right)^q\right]^{1/q}\, .
  \end{align}
  Then the Poincar\'e inequality \eqref{eq:Poincare:weighted} implies that for any sequence $\ue \in \Heps$ ($\eps^{-1}\in\mathbb{N}$) with $\sup_{\eps>0} \mathcal{E}^\eps_\w (\ue) <\infty$, the sequence $(\Repsadj \ue)_{\eps>0}$ has a strongly convergent subsequence in $L^2 (\R^d)$.
\end{lem}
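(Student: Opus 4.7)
The strategy is a discrete Rellich--Kondrachov argument: approximate $\Repsadj \ue$ in $L^2(\Rd)$ by a piecewise constant function on the partition $\{Q_j^m\}_{j=1,\dots,m^d}$, show the approximation error is small uniformly in $\eps$ (vanishing as $m\to\infty$) via the local Poincar\'e inequality, and use finite-dimensional compactness together with a diagonal procedure.

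\textbf{Step 1 (global a priori bound).} From the uniform Poincar\'e inequality \eqref{equ:PI} and the assumption $\sup_\eps \mathcal{E}^\eps_\w (\ue)<\infty$, I obtain $\sup_\eps \|\ue\|_{\Heps}<\infty$, hence $\sup_\eps \|\Repsadj\ue\|_{L^2(Q)}<\infty$. In particular $\Repsadj\ue\weakto u$ in $L^2(Q)$ along a subsequence; the task is to upgrade this to strong convergence.

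\textbf{Step 2 (piecewise constant approximation).} For fixed $m\in\mathbb{N}$ and $j\in\{1,\dots,m^d\}$, set
\[
  a_{\eps,j}^{m} \;\ldef\; \big(\ue\big)_{Q_j^\eps\cap\Zdeps}
  \;=\; \frac{1}{|Q_j^\eps\cap\Zdeps|}\sum_{x\in Q_j^\eps\cap\Zdeps}\ue(x),
\qquad
  u^{\eps}_{m}\;\ldef\;\sum_{j=1}^{m^d} a_{\eps,j}^{m}\,\mathds{1}_{Q_j^m}.
\]
Rescaling Proposition \ref{prop:localPoincare} to $\Zdeps$ and applying it on each $\eps^{-1}Q_j^\eps$ (a box of side $\sim (m\eps)^{-1}$), together with the ergodic control \eqref{equ:Ass:Erg} of $\Norm{\nu_l^\w}{q,\eps^{-1}Q_j^\eps}$ by a deterministic constant $C_\w$ depending only on $\E[(\nu_l^\w(0))^q]$ (valid for all $\eps<\eps_m^\ast$), I obtain
\[
  \sum_{j=1}^{m^d}
  \big\|\Repsadj\ue - a_{\eps,j}^{m}\big\|^2_{L^2(Q_j^\eps)}
  \;\leq\;
  \frac{C_\w}{m^2}\,\mathcal{E}^\eps_\w(\ue)
  \;\leq\;
  \frac{C_\w\, M}{m^2},
\]
where $M\ldef \sup_\eps\mathcal{E}^\eps_\w(\ue)$. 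Since $|Q_j^\eps\setminus Q_j^m|\to0$ as $\eps\to0$ with $m$ fixed, and $\ue$ is bounded in $\Heps$, this yields
\[
  \limsup_{\eps\to0}\big\|\Repsadj\ue - u^{\eps}_m\big\|_{L^2(Q)}^2
  \;\leq\;
  \frac{C_\w\, M}{m^2}.
\]

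\textbf{Step 3 (finite-dimensional extraction and diagonal argument).} For fixed $m$ the vector $(a_{\eps,j}^{m})_{j=1}^{m^d}$ is bounded in $\R^{m^d}$ (by Step 1 and Cauchy--Schwarz), so along a subsequence $u^\eps_m\to v_m$ in $L^2(Q)$ for some step function $v_m$. A diagonal procedure over $m$ yields a subsequence $\eps_k\to 0$ such that $u^{\eps_k}_m\to v_m$ in $L^2(Q)$ for every $m$. Combining with Step 2,
\[
  \limsup_{k,k'\to\infty}\big\|\Repsadj[\eps_k]u^{\eps_k}-\Repsadj[\eps_{k'}]u^{\eps_{k'}}\big\|_{L^2(Q)}
  \;\leq\;
  \frac{2\sqrt{C_\w M}}{m}
\]
for every $m$, so $(\Repsadj[\eps_k]u^{\eps_k})$ is Cauchy in $L^2(\Rd)$ (its supports lie in $\overline{Q}$, so there is no issue outside $Q$), hence strongly convergent.

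\textbf{Main obstacle.} The technical core lies in Step 2: one must carefully rescale the local Poincar\'e inequality to the lattice $\Zdeps$ and bound $\Norm{\nu_l^\w}{q,\eps^{-1}Q_j^\eps}$ uniformly over the $m^d$ subcubes. This is exactly the role of the hypothesis \eqref{equ:Ass:Erg}, which via the ergodic theorem will be available almost surely; it converts the apparently random weight $\Norm{\nu_l^\w}{q,\cdot}$ into a deterministic constant and is what makes the $O(m^{-2})$ rate in the approximation uniform in $\eps$.
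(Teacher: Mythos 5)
Your proposal is correct and follows essentially the same approach as the paper: both rest on applying the local Poincar\'e inequality on the subcubes $Q_j^\eps$, using \eqref{equ:Ass:Erg} to control the weights $\Norm{\nu_l^\w}{q,\cdot}$ uniformly over $j$ and $\eps$, which yields the same $O(m^{-2})$ approximation error. The only difference is in the final extraction step: the paper first identifies a weak limit $u$ via Banach--Alaoglu and then bounds $\|\Repsadj\ue - u\|_2$ through a three-term split (whose last term $\|u-\mathcal{R}_{2/m}^\ast\mathcal{R}_{2/m}u\|_2$ vanishes as $m\to\infty$), whereas you replace that with finite-dimensional compactness of the block averages plus a diagonal argument to obtain a Cauchy subsequence; these two finishing moves are interchangeable and do not change the substance of the proof.
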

This lemma was also recently shown in \cite[Lemma 3.14]{Neukamm2017}.

\begin{remark}
 If Assumptions \ref{ass:Environ} and \ref{ass:Integ}\ref{ass:Integ:dhalf} are fulfilled,
 then for \Pae realization $\omega\in\Omega$ the hypotheses of Lemma \ref{lem:precompact} are fulfilled.
 That is, by virtue of Assumptions \ref{ass:Environ}\ref{ass:erg}, \ref{ass:FullLattice} and \ref{ass:Integ}\ref{ass:Integ:dhalf} as well as Remark \ref{rem:PI}, there exists a \Pas finite $C(\omega)$ such that \eqref{equ:PI} is fulfilled.
 Furthermore, the same assumptions together with the ergodic theorem imply that \Pas there exists $\eps^\ast_m>0$ such that for all $\eps<\eps^\ast_m$ \eqref{equ:Ass:Erg} holds.
\end{remark}

\begin{proof}[Proof of Lemma \ref{lem:precompact}]
  First of all we observe that by virtue of \eqref{equ:PI} we have
  \begin{align*}
    \| \Repsadj \ue \|_2 \;=\; \| \ue \|_{\ell^2_\eps} \leq C(\omega) \mathcal{E}^\eps_\w (\ue)\, ,
  \end{align*}
  which implies that $\sup_{\eps>0} \| \Repsadj \ue \|_2$ is finite by assumption.
  By the Banach-Alaoglu theorem it follows that there exists a subsequence, which we still index by $\eps$, and $u\in \Hz$ such that
  \begin{align*}
    \Repsadj \ue \rightharpoonup u\quad\text{weakly in }L^2 (Q)\, .
  \end{align*}

  We now show that $u$ is also a strong limit.
  We estimate
  \begin{align}
    \| \Repsadj \ue - u \|_2^2
    &\leq 2\sum_{j=1}^{m^d}\mspace{10mu} \Biggl( \mspace{5mu}\| \Repsadj \ue - (\Repsadj\ue)_{Q_j^\eps} \|^2_{L^2 (Q_j^\eps)} +\nonumber\\
    &\qquad\qquad\quad+\| (\Repsadj\ue - u)_{Q_j^\eps} \|^2_{L^2 (Q_j^\eps)} + \| (u)_{Q_j^\eps} - u \|^2_{L^2 (Q_j^\eps)}\Biggr)\, ,
    \label{equ:CompactnessSplit}
  \end{align}
  where, in analogy to \eqref{equ:DefAverage}, we abbreviate 
  \begin{align*}
    (v)_{Q_j^\eps} \;\ldef\; |Q_j^\eps|^{-1} \int_{Q_j^\eps} v(x) \d x\quad\text{for $v\colon\, \Rd \to \R$.}
  \end{align*}
  Since $\Repsadj \ue$ converges weakly in $L^2(Q)$ to $u$, the sum over the second term on the RHS of \eqref{equ:CompactnessSplit} vanishes as $\eps$ tends to zero.
  It remains to show that, as the $\eps\to0$, the limit superior of the sum of the first and third term is zero as well.
  
  We use arguments similar to the ones given in \cite[Proposition 2.9]{ADS15}, see also \cite[Lemma 3.14]{Neukamm2017}.
  Let $\vec{e}_i$ ($i=1,\ldots,d$) be the unit base vectors of $\Rd$.
  By virtue of Proposition \ref{prop:localPoincare} there exists $C_{\rm PI}<\infty$ such that \Pas for $\eps$ small enough the first term in the brackets of the RHS in \eqref{equ:CompactnessSplit} can be estimated by
  \begin{align}
    \| \Repsadj \ue - (\Repsadj\ue)_{Q_j^\eps} \|^2_{L^2 (Q_j^\eps)}
    &\;=\; \| \ue - (\ue)_{Q_j^\eps} \|^2_{\ell^2_\eps (Q_j^\eps)}\nonumber\\
    &\;\leq\; C_{\rm PI} \Norm{\nu_l^{\w}}{q, \eps^{-1}Q_j^\eps}\,\frac{4\eps^d}{m^2}\,   \sum_{i=1}^{d}\sum_{x,x+\vec{e}_i\in C_{\rm W}\eps^{-1}Q_j^\eps} \w_{x,\vec{e}_i} \left( \del_{\vec{e}_i}^\eps \ue (\eps x) \right)^2
    \label{equ:equ:CompactnessSplit2}
  \end{align}
  where for $d=1$ we set $l=C_{\rm W}=q= 1$. For $d\geq 2$ we set $q=d/2$.
  Since any edge $e\in E_d$ is contained in at most $C_{\rm o}\ldef 2dC_{\rm W}$ cubes $C_{\rm W}\eps^{-1}Q_j^\eps$, summing over $j=1,\ldots,m^d$ yields
  \begin{align}
    2\sum_{j=1}^{m^d} \| \Repsadj \ue - (\Repsadj\ue)_{Q_j^\eps} \|^2_{L^2 (Q_j^\eps)}
    \;\leq\; 8m^{-2} C_{\rm PI} C_{\rm o}\, \mathcal{E}_\w^\eps (\ue) \max_{1\leq j\leq m^d} \Norm{\nu_l^{\w}}{q, \eps^{-1}Q_j^\eps}\, .
    \label{equ:equ:CompactnessSplit3}
  \end{align}
  Note that $C_{\rm o}$ is independent of $m$ and $\mathcal{E}_\w^\eps (\ue)$ is bounded in $\eps$ by assumption.
  
  By virtue of \eqref{equ:Ass:Erg}, \eqref{equ:CompactnessSplit} and \eqref{equ:equ:CompactnessSplit3} it follows that there exists $C<\infty$ independent of $m$ such that $\Pr$-almost surely
  \begin{align*}
    \limsup_{\eps\to 0}\| \Repsadj \ue - u \|_2^2
    &\;\leq\; Cm^{-2} + 2\sum_{j=1}^{m^d} \limsup_{\eps\to 0} \| (u)_{Q_j^\eps} - u \|^2_{L^2 (Q_j^\eps)}\\
    &\;=\; Cm^{-2} + 2\| u- \mathcal{R}_{2/m}^\ast \mathcal{R}_{2/m} u\|^2_2\, .
  \end{align*}
  Since $m$ might be arbitrarily large and $u\in L^2(Q)$ has bounded support, the claim follows.
\end{proof}

\section{Analytic tools}
\label{sec:AnalyticTools}
In this section we always assume that the law $\Pr$ is stationary and ergodic with respect to spatial translations.

\subsection{An ergodic theorem}
In what follows, we will generalize a result by Boivin and Depauw.
\begin{thm}[{Ergodic Theorem by Boivin and Depauw \cite[Theorem 3]{BoivinDepauw2003}}]
\label{thm:Boiv-Dep}
For every $f\in L^{1}(\Omega,\P)$, for $\P$-almost every $\omega\in\Omega$ it holds
\begin{align}
\lim_{\eps\to0}\mspace{7mu}\eps^d \sum_{x\in \eps^{-1} Q_\eps} v(\eps x)f(\tau_x\omega)
\;=\;\mathbb{E}[f]\int_{Q}v(x)\d x\qquad\forall v\in C(\overline{Q})\,,
\label{equ:Boiv-Dep}
\end{align}
and the Null-set depends on $f$ but not on $v$.\end{thm}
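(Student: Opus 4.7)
The main obstacle is that the conclusion requires a single null set to work for \emph{all} $v\in C(\overline{Q})$ simultaneously, while Birkhoff's ergodic theorem gives an $\omega$-exceptional set depending on the test function. The plan is to apply the multidimensional ergodic theorem to countably many test functions (step functions over dyadic cubes) and to extend to general $v\in C(\overline{Q})$ by a uniform-approximation argument, controlling the error through the ergodic theorem applied to $|f|$.

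First, I would apply the standard multidimensional Birkhoff--Wiener ergodic theorem to obtain that on a full-measure event $\Omega_0\subset\Omega$, for every $\omega\in\Omega_0$,
\begin{equation*}
  \eps^{d}\sum_{x\in \eps^{-1}Q\cap\Z^{d}}|f(\tau_{x}\omega)|
  \;\xrightarrow[\eps\to 0]{}\;
  |Q|\,\mathbb{E}[|f|].
\end{equation*}
Next, enumerate the collection $\mathcal{D}_{\mathrm{dy}}$ of all dyadic subcubes of $Q$, and let $\mathcal{S}$ be the countable set of step functions of the form $v=\sum_{i=1}^{N}c_{i}\mathds{1}_{B_{i}}$ with $B_{i}\in\mathcal{D}_{\mathrm{dy}}$ disjoint and $c_{i}\in\mathbb{Q}$. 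For each $B\in\mathcal{D}_{\mathrm{dy}}$, the ergodic theorem applied to the stationary field $(f\circ\tau_{x})_{x\in\Z^{d}}$ yields, on a full-measure event $\Omega_{B}$,
\begin{equation*}
  \eps^{d}\sum_{x\in \eps^{-1}B\cap\Z^{d}} f(\tau_{x}\omega)
  \;\xrightarrow[\eps\to 0]{}\;
  |B|\,\mathbb{E}[f].
\end{equation*}
Since $\mathcal{D}_{\mathrm{dy}}$ is countable, the intersection $\Omega_{1}\ldef\Omega_{0}\cap\bigcap_{B\in\mathcal{D}_{\mathrm{dy}}}\Omega_{B}$ still has full measure, and by linearity \eqref{equ:Boiv-Dep} holds on $\Omega_{1}$ for every $v\in\mathcal{S}$.

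Finally, I would fix $\omega\in\Omega_{1}$ and an arbitrary $v\in C(\overline{Q})$. Since $v$ is uniformly continuous on the compact set $\overline{Q}$, for every $\delta>0$ there exists $v_{\delta}\in\mathcal{S}$ (e.g.\ constant with value near $v$ on each cube of a sufficiently fine dyadic partition) such that $\|v-v_{\delta}\|_{\infty}\le\delta$. Then
\begin{equation*}
  \bigg|\eps^{d}\sum_{x\in\eps^{-1}Q\cap\Z^{d}}\bigl(v(\eps x)-v_{\delta}(\eps x)\bigr)f(\tau_{x}\omega)\bigg|
  \;\le\;
  \delta\cdot\eps^{d}\sum_{x\in\eps^{-1}Q\cap\Z^{d}}|f(\tau_{x}\omega)|,
\end{equation*}
and the right-hand side converges to $\delta\,|Q|\,\mathbb{E}[|f|]$ by the first step. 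Likewise $\bigl|\int_{Q}(v-v_{\delta})\bigr|\le\delta\,|Q|$. Combining these with the convergence established for $v_{\delta}\in\mathcal{S}$ gives
\begin{equation*}
  \limsup_{\eps\to 0}\bigg|\eps^{d}\sum_{x\in\eps^{-1}Q\cap\Z^{d}}v(\eps x)f(\tau_{x}\omega)-\mathbb{E}[f]\int_{Q}v\bigg|
  \;\le\;
  \delta\,|Q|\bigl(\mathbb{E}[|f|]+|\mathbb{E}[f]|\bigr),
\end{equation*}
and letting $\delta\downarrow 0$ concludes. The null set $\Omega\setminus\Omega_{1}$ depends only on $f$, as required.

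The only delicate point is ensuring that the approximating step functions, although discontinuous, can be uniformly compared to $v$ on $\overline{Q}$; this is immediate from uniform continuity. All other steps are routine applications of the ergodic theorem and linearity.
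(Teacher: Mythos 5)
The paper states this result as Theorem 3 of Boivin and Depauw \cite{BoivinDepauw2003} and does not reproduce a proof, so there is no in-paper argument to compare against; your proposal is a reconstruction of the cited result. The reconstruction is correct and follows the natural route: isolate the null-set difficulty by proving the claim for a countable family of test functions (dyadic step functions with rational coefficients), then extend to arbitrary $v\in C(\overline{Q})$ by uniform approximation, with the error controlled by the almost-sure convergence of $\eps^{d}\sum_{x\in\eps^{-1}Q\cap\Zd}|f(\tau_{x}\omega)|$, so that the resulting null set depends on $f$ only.

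One step deserves a short justification that you gloss over. For a fixed dyadic cube $B$ that need not contain the origin, the sets $\eps^{-1}B\cap\Zd$ are \emph{not} nested, so the basic Birkhoff--Wiener or Tempelman pointwise ergodic theorem does not apply verbatim. This is easily repaired: split $B$ into at most $2^{d}$ rectangles each lying in a single orthant, and write the indicator of each such rectangle by inclusion--exclusion as a signed combination of indicators of corner rectangles $R$ having a vertex at the origin. For each corner rectangle $R$, the sets $\eps^{-1}R\cap\Zd$ do form a nested Tempelman sequence, the ergodic theorem yields $\eps^{d}\sum_{x\in\eps^{-1}R\cap\Zd}f(\tau_{x}\omega)\to|R|\,\E[f]$ almost surely, and summing the finitely many signed contributions gives the claimed convergence over $\eps^{-1}B\cap\Zd$. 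With that remark added, your argument is complete and self-contained.
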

\begin{remark}
\label{rem:Tempelman}Evidently, we can also choose $v$ as the
characteristic function of any relatively open or compact set $A\subset Q$ and we
obtain the Tempel'man ergodic theorem.
\end{remark}
We will use both Theorem \ref{thm:Boiv-Dep} and Remark \ref{rem:Tempelman} in order to prove the following theorem.
\begin{thm}
\label{thm:General-ergodic-thm}\
For every $f\in L^{1}(\Omega,\P)$, for $\P$-almost every $\omega\in\Omega$ the following holds:
Let $(\ue)_{\eps>0}$ be a sequence of functions from $\eps \Z^d \to \R$ with support in $Q_\eps$ such that $\Repsadj \ue\to u$ pointwise a.e.\ in $Q$.
Furthermore, let $\sup_{\eps>0}\left\Vert \ue\right\Vert _{\infty}<\infty$.
Then $u\in L^\infty (Q)$ and
\begin{equation}
\lim_{\eps\to0} \mspace{7mu}\eps^d \sum_{x\in \eps^{-1} Q_\eps} \ue(\eps x) f\left(\tau_x \omega\right)
\;=\;\mathbb{E}[f]\int_{Q}u(x)\d x
\label{eq:thm:General-ergodic-thm}
\end{equation}
and the Null-set depends on $f$ but not on the sequence $\ue$.\end{thm}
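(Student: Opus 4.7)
The plan is to reduce the statement to the already-established Boivin-Depauw/Tempel'man ergodic theorem by a two-parameter approximation: approximate $f$ by its truncation $f_K = f \wedge K \vee (-K)$ to reduce to bounded ergodic averages, and approximate the varying sequence $u^\eps$ by a fixed piecewise-constant function $u_m$ on a dyadic partition of $Q$, to which Remark \ref{rem:Tempelman} applies directly. The uniform $\ell^\infty$-bound on $u^\eps$ together with the pointwise a.e.\ convergence $\Repsadj u^\eps \to u$ gives, by dominated convergence, that $u\in L^\infty(Q)$ and $\Repsadj u^\eps \to u$ strongly in $L^p(Q)$ for every $p\in [1,\infty)$; this is the basic regularity we need.

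The main argument is a triangle-inequality decomposition. Fix $m\in\N$ and set $u_m \ldef \sum_{j=1}^{m^d} c_j^m\, \mathds{1}_{Q_j^m}$ with $c_j^m \ldef (u)_{Q_j^m}$. For the ``piecewise-constant'' term, Remark \ref{rem:Tempelman} applied cube by cube yields
\begin{equation*}
\eps^d \sum_{x\in\eps^{-1}Q_\eps} u_m(\eps x)\, f(\tau_x\omega)
\;\xrightarrow[\eps\to0]{}\;
\E[f] \int_Q u_m(x)\,\d x,
\end{equation*}
and $\int_Q u_m \to \int_Q u$ as $m\to\infty$ since $u_m \to u$ in $L^1(Q)$. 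The residual term is controlled by splitting $f = f_K + (f-f_K)$: writing $M \ldef \sup_\eps \|u^\eps\|_\infty$, we obtain
\begin{equation*}
\Big|\eps^d \sum_{x} (u^\eps - u_m)(\eps x)\, f(\tau_x\omega)\Big|
\;\leq\;
K\, \| \Repsadj u^\eps - u_m \|_{L^1(Q)} \;+\; 2M \cdot \eps^d \sum_{x\in\eps^{-1}Q_\eps} (|f|-K)_+(\tau_x\omega).
\end{equation*}
The first summand tends, as $\eps\to0$, to $K\|u-u_m\|_{L^1(Q)}$ by the $L^1$ convergence noted above, and then to $0$ as $m\to\infty$. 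Applying Tempel'man to $(|f|-K)_+ \in L^1(\Omega,\P)$ the second summand tends to $2M|Q|\, \E[(|f|-K)_+]$, which vanishes as $K\to\infty$ by dominated convergence on $\Omega$. Taking $\limsup_{\eps\to0}$ first, then $m\to\infty$, then $K\to\infty$, gives \eqref{eq:thm:General-ergodic-thm}.

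The only subtlety is making sure the exceptional null set in $\Omega$ depends on $f$ alone and not on the sequence $u^\eps$. For this, I would fix once and for all a countable family of test data: all the dyadic subcubes $Q_j^m$ (countable) and all truncations $(|f|-K)_+$ for rational $K\geq 0$ (countable). Each requires one application of Boivin-Depauw/Tempel'man to $f$ or $|f|$ against a fixed characteristic function, each excluding a $\P$-null set; the countable union is still $\P$-null. On its complement, all the convergences used above hold simultaneously, and the resulting bound $\limsup_{\eps\to0}|\mathrm{LHS} - \E[f]\int_Q u|$ can be made arbitrarily small by choosing $m$ and $K$ large, independently of the particular sequence $u^\eps$. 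The main minor obstacle is precisely this bookkeeping of null sets together with the non-smoothness of the characteristic functions of cubes, which forces the use of Remark \ref{rem:Tempelman} rather than Theorem \ref{thm:Boiv-Dep} directly.
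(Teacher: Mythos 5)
Your proof is correct and takes a genuinely different route from the paper's. The paper approximates $u$ from the \emph{continuous} side: it mollifies $u$ to $u_\delta = u\ast\rho_\delta$, invokes Egorov's theorem to find a compact $K_\eta\subset Q$ with small complement on which both $\Repsadj u^\eps\to u$ and $u_\delta\to u$ uniformly, and then feeds the continuous function $u_\delta$ into Theorem \ref{thm:Boiv-Dep} directly; the uniform control on $K_\eta$ and the smallness of $Q\backslash K_\eta$ make it unnecessary to truncate $f$ — the contribution from the bad set is $O(\eta)\E[|f|]$. You instead approximate $u$ from the \emph{discrete} side by piecewise constants on a fixed dyadic partition and apply Remark \ref{rem:Tempelman} cube by cube. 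Since you only have $L^1$- (not uniform) control on $\Repsadj u^\eps - u_m$, you are forced to split $f = f_K + (f-f_K)$ and pay the price $\E[(|f|-K)_+]$; the trade is that your test sets (dyadic cubes) and test functions ($(|f|-K)_+$ for rational $K$) form an explicitly countable family, which makes the null-set bookkeeping entirely transparent and visibly independent of the sequence $u^\eps$. The paper achieves the same independence more implicitly, by relying on the fact that Remark \ref{rem:Tempelman} holds simultaneously for \emph{all} compact sets on a single $f$-dependent null set, even though $K_\eta$ itself depends on $u$. Both approaches are legitimate and of comparable length.

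One small technical point you should tighten: the identity $\eps^d\sum_x\bigl|(u^\eps-u_m)(\eps x)\bigr| = \|\Repsadj u^\eps - u_m\|_{L^1(Q)}$ is not exact, because $u_m$ is constant on dyadic cubes $Q_j^m$, not on the $\eps$-cells $b(z,\eps/2)$, and these partitions do not align. The correct statement is that the left-hand side equals $\|\Repsadj u^\eps - \Repsadj\Reps u_m\|_{L^1}$, and the discrepancy $\|u_m - \Repsadj\Reps u_m\|_{L^1}$ is supported on boundary strips of total measure $O(\eps\, m^{d-1})$ and bounded by $2M$, hence vanishes as $\eps\to0$ for fixed $m$. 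With that correction inserted the argument closes cleanly.
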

\begin{proof}
First we note that $u\in L^\infty (Q)$ since $\sup_{\eps>0}\left\Vert \ue\right\Vert _{\infty}<\infty$.
Now we let $\eta>0$ and let $\rho_\delta$ be a sequence of mollifiers approximating the identity. By Egorov's theorem, there exists a compact set $K_{\eta}$
with ${\Leb}\left(Q\backslash K_{\eta}\right)<\eta$ such that both $\Repsadj\ue\to u$
and $u_{\delta}:=u\ast\rho_{\delta}\to u$ uniformly on $K_{\eta}$.
We now make the following decomposition:
\begin{align}
  &\left| \mspace{7mu}\eps^d \sum_{x\in \eps^{-1} Q_\eps} \ue(\eps x) f\left(\tau_x \omega\right) - \mathbb{E}[f]\int_{Q}u(x)\d x\mspace{7mu}\right|\nonumber\\
  &\qquad\quad\leq \left| \mspace{7mu}\eps^d \sum_{x\in \eps^{-1} Q_\eps} \left( \ue(\eps x) - u_\delta (\eps x)\right) f\left(\tau_x \omega\right) \mspace{7mu}\right| +\nonumber\\
  &\qquad\quad\phantom{\leq}+\left| \mspace{7mu}\eps^d \sum_{x\in \eps^{-1} Q_\eps} u_\delta (\eps x) f\left(\tau_x \omega\right) - \mathbb{E}[f]\int_{Q}u_\delta (x)\d x\mspace{7mu}\right| +
  \left| \mspace{7mu}\mathbb{E}[f]\int_{Q}\left( u_\delta (x) - u(x) \right) \d x\mspace{7mu}\right|
  \label{equ:PfGenErgodichelp2}
\end{align}
Since $u_\delta\in C(\overline{Q})$, the second summand on the above RHS converges to zero by virtue of Theorem \ref{thm:Boiv-Dep}.
For the first summand on the RHS of \eqref{equ:PfGenErgodichelp2} we estimate that
\begin{align}
\lim_{\eps\to0}\;&\left| \eps^d \sum_{x\in \eps^{-1} Q_\eps} \left( \ue(\eps x) - u_\delta(\eps x)\right) f\left(\tau_x \omega\right) \right|\nonumber\\
&\qquad\qquad\qquad\qquad\qquad \leq\lim_{\eps\to0}\sup_{x\in K_{\eta}}\left|\ue(x)-u_\delta (x)\right|\, \eps^d \sum_{x\in \eps^{-1}(K_{\eta}\cap Q_\eps)}\left|f\left(\tau_x\omega\right)\right|\nonumber\\
&\qquad\qquad\qquad\qquad\qquad\qquad+\lim_{\eps\to0}\left(\left\Vert u_\delta\right\Vert _{\infty}+\left\Vert \ue\right\Vert _{\infty}\right)\, \eps^d \sum_{x\in \eps^{-1}Q_\eps \backslash K_{\eta}} \left|f\left(\tau_x\omega\right)\right|\, .
 \label{equ:PfGenErgodichelp1}
\end{align}
Since the function $\Repsadj u^\eps$ converges uniformly in $\eps$ to $u$ on $K_{\eta}$,
we can estimate by virtue of Remark \ref{rem:Tempelman} that
\begin{align*}
\lim_{\eps\to0}\;\sup_{x\in K_{\eta}}\left|\ue(x)-u_\delta (x)\right|\; \eps^d\mspace{-9mu} \sum_{x\in \eps^{-1}(K_{\eta}\cap Q_\eps)}\left|f\left(\tau_x\omega\right)\right|
\;\leq\; \sup_{x\in K_{\eta}}\left|u_{\delta}(x)-u(x)\right|\,|Q|\;\E [f]\, .
\end{align*}
We further estimate the second summand on the RHS of \eqref{equ:PfGenErgodichelp1} by
\begin{align*}
  \lim_{\eps\to0}\;\left(\left\Vert u_\delta\right\Vert _{\infty}+\left\Vert \ue\right\Vert _{\infty}\right)\; \eps^d\mspace{-9mu} \sum_{x\in \eps^{-1}Q_\eps \backslash K_{\eta}} \left|f\left(\tau_x\omega\right)\right|
 &\;\leq\;2\eta \sup_{\eps>0}\left\Vert \ue\right\Vert _{\infty} \E [f]\,,
\end{align*}
where we have used  Remark \ref{rem:Tempelman}.

Thus, as $\eps\to0$, we obtain that
\begin{align*}
  &\lim_{\eps\to 0}\;\left| \eps^d \sum_{x\in \eps^{-1} Q_\eps} \ue(\eps x) f\left(\tau_x \omega\right) - \mathbb{E}[f]\int_{Q}u(x)\d x\right|\\
  &\qquad\quad\leq \sup_{x\in K_{\eta}}\left|u_{\delta}(x)-u(x)\right|\,|Q|\,\E [f] \;+\; 2\eta \sup_{\eps>0}\left\Vert \ue\right\Vert _{\infty} \E [f] \;+\; 
  \left| \mathbb{E}[f]\int_{Q}\left( u_\delta (x) - u(x) \right) \d x\right|
\end{align*}
As $\delta\to0$, the uniform convergence $u_\delta \to u$ on $K_\eta$ yields
\begin{align*}
\lim_{\eps\to 0}\;\left|\; \eps^d\mspace{-9mu} \sum_{x\in \eps^{-1} Q_\eps} \ue(\eps x) f\left(\tau_x \omega\right) \;-\; \mathbb{E}[f]\int_{Q}u(x)\d x\;\right|
\;\leq\; 2\eta\, \sup_{\eps>0}\left\Vert \ue\right\Vert _{\infty}\E [f]\,.
\end{align*}
Since the last inequality holds for every $\eta>0$, the claim follows.
\end{proof}

\subsection{Function spaces}
In what follows, we always assume that Assumption \ref{ass:Environ}\ref{ass:momcond} holds.
We first note that the probability space given in \eqref{eq:def-prob-space}
is generated from the compact metric space $[0,\infty]^{E}$, and
therefore the notion of continuity on $\Omega$ makes sense.
We say that a function $\varphi:\, \Omega\times \Zd \to \R$ is \emph{shift covariant} if it fulfills
\begin{align}
  \varphi(\omega,x+z)-\varphi(\omega,x)\;=\;\varphi(\tau_{x}\omega,z)
  \label{equ:shiftcov}
\end{align}
for all $x,z\in\Zd$ (cf. \cite{Biskup2011review} Eq. (3.14)).
Note that shift covariant functions $\varphi$ fulfill $\varphi(\w,0)=0$.
Then \eqref{equ:shiftcov} directly implies that
\begin{align}
  \varphi(\omega,x)=-\varphi(\tau_{x}\omega,-x)\, .
  \label{equ:shiftcov2}
\end{align}
We define on $\Omega\times\Zd$ the space 
\begin{align*}
L_{\rm cov}^{2} & :=\left\{ \varphi:\,\Omega\times\Zd\to\R\,\,:\,\,\varphi\text{ satisfies \eqref{equ:shiftcov} and }\left\Vert \varphi\right\Vert _{L_{\rm cov}^{2}}<\infty\right\}\, ,\\
\text{where }
\left\Vert \varphi\right\Vert^2_{L_{\rm cov}^{2}} &:=\E\left[ \sum_{z\in\Zd}\omega_{0,z} \varphi(\omega,z)^{2}\right]\,.
\end{align*}
Accordingly, we define the scalar product between $\varphi_1, \varphi_2\in L^2_{\rm cov}$ by
\begin{align}
  \langle \varphi_1, \varphi_2 \rangle_{L^2_{\rm cov}} := \E\left[ \sum_{z\in\Zd}\omega_{0,z} \varphi_1(\omega,z) \varphi_2(\omega,z)\right]\, .\label{equ:scalarprodCov}
\end{align}

Note that $L_{\rm cov}^{2}$ is a closed subspace of $\bigotimes_{z\in\Zd} L^2(\Omega,\mu_z)$, where $\mu_z$ is the measure on $\Omega$ defined by $\d\mu_z(\omega)=\omega_{0,z}\d\Pr(\omega)$.
Since $\Omega$ is a compact metric space, $L^2(\Omega,\mu_z)$ is separable for all $z\in\Zd$ and thus also the countable product space $\bigotimes_{z\in\Zd} L^2(\Omega,\mu_z)$ and its subspace $L_{\rm cov}^{2}$ are separable. 

Further, we note that for all $\phi:\Omega\to\R$ it holds that $\mD\phi(\omega,z):=\mD_{z}\phi(\omega):=\phi(\tau_{z}\omega)-\phi(\omega)$ satisfies $\mD\phi(\omega,x+z)-\mD\phi(\omega,x)=\mD\phi(\tau_{x}\omega,z)$.
Therefore $\mD\phi$ is in $L^2_{\rm cov}$.
A \emph{local function} on $\Omega$ is a bounded, continuous function that only depends on finitely many coordinates of $[0,\infty]^{E}$.
Following the outline of Chapter 3 in \cite{Biskup2011review}, we define the closed subspace 
\[
L_{\rm pot}^{2}\;\ldef\;\overline{\left\{ \mD\phi\,:\,\phi\mbox{ local}\right\} }^{L_{\rm cov}^{2}}\,.
\]
Let $L_{\rm sol}^{2}$ be the orthogonal complement of $L_{\rm pot}^{2}$ in $L^2_{\rm cov}$ and let us define
\begin{align*}
  \mathrm{div}\left(\omega b\right)\;\ldef\;\sum_{z}\omega_{0,z}\left(b(\omega,z)-b(\tau_{z}\omega,-z)\right)\, .
\end{align*}
Note that since $b$ satisfies \eqref{equ:shiftcov2}, the last equation also reads 
\begin{equation}\label{eq:divergence}
\mathrm{div}\left(\omega b\right) \;=\; 2 \sum_{z}\omega_{0,z} b(\omega,z).
\end{equation}
Then we have the following lemma.
\begin{lem}[{\cite[Lemma 3.6]{Biskup2011review}}]
\label{lem:DivSol}
  \begin{align}
    \mathrm{div}\left(\omega b\right) \;=\; 0 \qquad\text{for all }b\in L_{\rm sol}^{2}\text{ and $\Pr$-a.a. }\w\, .
  \end{align}
\end{lem}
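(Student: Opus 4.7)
The plan is to express the $L^{2}_{\rm cov}$-inner product of an arbitrary gradient $\mD\phi$ with $b$ as a pairing between $\phi$ and $\mathrm{div}(\omega b)$ in $L^{2}(\Omega,\P)$, and then use density of local functions.

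First I would verify that $\mathrm{div}(\omega b)$ is actually well-defined as an $L^{1}(\P)$ random variable. By Cauchy--Schwarz in the $z$-sum,
\[
  \big|\mathrm{div}(\omega b)\big|
  \;\leq\; 2\Big(\sum_{z} \omega_{0,z}\Big)^{\!1/2}\Big(\sum_{z} \omega_{0,z}\, b(\omega,z)^{2}\Big)^{\!1/2},
\]
and taking expectation, using Assumption \ref{ass:Environ}\ref{ass:momcond} (which gives $\E[\sum_z \omega_{0,z}] < \infty$ since $|z|\geq 1$) together with $b\in L^{2}_{\rm cov}$, yields $\E|\mathrm{div}(\omega b)| \leq C\|b\|_{L^{2}_{\rm cov}}<\infty$.

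The core computation is to fix a local function $\phi$ and rewrite
\[
  \langle \mD\phi, b\rangle_{L^{2}_{\rm cov}}
  \;=\;
  \E\Big[\sum_{z} \omega_{0,z}\,\phi(\tau_{z}\omega)\, b(\omega,z)\Big]
  \;-\;
  \E\Big[\sum_{z} \omega_{0,z}\,\phi(\omega)\, b(\omega,z)\Big].
\]
In the first term I would perform the change of variable $\omega \mapsto \tau_{-z}\omega$ termwise (which is justified by stationarity of $\P$ together with the integrability just established). Using the identity $\omega_{x,z}=\omega_{x+z,-z}$ one gets $(\tau_{-z}\omega)_{0,z} = \omega_{0,-z}$, and using the shift covariance \eqref{equ:shiftcov} at $x=-z$ together with $b(\omega,0)=0$ one obtains $b(\tau_{-z}\omega,z) = -b(\omega,-z)$. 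After relabeling $z \mapsto -z$ in the resulting sum, the first term becomes $-\E[\sum_{z} \omega_{0,z}\,\phi(\omega)\,b(\omega,z)]$. Combining the two terms and invoking the equivalent representation \eqref{eq:divergence} yields
\[
  \langle \mD\phi, b\rangle_{L^{2}_{\rm cov}}
  \;=\;
  -2\,\E\Big[\phi(\omega)\sum_{z}\omega_{0,z}\,b(\omega,z)\Big]
  \;=\;
  -\E\big[\phi\cdot \mathrm{div}(\omega b)\big].
\]

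Now I would close the argument: since $b\in L^{2}_{\rm sol}$, the left-hand side vanishes for every local $\phi$, so $\E[\phi\cdot \mathrm{div}(\omega b)]=0$ for all bounded continuous functions depending on finitely many coordinates. Because $\mathrm{div}(\omega b)\in L^{1}(\P)$ and local functions are dense in $L^{\infty}(\Omega,\P)$ in a sense sufficient for separating $L^{1}$ (indeed, they generate the $\sigma$-algebra $\mathcal{F}$, so bounded $\mathcal{F}$-measurable functions can be approximated in $L^{1}(\P)$ by local functions), this forces $\mathrm{div}(\omega b)=0$ for $\P$-a.e.\ $\omega$.

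The main technical point to take care of is the termwise change of variables in the infinite sum: one must justify swapping expectation and summation before applying the shift, which is where Cauchy--Schwarz on $\omega_{0,z}$ and the integrability $\E[\sum_z \omega_{0,z}]<\infty$ from Assumption \ref{ass:Environ}\ref{ass:momcond} together with $b\in L^{2}_{\rm cov}$ and $\phi\in L^\infty$ are essential.
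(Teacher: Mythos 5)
Your proof is correct, and it is precisely the standard integration-by-parts argument used to prove \cite[Lemma 3.6]{Biskup2011review}, which is the reference the paper cites in lieu of giving its own proof. The shift-covariance and stationarity bookkeeping is handled correctly, the Fubini/integrability justification via Cauchy--Schwarz and Assumption \ref{ass:Environ}\ref{ass:momcond} is sound, and the final density step (local functions generate $\mathcal{F}$, so they separate $L^1(\P)$) closes the argument.
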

Using the above notation, we define $\chi\in\left(L_{\rm pot}^{2}\right)^{d}$
through
\begin{equation}
\chi=\mathrm{argmin}\left\{ \E\left[\sum_{z\in\Zd}\w_{0,z}|z+\tilde{\chi}(\w,z)|^2\right]\,:\,\tilde{\chi}\in\left(L_{\rm pot}^2\right)^{d}\right\} \,,\label{eq:definition-corrector}
\end{equation}	
i.e., $\chi_j$ is the orthogonal projection of $z_j\in L^2_{\rm cov}$ on the space $L^2_{\rm pot}$ with respect to the scalar product defined in \eqref{equ:scalarprodCov}.
We will see below that we can write the homogenized matrix as
\begin{equation}
\left(A_{\rm hom}\right)_{i,j}=\E\left[ \sum_{z\in\Zd}\w_{0,z}\left(\vec{e}_{i}\cdot\left[z+\chi(\w,z)\right]\right)\left(\vec{e}_{j}
\cdot\left[z+\chi(\w,z)\right]\right)
\right]\,,\label{eq:Definition-A-hom}
\end{equation}
where the $\vec{e}_i$, $i=1,\ldots,d$, denote the unit base vectors of $\Rd$.
In analogy to \cite[Lemma 4.5]{Faggionato2008} we know the following result.
\begin{lem}
\label{lem:non-degeneracy}
Suppose that $\E\big[\nu_l^{\w}(0)\big] < \infty$ with $\nu_l^{\w}$ as defined in \eqref{eq:def:nu}.
Then the matrix $A_{\mathrm{hom}}$ is positive definite.
In particular, the vectorial space spanned by the following vectors
\begin{align}
  \E\Big[
   {\textstyle \sum_{z \in \Zd}}\, \w(0, z)\, z b(\w, z)
  \Big] \in \Rd,
  \qquad b \in L^2_{\mathrm{sol}}
  \label{eq:Non-degeneracy-consequence}
\end{align}
coincides with $\Rd$.
\end{lem}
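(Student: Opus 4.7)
My approach is to show $A_{\hom}$ is positive definite and then deduce the span identity as a corollary. For $\xi\in\R^d$ I would set $b_\xi(\omega,z)\ldef\xi\cdot(z+\chi(\omega,z))$. Since $\chi\in(L^2_{\rm pot})^d$ is (minus) the orthogonal projection of the coordinate map $z$ onto $L^2_{\rm pot}$, the residual $z+\chi$ lies in $(L^2_{\rm sol})^d$ and $b_\xi\in L^2_{\rm sol}$. Exploiting orthogonality between $L^2_{\rm pot}$ and $L^2_{\rm sol}$, a short computation gives $\xi^{\top}A_{\hom}\xi=\|b_\xi\|_{L^2_{\rm cov}}^2$ and $A_{\hom}\xi=\E\bigl[\sum_z\w_{0,z}\,z\,b_\xi(\omega,z)\bigr]$. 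This exhibits $A_{\hom}$ as symmetric and positive semidefinite and places every vector $A_{\hom}\xi$ in the form \eqref{eq:Non-degeneracy-consequence} with $b=b_\xi$; once $A_{\hom}$ is invertible, the span in \eqref{eq:Non-degeneracy-consequence} must therefore equal $\R^d$.

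To obtain strict positive definiteness I argue by contradiction. Suppose $\xi^{\top}A_{\hom}\xi=0$ for some $\xi\neq 0$. Then $b_\xi\equiv 0$ in $L^2_{\rm cov}$, so the coordinate function $(\omega,z)\mapsto -\xi\cdot z$ belongs to $L^2_{\rm pot}$. Consequently, there exist bounded local functions $\phi_n:\Omega\to\R$ with
\[
\E\Bigl[\,{\textstyle\sum_z}\,\w_{0,z}\bigl(\xi\cdot z+\mD\phi_n(\omega,z)\bigr)^2\Bigr]\;\longrightarrow\;0.
\]
The plan is to promote this weighted $L^2_{\rm cov}$-statement, which carries no information on edges where $\w_{0,z}$ is small, into genuine $L^1(\P)$ control of the nearest-neighbor increment $\xi_i+\mD\phi_n(\omega,\vec{e}_i)$. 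Once that is in hand, stationarity of $\P$ forces $\E[\mD\phi_n(\omega,\vec{e}_i)]=0$, so $|\xi_i|=|\E[\xi_i+\mD\phi_n(\omega,\vec{e}_i)]|\to 0$ for each $i$, contradicting $\xi\neq 0$.

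The bridge is the path construction in \eqref{equ:Defwl}. For the edge $e=\{0,\vec{e}_i\}$, telescoping $\xi_i+\mD\phi_n(\omega,\vec{e}_i)$ as a signed sum of increments over the optimal path $\gamma_l^{\rm opt}(e)$ and applying Cauchy--Schwarz with weights $\w(e')$ yields
\[
\bigl(\xi_i+\mD\phi_n(\omega,\vec{e}_i)\bigr)^{2}\;\leq\;\nu_l^\w(0)\mspace{-6mu}\sum_{e'=\{x,x+z\}\in\gamma_l^{\rm opt}}\mspace{-8mu}\w(e')\bigl(\xi\cdot z+\mD\phi_n(\tau_x\omega,z)\bigr)^{2}.
\]
After enlarging the random path $\gamma_l^{\rm opt}$ to the deterministic finite set $B_l$ of nearest-neighbor edges admissible for $\Gamma_l(e)$, a second Cauchy--Schwarz in $\P$ together with stationarity produces
\[
\E\bigl[|\xi_i+\mD\phi_n(\omega,\vec{e}_i)|\bigr]\;\leq\;\E[\nu_l^\w(0)]^{1/2}\Bigl(\sum_{\{x,x+z\}\in B_l}\mspace{-9mu}\E\bigl[\w_{0,z}(\xi\cdot z+\mD\phi_n(\omega,z))^2\bigr]\Bigr)^{\!1/2}.
\]
The outer factor is finite by assumption, and the inner sum is a finite subsum of the $L^2_{\rm cov}$-quantity above, hence tends to $0$.

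The main obstacle is exactly the path step. The minimizer $\gamma_l^{\rm opt}(e)$ is $\omega$-dependent, so it cannot be integrated term by term against $\P$, while the weighted $L^2_{\rm cov}$-bound is powerless on edges with small $\w_{0,z}$. The remedy is to dominate the random sum by a finite deterministic collection $B_l$ of edges before invoking stationarity; $\E[\nu_l^\w(0)]<\infty$ is precisely what makes the Cauchy--Schwarz factor integrable, and Assumption \ref{ass:Environ}\ref{ass:FullLattice} guarantees that nearest-neighbor paths of length at most $l$ between adjacent vertices exist in the first place.
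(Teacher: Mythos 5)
Your proof is correct and takes essentially the same route as the paper: both arguments rest on the path/Cauchy--Schwarz estimate that bounds $\E\bigl[|\psi(\w,\vec{e}_i)|\bigr]$ by a multiple of $\E\bigl[\nu_l^\w(0)\bigr]^{1/2}\,\|\psi\|_{L^2_{\rm cov}}$ for shift-covariant $\psi$, combined with stationarity (so that $\E\bigl[\mD\phi(\w,\vec{e}_i)\bigr]=0$) and the identity $\xi^{\top}A_{\hom}\xi=\|\xi\cdot(z+\chi)\|^2_{L^2_{\rm cov}}$. Where the paper deploys this directly via $\E[\chi_j(\w,\vec{e}_i)]=0$ to get a quantitative lower bound on $(v,A_{\hom}v)$, you run the same estimate by contradiction through local approximants $\phi_n$, and you additionally make the span claim self-contained by observing $A_{\hom}\xi=\E\bigl[\sum_z\w_{0,z}\,z\,b_\xi(\w,z)\bigr]$ (using $L^2_{\rm pot}\perp L^2_{\rm sol}$), a step the paper outsources to Faggionato's Lemma 4.5.
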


\begin{proof}
    First we notice that $\psi(\cdot, \vec{e}_i) \in L^1(\Omega, \P)$ for any $\psi \in L_{\mathrm{cov}}^2$ and $i=1,\ldots,d$, provided that $\E[\nu_l^{\w}(0)] < \infty$.
    Indeed, by the Cauchy-Schwarz inequality and the shift covariance \eqref{equ:shiftcov}, we observe that
  \begin{align}\label{eq:L1:est}
    \E\big[|\psi(\w, \vec{e}_i)|\big]
    &\;\leq\;
    \E\big[1/\w_l(0, \vec{e}_i)\big]^{1/2}\,
    \biggl(\E\biggl[\sum_{x,y\in\gamma_l^{\rm opt}} \w (\{x,y\}) |\psi (\tau_x \w, x-y)|^2\biggr]\biggr)^{1/2}
    \nonumber\\
    &\;\leq\;
    \sqrt{l\;|\Gamma_l|}\, \E\big[\nu_l^{\w}(0)\big]^{1/2}\,
    \Norm{\psi}{L_\mathrm{cov}^2}\, ,
  \end{align}
  where we abbreviate $\gamma_l^{\rm opt} = \gamma_l^{\rm opt} (\{ 0,\vec{e}_i \})$, recall \eqref{equ:Defwl}.
  Moreover, by adapting the argument given in \cite[Proof of Lemma~4.8]{Biskup2011review}, it follows that $\E[\psi(\w, \vec{e}_i)] = 0$ for any $\psi \in L_{\mathrm{pot}}^2$ and $i=1,\ldots,d$.
  In particular, $\E[\chi_j(\w,\vec{e}_i)] = 0$ for any $i,j = 1, \ldots, d$. 

  Now let $v \in \R^d \backslash \{0\}$. 
  Since $\E[v \cdot \chi(\w, \vec{e}_i)] = 0$, it follows that
  \begin{align*}
    (v \cdot \vec{e}_i)^2
    &\overset{\phantom{\eqref{eq:L1:est}}}{\;=\;}
    (v\cdot \vec{e}_i)\, \E\Big[\bigl(v \cdot [\vec{e}_i + \chi (\w,\vec{e}_i)]\bigr)\Big]
     \\[.5ex]
     &\overset{\eqref{eq:L1:est}}{\;\leq\;}
     |v \cdot \vec{e}_i|\, \sqrt{l\;|\Gamma_l|}\, \E\big[\nu_l^{\w}(0)\big]^{1/2}\,
     \E\,\biggl[ \sum_{z\in\Zd} \w_{0,z} \bigl( v \cdot [z + \chi(\w,z)]^2\bigr) \biggr]\,.
  \end{align*}
  Thus, by summing both sides over $i=1,\ldots,d$, we obtain
  \begin{align*}
    \sqrt{(v, A_{\mathrm{hom}} v)}
    \;=\;
    \E\,\biggl[ \sum_{z\in\Zd} \w_{0,z} \bigl( v \cdot [z + \chi(\w,z)]^2\bigr) \biggr]
    \;\geq\;
    \frac{|v|_2^2}{|v|_1}\, \,\big(l\,|\Gamma_l|\, \E\big[\nu_l^{\w}(0)\big]\big)^{-1/2}
    \;>\;
    0\, .
  \end{align*}
  Thus, the matrix $A_{\mathrm{hom}}$ is positive definite.  By following literally the proof of \cite[Lemma~4.5]{Faggionato2008} we obtain the claim.
\end{proof}

\paragraph{Bochner spaces}
We will use the concept of Bochner spaces, which are a special case of the theory outlined in \cite{Ma2002BanachMeasures}. Let $X$ be a normed space with norm $\left\Vert \cdot\right\Vert _{X}$
with the corresponding topology and Borel-$\sigma$-algebra and let
$U\subset\Rd$ be a Lebesgue-measurable set. Then, for $1\leq p<\infty$,
we define the space 
\begin{align*}
\left\Vert f\right\Vert _{L^{p}(U;X)} & :=\left(\int_{U}\left\Vert f(x)\right\Vert _{X}^{p}dx\right)^{\frac{1}{p}}\,,\\
L^{p}(U;X) & :=\left\{ f:\,U\to X\,:\,f\mbox{ is measurable and }\int_{U}\left\Vert f(x)\right\Vert _{X}^{p}\d x<\infty\right\} \,.
\end{align*}
Given a measure space $(\Omega,{\mathcal{F}},\P)$, it turns out that $L^{p}(U;L^{p}(\Omega,\P))$
and $L^{p}(U\times\Omega;\mathscr{L}\otimes\P)$ are isometrically
isomorph via the trivial identification $f(x)(\omega)=f(x,\omega)$.
Here, $\mathscr{L}$ denotes the Lebesgue measure and $\mathscr{L}\otimes\P$
denotes the product measure. While not being necessary, this notation
has proved useful in homogenization theory since the introduction
of two-scale convergence in \cite{Allaire1992}.
In particular, it gives a clear and intuitive meaning to spaces such as 
\begin{align*}
L^{2}(Q;L_{\rm cov}^{2}) & :=\left\{ \varphi:\,Q\times\Omega\times\Zd\to\R\,:\,\int_{Q} \left\| \varphi(x,\cdot, \cdot)\right\|_{L^2_{\rm cov}} \d x<\infty\,,\right.\\
 & \qquad\qquad\qquad\qquad\qquad\qquad\left.\phantom{\int_{Q}\sum_{z\in\Zd}}\,\varphi(x,\cdot,\cdot)\in L_{\rm cov}^{2}\,\mbox{ for a.e. }x\in Q\right\} 
\end{align*}
or $L^{2}(Q;L_{\rm pot}^{2})$.

If $\tilde{X}\subset X$ is a family of vectors in $X$, we denote
\[
C(\overline{Q})\otimes\tilde{X}:={\rm span}\left\{ xf\,\,:\,\,f\in C(Q)\,,\,\,x\in\tilde{X}\right\} \,.
\]
If $\tilde{X}$ is a countable dense subset of $X$, i.e. $X$ is
separable, every element of $L^{2}(Q;X)$ can be approximated by finite
sums of elements of $C(\overline{Q})\otimes\tilde{X}$ \cite{Ma2002BanachMeasures}. 

\subsection{Discrete derivatives}
\label{sec:discrder}
With the following definitions of discrete derivatives, we can write the operator $\Laeps$ in divergence form.
\begin{defn}[Discrete derivatives]
	For $u\colon\,\Zdeps\to\R$ we define the $\eps$-forward derivative in the direction $z\in \Z^d$ by
	\begin{align}
		\del^\eps_z u (x) = \eps^{-1} \left( u(x+\eps z) - u(x) \right)\, ,
	\end{align}
	and the analogous backward derivative,
	\begin{align}
		\del^{\eps -}_z u (x) = \eps^{-1} \left( u(x) - u(x-\eps z) \right)\, .
	\end{align}
	Further, we define $\nabla^\eps u(x,z):=\del^\eps_z u (x)$ and write $\nabla^\eps u(x)$ for the function that maps $z\in\Zd$ to $\nabla^\eps u(x,z)$.
	Accordingly, we define $\nabla^{\eps-}u(x,z):=\del^{\eps -}_z u (x)$ and $\nabla^{\eps-}u(x)$.
	Moreover, for a function $v:\,\Zdeps\times\Zd\to\R$ we define
	\begin{align}
	  {\rm div}^\eps v (x) = \sum_{z\in\Zd} \del^{\eps -}_z v (x,z)\, .
	\end{align}
\end{defn}
We use this notation to clearly distinguish between $\nabla^\eps$, an operator on discrete functions, and $\nabla$, an operator on the Sobolev space $H^1 \left( \R^d \right)$. A direct calculation shows that when $A_\omega^\eps$ maps $v(x,z)\mapsto \omega_{\frac x\eps,z}v(x,z)$, then
\begin{align}
 -\Laeps u^\eps=-\frac{1}{2}\,\mathrm{div}^\eps\left(A_\omega^\eps \nabla^\eps u^\eps\right)\,.
 \label{equ:DiscreteDivForm}
\end{align}
Moreover, for $v^\eps\colon\, \Zdeps\to\R$ we observe that
\begin{align}
  \langle -\Laeps u^\eps, v^\eps\rangle_\Heps = \frac{\eps^d}{2} \sum_{x\in\Zd}\sum_{z\in\Zd} \w_{x,z} \bigl(\del_z^\eps \ue (\eps x)\bigr) \bigl(\del_z^\eps v^\eps (\eps x)\bigr)\, .
  \label{eq:Laeps-strictly-positive-def}
\end{align}
When we compare the divergence form of the operator $\Laeps$ in \eqref{equ:DiscreteDivForm} with the limit operator in \eqref{eq:strong-limit-equation}, we better understand the result of Theorem \ref{thm:twoscale-strong}.
Furthermore \eqref{eq:Laeps-strictly-positive-def} implies that $\Laeps$ is strictly positive definite on any bounded domain with zero Dirichlet conditions at the boundary.

\subsection{Two-scale convergence}
\label{sec:two-scale}

We adapt the concept of stochastic two-scale convergence by Zhikov
and Piatnitsky \cite{zhikov2006homogenization} to our setting. 

We denote by $z_{i}$ the function that maps $z\in\Zd$ onto its $i$'th
coordinate and observe that, since $\E\left[\sum_{z\in\Zd} \w_{0,z}|z|^2 \right]$ is finite, $z_{i}\in L_{\rm cov}^{2}$ for $i=1,\dots,d$.

Since $L_{\rm cov}^{2}$ is separable, there exist countable sets $\Phi_{\rm sol}\subset L_{\rm sol}^{2}$
and $\Phi_{\rm pot}\subset L_{\rm pot}^{2}$ such that $\Phi:=\Phi_{\rm sol}\oplus\Phi_{\rm pot}\oplus\{z_{1},\dots,z_{d}\}\oplus\{1\}$
is dense in $L_{\rm cov}^{2}$. We can assume that every $\varphi\in\Phi_{\rm pot}$
is the gradient of a local function. Furthermore, there exists a countable
subspace $\Psi\subset C_{\rm c}^\infty (\Rd)$ such that $\Psi$ is dense both in $L^{2}(\Rd)$ and in $C_{\rm c}^\infty (\Rd)$.
We then find that $\Psi\otimes\Phi$ is dense in $L^{2}(\Rd;L_{\rm cov}^{2})$.

\begin{defn}[Typical realizations]
We denote by $\Omega_{\Phi}\subset\Omega$ the set of all $\omega\in\Omega$
such that Theorem \ref{thm:Boiv-Dep} holds
\begin{enumerate}
 \item[a)] for all $f(\omega):=\sum_{z\in\Zd}\omega_{0,z}\varphi(\omega,z)$, where $\varphi\in\Phi$,
 \item[b)] for all $f(\omega):=\sum_{z\in\Zd}\omega_{0,z}\left(\varphi_{i}\varphi_{j}\right)(\omega,z)$, where $\varphi_{i},\varphi_{j}\in\Phi$, and
 \item[c)] and for all $f(\omega):=\sum_{z\in\Zd\backslash Z}\omega_{0,z}|z|^{2}$,
  where $Z$ is a finite subset of $\Zd$,
 \item[d)] ${\rm div}(\omega b)\circ \tau_x = 2 \sum_{z}\omega_{x,z} b(\tau_x \omega,z)= 0$ for all $b\in\Phi_{sol}$ and all $x\in\Zd$.
\end{enumerate}
We call $\Omega_{\Phi}$ the set of \emph{typical realizations}.
\end{defn}

\begin{remark}
  Note that $\P (\Omega_{\Phi}) = 1$ (compare to \cite[Lemma 4.4]{Faggionato2008}).
\end{remark}

\begin{defn}[Two-scale convergence]
Let $\rmw_{\eps}:\,\eps\Z^d\times\Zd\to\R$.
We say that $\rmw_{\eps}$ \emph{converges weakly in two scales} to $\rmw\in L^{2}(\Rd;L_{\rm cov}^{2})$ if
\begin{align}
  \lim_{\eps\to 0}\eps^d \sum_{x\in \Zd} v(\eps x) \sum_{z\in\Zd} \omega_{x,z} \rmw_\eps (\eps x,z) \varphi(\tau_x\w, z) = \int_{\Rd} v(x) \E \left[ \sum_{z\in\Z^d}\w_{0,z} \rmw(x,\w,z) \varphi(\w,z) \right]\d x
\end{align}
for all $v\in C_{\rm c}^{\infty}(\Rd)$ and all $\varphi\in\Phi$.
In this case we write $\rmw_{\eps}\stackrel{2s}{\weakto}\rmw$.
\end{defn}

\begin{prop}
\label{prop:Ex-two-scale-limit-farfield}For all typical realizations
$\omega\in\Omega_{\Phi}$ it holds: If $\rmw_{\eps}:\,\eps\Z^d\times\Zd\to\R$ and $C<\infty$ are such that 
\begin{align}
  \eps^d \sum_{x\in\Zd} \sum_{z\in\Zd} \w_{x,z} \rmw^2_\eps (\eps x,z)\;\leq\; C\qquad\forall\eps>0\,,
  \label{equ:BddTwoScale}
\end{align}
then there exists a subsequence $\rmw_{\eps_{k}}$ and $\rmw\in L^{2}(\Rd;L_{\rm cov}^{2})$
such that 
\begin{align}
\rmw_{\eps_k}\stackrel{2s}{\weakto}\rmw\, .
\label{eq:two-sc-conv-1}
\end{align}
\end{prop}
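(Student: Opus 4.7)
The plan is to realize each $w_\eps$ as a bounded linear functional on a countable dense subset of the Hilbert space $L^2(\R^d;L^2_{\mathrm{cov}})$ and then extract a two-scale limit via Banach--Alaoglu plus Riesz representation.

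\textbf{Step 1: Functional reformulation.} For $\psi\in\Psi$ and $\varphi\in\Phi$ set
\[
  T_\eps(\psi\otimes\varphi) \;\ldef\; \eps^d \sum_{x\in\Zd} \psi(\eps x) \sum_{z\in\Zd} \w_{x,z}\, w_\eps(\eps x,z)\, \varphi(\tau_x\w,z),
\]
extended by linearity to $F=\sum_i \psi_i\otimes\varphi_i \in \mathrm{span}(\Psi\otimes\Phi)$. The key observation is that by Cauchy--Schwarz jointly in $(x,z)$,
\[
  |T_\eps(F)|^2 \;\leq\; \underbrace{\Bigl(\eps^d \sum_x \sum_z \w_{x,z}\, w_\eps^2(\eps x,z)\Bigr)}_{\leq\, C}\, \cdot\, \Bigl(\eps^d \sum_x \sum_z \w_{x,z}\, F(\eps x,\tau_x\w,z)^2\Bigr).
\]

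\textbf{Step 2: Uniform bound via ergodicity.} The second factor expands into a finite sum of terms of the form $\eps^d \sum_x \psi_i(\eps x)\psi_j(\eps x)\, f_{ij}(\tau_x\w)$ with $f_{ij}(\w)=\sum_z \w_{0,z}(\varphi_i\varphi_j)(\w,z)$, which lies in $L^1(\P)$ by condition (b) in the definition of $\Omega_\Phi$. Theorem \ref{thm:General-ergodic-thm} (applied to the bounded continuous function $\psi_i\psi_j$) yields, for $\w\in\Omega_\Phi$,
\[
  \eps^d \sum_x \psi_i(\eps x)\psi_j(\eps x)\, f_{ij}(\tau_x\w) \;\xrightarrow{\eps\to 0}\; \int_{\Rd}\psi_i\psi_j\,\E[f_{ij}]\d x,
\]
so the second factor converges to $\|F\|_{L^2(\Rd;L^2_{\mathrm{cov}})}^2$. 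Hence there exists $\eps_0>0$ such that
\[
  |T_\eps(F)| \;\leq\; 2\sqrt{C}\,\|F\|_{L^2(\Rd;L^2_{\mathrm{cov}})} \qquad \text{for all } 0<\eps<\eps_0.
\]

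\textbf{Step 3: Diagonal extraction.} Since $\Psi$ and $\Phi$ are countable, so is $\Psi\otimes\Phi$. For each fixed $\psi\otimes\varphi$, the sequence $T_\eps(\psi\otimes\varphi)$ is bounded in $\R$; a standard diagonal argument produces a subsequence $\eps_k\to 0$ along which $T_{\eps_k}(\psi\otimes\varphi)$ converges for every $\psi\otimes\varphi\in\Psi\otimes\Phi$. By linearity the limit, call it $T(F)$, exists on $\mathrm{span}(\Psi\otimes\Phi)$ and inherits the bound $|T(F)|\leq 2\sqrt{C}\,\|F\|_{L^2(\Rd;L^2_{\mathrm{cov}})}$.

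\textbf{Step 4: Riesz representation and identification of the limit.} Since $\mathrm{span}(\Psi\otimes\Phi)$ is dense in $L^2(\Rd;L^2_{\mathrm{cov}})$, $T$ extends by continuity to a bounded linear functional on the whole Hilbert space. Recalling that the inner product on $L^2(\Rd;L^2_{\mathrm{cov}})$ is exactly
\[
  \langle u,v\rangle \;=\; \int_{\Rd}\E\Bigl[\sum_{z\in\Zd}\w_{0,z}\,u(x,\w,z)\,v(x,\w,z)\Bigr]\d x,
\]
the Riesz representation theorem produces $\rmw\in L^2(\Rd;L^2_{\mathrm{cov}})$ with $T(F)=\langle \rmw,F\rangle$ for all $F$, which after approximating arbitrary $v\in C_{\mathrm{c}}^\infty(\Rd)$ by elements of $\Psi$ (using uniform boundedness) is precisely the statement $w_{\eps_k}\tsweak \rmw$.

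\textbf{Main obstacle.} The delicate point is Step 2: to estimate $|T_\eps(F)|$ for a general finite linear combination one must control the cross terms $\varphi_i\varphi_j$ against the weighted empirical measure $\sum_x \delta_{\tau_x\w}\otimes\w_{x,\cdot}$. This is exactly why the typical set $\Omega_\Phi$ was defined to include the ergodic theorem for the functions $\sum_z \w_{0,z}(\varphi_i\varphi_j)(\w,z)$ (condition (b)); without this closure under products of test functions, the passage from single tensors $\psi\otimes\varphi$ to the full dense span—and hence the Riesz extension—would fail.
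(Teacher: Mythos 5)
Your proposal is correct and is essentially the paper's proof: Cauchy--Schwarz with respect to the weighted counting measure $\eps^d\sum_{x}\sum_{z}\w_{x,z}\delta_{(\eps x,z)}$, the ergodic theorem applied to $\omega\mapsto\sum_z\w_{0,z}\varphi_i(\omega,z)\varphi_j(\omega,z)$ from condition (b) in the definition of $\Omega_\Phi$, a diagonal extraction over the countable family $\Psi\otimes\Phi$, and Riesz representation in $L^2(\Rd;\Lcov)$. Your Step 2 usefully spells out the cross-term estimate needed to pass from elementary tensors to general elements of ${\rm span}\{\Psi\otimes\Phi\}$, a point the paper states tersely when it asserts continuity of the limit functional $I$ directly from its bound on elementary tensors.
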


\begin{proof}
The proof goes along the lines of classical proofs of two-scale convergence
like for example in \cite{zhikov2006homogenization}, Section 5.

We observe that for every $v\in\Psi$ and $\varphi\in\Phi$ we find
\begin{align}
\limsup_{\eps\to 0}\eps^d &\left| \sum_{x\in \Zd} v(\eps x) \sum_{z\in\Zd} \omega_{x,z} \rmw_{\eps} (\eps x,z) \varphi(\tau_x\w, z) \right|\nonumber\\
&\stackrel{\eqref{equ:BddTwoScale}}{\leq} \limsup_{\eps\to 0}\sqrt{C} \left( \eps^d \sum_{x\in\Zd}\sum_{z\in\Zd} \w_{x,z} v^2(\eps x) \varphi^2(\tau_x\w,z) \right)^{1/2}
\stackrel{\eqref{equ:Boiv-Dep}}{\leq} \sqrt{C} \| v \|_{L^2} \| \varphi \|_{L^2_{\rm cov}}\, ,
\label{eq:two-scale-exist-estim-1}
\end{align}
where, in the last step, we have also used that $v$ has bounded support.
It follows that since $\Psi$ and $\Phi$ are countable, we can choose a subsequence
$\eps_{k}\to0$ as $k\to\infty$ such that the limit $I(v\varphi)$ of
\begin{align*}
  I_{\eps_k} (v\varphi)\;\ldef\; \eps_k^d \sum_{x\in \Zd} v(\eps_k x) \sum_{z\in\Zd} \omega_{x,z} \rmw_{\eps_k} (\eps_k x,z) \varphi(\tau_x\w, z) 
\end{align*}
exists for every $v\in\Psi$ and $\varphi\in\Phi$.
We notice that the functional $I(\cdot)$ is linear in $v\varphi\in\Psi\otimes\Phi$.
Furthermore, due to (\ref{eq:two-scale-exist-estim-1}), $I(\cdot)$ is continuous on ${\rm span}\left\{ \Psi\otimes\Phi\right\} $. It follows by Riesz representation theorem that we can find $\rmw\in L^{2}(Q;L_{\rm cov}^{2})$ such that
\[
I(v\varphi)\;=\;\int_{\Rd} v(x) \E \left[ \sum_{z\in\Z^d}\w_{0,z} \rmw(x,\w,z) \varphi(\w,z) \right]\, \d x\,.
\]
Since $\Psi\otimes\Phi$ is dense in $L^{2}(Q;L_{\rm cov}^{2})$, we obtain
that $\rmw$ is uniquely defined.
Since, in addition, $\Psi$ is dense in $C_{\rm c}^\infty$ we find for every $v\in C_c^{\infty}(\Rd)$ and $\varphi\in\Phi$  $I_{\eps_k} (v\varphi)\to I(v\varphi)$ as $\eps\to0$ and hence \eqref{eq:two-sc-conv-1} holds.
\end{proof}

\begin{lem}
\label{lem:BddDiscrGrad}
  For all typical realizations $\w\in\Omega_{\Phi}$ and all Lipschitz functions $v:\, \Rd\to\R$ there exists $C(\omega)\in (0,\infty]$, which depends only on $\supp v$ and $\w$, such that
  \begin{align}
      \sup_{\eps>0} \eps^d \sum_{x\in\Zd}\sum_{z\in \Zd}\omega_{x,z}\left(\del_z^\eps v(\eps x)\right)^{2}
      \;<\; C(\omega) \| \nabla v \|^2_\infty\,.
  \end{align}
  If $\supp v$ is bounded, then $C(\omega)$ is \Pas finite.
\end{lem}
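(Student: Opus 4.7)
The plan is to use the Lipschitz bound $|\del_z^\eps v(\eps x)| \leq \|\nabla v\|_\infty |z|$ to reduce the target sum to an ergodic average of the single-site second moment
\[
  f(\omega) \;\ldef\; \sum_{z \in \Zd} \omega_{0,z}\, |z|^2,
\]
which lies in $L^1(\Omega, \P)$ by Assumption \ref{ass:Environ}\ref{ass:momcond}; the Boivin--Depauw / Tempel'man ergodic theorem then delivers the bound for typical realizations.

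First I would pass from the squared discrete gradient to the plain quantity $|z|^2$ via the Lipschitz estimate: since $v(\eps(x+z)) - v(\eps x)$ has absolute value at most $\|\nabla v\|_\infty \cdot \eps |z|$, one has $(\del_z^\eps v(\eps x))^2 \leq \|\nabla v\|_\infty^2\, |z|^2$. If $\supp v$ is unbounded, this yields only a trivial bound and I set $C(\omega) := +\infty$, making the stated inequality vacuous. Otherwise, fix a compact set $K \subset \Rd$ containing $\supp v$; the finite difference $v(\eps(x+z)) - v(\eps x)$ vanishes unless $\eps x \in K$ or $\eps(x+z) \in K$. Exploiting the bond symmetry $\omega_{x,z} = \omega_{x+z,-z}$ via the substitution $y = x+z$, $w = -z$, the second alternative folds into the first, yielding
\[
  \eps^d \sum_{x \in \Zd}\sum_{z \in \Zd} \omega_{x,z}\, \bigl(\del_z^\eps v(\eps x)\bigr)^{2}
  \;\leq\;
  2\,\|\nabla v\|_\infty^2\; \eps^d \sum_{x\colon\, \eps x \in K} f(\tau_x \omega).
\]

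Next, for every typical realization $\omega \in \Omega_\Phi$, condition (c) in the definition of $\Omega_\Phi$ (taken with $Z = \emptyset$) combined with Remark \ref{rem:Tempelman} gives
\[
  \lim_{\eps \to 0}\; \eps^d \sum_{x\colon\, \eps x \in K} f(\tau_x \omega) \;=\; |K|\, \E[f] \;<\; \infty.
\]
Moreover, since $\Zd$ is countable and $f < \infty$ $\P$-a.s., for every $\omega \in \Omega_\Phi$ and every $x \in \Zd$ the value $f(\tau_x \omega)$ is finite, so each term of the sequence indexed by $\eps > 0$ is a finite sum of finite terms, hence finite. A convergent sequence of finite nonnegative reals is bounded, so its supremum over $\eps > 0$ is finite; setting $C(\omega)$ slightly larger than twice this supremum then gives the claim, with $\P$-a.s.\ finiteness when $\supp v$ is bounded.

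The main obstacle is essentially bookkeeping: the symmetry-based folding of the two boundary contributions into a single sum, and the upgrade from the almost-sure ergodic limit to a uniform-in-$\eps$ bound. Neither is genuinely hard. The analytical input that makes everything work is Assumption \ref{ass:Environ}\ref{ass:momcond}, which places $f$ in $L^1(\P)$ and thereby enables the application of the ergodic theorem to the shifted sum.
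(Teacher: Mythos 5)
Your proof is correct and follows essentially the same route as the paper's: bound $(\del_z^\eps v)^2$ by $\|\nabla v\|_\infty^2 |z|^2$, restrict the $x$-sum to $\eps^{-1}\supp v$ and its $z$-shift, fold the shifted piece back onto $\eps^{-1}\supp v$ via the bond symmetry $\omega_{x,z}=\omega_{x+z,-z}$, and then invoke the Boivin--Depauw/Tempel'man ergodic theorem for $f(\omega)=\sum_z\omega_{0,z}|z|^2$. The only (harmless) cosmetic difference is that you spell out explicitly why a net of finite nonnegative reals with a finite ergodic limit has a finite supremum, and you add a factor of $2$ where the paper leaves it implicit.
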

\begin{proof}
We observe that we can interchange the order of the sums and estimate
\begin{align*}
  \eps^d \sum_{z\in \Zd}\sum_{x\in\Zd}&\omega_{x,z}\left(\del_z^\eps v(\eps x)\right)^{2}
  \;\leq\; \eps^d\, \| \nabla v \|^2_\infty\sum_{z\in \Zd}\sum_{x\in\eps^{-1} (\supp v \cup (\supp v - \eps z))} \w_{x,z} |z|^2\\
  &\;\leq\; \eps^d\, \| \nabla v \|^2_\infty\sum_{x\in\eps^{-1} \supp v} \sum_{z\in \Zd} \mspace{-6mu}\w_{x,z} |z|^2 \;+\; \eps^d\, \| \nabla v \|^2_\infty\sum_{z\in \Zd}\sum_{x\in(\eps^{-1}\supp v) - z} \w_{x,z} |z|^2
\end{align*}
The first term on the above RHS is finite by virtue of the ergodic theorem. This also holds for the second term after an index shift in $x$ and a rearrangement of the two sums.
\end{proof}

\begin{lem}
\label{lem:TestWithGradient}
Let $Q_\eps = Q\cap \eps\Z^d$.
For all typical realizations $\omega\in\Omega_{\Phi}$ it holds:
\begin{align}
  \limsup_{\eps\to 0}\, \eps^d \sum_{x\in \Z^d}\sum_{z\in \Zd}\omega_{x,z}\left(\del^\eps_z v(\eps x)-\nabla v(\eps x)\cdot z\right)^{2} = 0
  \quad\text{for all }v\in C_{\rm c}^\infty (\Rd)\, .
  \label{equ:TestWithGradient}
\end{align}
\end{lem}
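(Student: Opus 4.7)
The plan is to split the sum over $z$ at a threshold $N\in\mathbb{N}$, handling $|z|\le N$ by Taylor's theorem and $|z|>N$ by the Lipschitz bound together with the moment condition in Assumption \ref{ass:Environ}\ref{ass:momcond}; the limits will be taken in the order $\eps\to 0$, then $N\to\infty$. Fix a bounded open set $K$ with $\supp v\subset K$ and $\mathrm{dist}(\supp v,\,\Rd\setminus K)\ge\rho>0$. Since $v$ and $\nabla v$ vanish outside $\supp v$, each summand of \eqref{equ:TestWithGradient} is zero unless $\eps x\in\supp v$ or $\eps(x+z)\in\supp v$.

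\textbf{Short-range piece.} For $\eps$ so small that $\eps N<\rho$, the summands with $|z|\le N$ force $\eps x\in K$. Taylor's theorem for $v\in C^2$ yields
\begin{equation*}
  \bigl(\del_z^\eps v(\eps x)-\nabla v(\eps x)\cdot z\bigr)^2
  \;\le\;\tfrac14\|\nabla^2 v\|_\infty^2\,\eps^2|z|^4
  \;\le\;\tfrac14\|\nabla^2 v\|_\infty^2\,\eps^2N^2|z|^2\, ,
\end{equation*}
so the $|z|\le N$ part of the sum is dominated by $\tfrac14\|\nabla^2 v\|_\infty^2\,\eps^2N^2\cdot\eps^d\sum_{\eps x\in K}\sum_z\w_{x,z}|z|^2$. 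Because $\w\mapsto\sum_z\w_{0,z}|z|^2$ lies in $L^1(\P)$ by Assumption \ref{ass:Environ}\ref{ass:momcond}, the Tempel'man form of the ergodic theorem (Remark \ref{rem:Tempelman}, applied to the indicator of $K$ after rescaling into $Q$ if needed) forces the ergodic average to converge to $|K|\,\E[\sum_z\w_{0,z}|z|^2]<\infty$ for $\w\in\Omega_{\Phi}$. Hence the whole short-range piece is $O(\eps^2)\to 0$.

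\textbf{Long-range piece and conclusion.} Since $v\in C_{\rm c}^\infty(\Rd)$ is globally Lipschitz with constant $\|\nabla v\|_\infty$, one has the uniform bound $(\del_z^\eps v(\eps x)-\nabla v(\eps x)\cdot z)^2\le 4\|\nabla v\|_\infty^2|z|^2$. Define $f_N(\w):=\sum_{|z|>N}\w_{0,z}|z|^2$, which belongs to $L^1(\P)$ by the same moment assumption. The contribution with $\eps x\in\supp v$ is bounded by $4\|\nabla v\|_\infty^2\cdot\eps^d\sum_{\eps x\in\supp v}f_N(\tau_x\w)$; the complementary contribution, in which $\eps x\notin\supp v$ but $\eps(x+z)\in\supp v$, is reduced to the same form via the change of variables $y:=x+z$, $z':=-z$ together with the conductance symmetry $\w_{x,z}=\w_{x+z,-z}$. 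Condition (c) in the definition of $\Omega_{\Phi}$, with $Z=\{z\in\Zd:|z|\le N\}$, provides the ergodic theorem for $f_N$ and yields $\limsup_{\eps\to0}(\text{long-range piece})\le 8\|\nabla v\|_\infty^2\,|\supp v|\,\E[f_N]$. Sending $N\to\infty$ and invoking dominated convergence gives $\E[f_N]\to 0$, which combined with the short-range estimate proves \eqref{equ:TestWithGradient}.

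\textbf{Main difficulty.} The Lipschitz bound alone would produce a non-vanishing limit of order $\E[\sum_z\w_{0,z}|z|^2]$; the cancellation that Taylor's theorem provides at short range is therefore essential. At long range, the technical hurdle is that $\eps x$ may lie arbitrarily far from $\supp v$, which is circumvented by exploiting the conductance symmetry $\w_{x,z}=\w_{x+z,-z}$ to reduce the sum back to an $\eps$-independent support region where the ergodic theorem can be applied.
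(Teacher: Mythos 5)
Your proof is correct and follows essentially the same route as the paper's: you split the $z$-sum at a finite threshold, treat the short-range part by Taylor expansion (the paper instead invokes uniform convergence of the difference quotient, but for a finite set of $z$'s this is the same thing), and control the long-range tail by the Lipschitz bound and the moment assumption, closing with the ergodic theorem on the compactly supported indicator and a $\delta/N\to\infty$ argument. The only cosmetic difference is that the paper splits the $x$-range into $\eps^{-1}\supp v$ and $(\eps^{-1}\supp v)-z$ and rearranges, whereas you package the same observation as a change of variables $y=x+z$, $z'=-z$ using the symmetry $\w_{x,z}=\w_{x+z,-z}$; both reduce the sum back to a fixed bounded region where the ergodic theorem (condition (c) in the definition of $\Omega_\Phi$) applies.
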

\begin{proof}
Let $\delta>0$.
Since $\E\left[ \sum_{z\in\Zd}\omega_{0,z}\left|z\right|^{2}\right]<\infty$, we can choose a finite point-symmetric subset $Z_{\delta}\subset\Zd$ such that
\begin{align*}
  \E\left[ \sum_{z\in\Zd\backslash Z_{\delta}}\omega_{0,z}\left|z\right|^{2}\right] 
  \;<\;\delta\, .
\end{align*}
Then we split the sum in \eqref{equ:TestWithGradient} into a sum over $z\in Z_\delta$ and a sum over $z\notin Z_\delta$.

For $z\in Z_\delta$ we observe that, since $v\in C_{\rm c}^\infty (\Rd)$, we have
\begin{align}
  \frac{v(x+\eps z)-v(x)}{\eps}-\nabla v (x)\cdot z\to0
  \label{equ:uniformTaylor}
\end{align}
uniformly in $x\in\Rd$.
Further, we observe that there exists $\eps^\ast>0$ such that for $z\in Z_\delta$ and for all $\eps<\eps^\ast$, the statement $\eps x\notin 2 \supp v$ implies that $\eps x+\eps z\notin \supp v$.
It follows that for $\eps$ small enough, we have
\begin{align*}
  &\eps^d \sum_{x\in \Z^d}\sum_{z\in Z_{\delta}}\omega_{x,z}\left(\frac{v(\eps x+\eps z)-v(\eps x)}{\eps}-\nabla v(\eps x)\cdot z\right)^{2}\\
  &\qquad\leq \eps^d \sum_{x\in 2\eps^{-1}\supp v}\sum_{z\in Z_{\delta}}\omega_{x,z}\left(\frac{v(\eps x+\eps z)-v(\eps x)}{\eps}-\nabla v (\eps x)\cdot z\right)^{2}\, .
\end{align*}
This together with \eqref{equ:uniformTaylor} and the ergodic theorem implies that
\begin{align*}
  \eps^d \sum_{x\in\Zd}\sum_{z\in Z_{\delta}}\omega_{x,z}\left(\frac{v(\eps x+\eps z)-v(\eps x)}{\eps}-\nabla v(\eps x)\cdot z\right)^{2}
  \to 0\qquad\text{as }\eps\to 0\, .
\end{align*}

Let us now consider the case $z\notin Z_\delta$.
As in the proof of Lemma \ref{lem:BddDiscrGrad}, we interchange the sums 
and observe that
\begin{align*}
  \eps^d \sum_{x\in\Zd}\sum_{z\notin Z_\delta} &\omega_{x,z}\left(\frac{v(\eps x+\eps z)-v(\eps x)}{\eps}-\nabla v(\eps x)\cdot z\right)^{2}\\
  &\qquad\qquad\leq 4\| \nabla v \|_\infty^2  \eps^d \sum_{z\notin Z_\delta} \sum_{x\in\eps^{-1} (\supp v \cup \supp v - \eps z)}\omega_{x,z} |z|^2\\
  &\qquad\qquad\leq 4\| \nabla v \|_\infty^2  \eps^d \sum_{x\in\eps^{-1} \supp v }\sum_{z\notin Z_\delta}\left( \omega_{x,z} |z|^2 + \omega_{x,-z} |z|^2 \right)
\end{align*}
By the ergodic theorem and the choice of $Z_\delta$ it follows that the limit superior of the above RHS is bounded from above by a constant times $\delta |\supp v|$, which we can choose arbitrarily small.
\end{proof}

\begin{cor}[of Lemma \ref{lem:TestWithGradient}]
\label{cor:TestWithGradient}
For all typical realizations $\omega\in\Omega_{\Phi}$ it holds:
If $\rmw_\eps \stackrel{\rm 2s}{\weakto} \rmw$, then
\begin{align}
\lim_{\eps\to 0}\eps^d \sum_{x\in \Zd} \sum_{z\in\Zd}\w_{x,z} \rmw_{\eps}(\eps x,z) \del_z^\eps v(\eps x) = \int_{\Rd} \E\left[\sum_{z\in\Zd}\w_{0,z}\rmw(x,\w,z)\left(\nabla v (x)\cdot z\right) \right]\, \d x
\label{eq:two-sc-conv-2}
\end{align}
for all $v\in  C_{\rm c}^\infty (\Rd)$.
\end{cor}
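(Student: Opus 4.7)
The plan is to decompose the discrete derivative $\del_z^\eps v(\eps x)$ into its first-order Taylor part $\nabla v(\eps x) \cdot z$ plus a remainder, handle the first piece via the definition of two-scale convergence, and kill the remainder using Lemma \ref{lem:TestWithGradient}. Explicitly, I would write
\begin{align*}
  \eps^d \sum_{x,z} \w_{x,z}\, \rmw_\eps(\eps x,z)\, \del_z^\eps v(\eps x)
  \;&=\; \eps^d \sum_{x,z} \w_{x,z}\, \rmw_\eps(\eps x,z)\, \bigl(\nabla v(\eps x)\cdot z\bigr) \\
  &\quad + \eps^d \sum_{x,z} \w_{x,z}\, \rmw_\eps(\eps x,z)\, \bigl(\del_z^\eps v(\eps x) - \nabla v(\eps x)\cdot z\bigr),
\end{align*}
and treat the two terms separately.

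For the main term I would expand $\nabla v(\eps x)\cdot z = \sum_{i=1}^d (\partial_i v)(\eps x)\, z_i$. Since the coordinate functions $z_1,\ldots,z_d$ lie in $\Phi$ by construction, and $\partial_i v \in C_{\rm c}^\infty(\Rd)$ for each $i$, the definition of weak two-scale convergence applies directly to each summand, yielding
\begin{align*}
  \lim_{\eps\to 0}\, \eps^d \sum_{x,z} \w_{x,z}\, \rmw_\eps(\eps x,z)\, \bigl(\nabla v(\eps x)\cdot z\bigr)
  \;=\; \int_{\Rd} \E\!\left[\sum_{z\in\Zd} \w_{0,z}\, \rmw(x,\w,z)\, \bigl(\nabla v(x)\cdot z\bigr)\right] \d x,
\end{align*}
which is the right-hand side of \eqref{eq:two-sc-conv-2}.

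For the remainder term I would apply the Cauchy--Schwarz inequality in the $\w$-weighted inner product on $\eps\Zd\times\Zd$:
\begin{align*}
  \left| \eps^d \sum_{x,z} \w_{x,z}\, \rmw_\eps(\eps x,z)\, r_\eps(x,z) \right|^2
  \;\leq\; \Bigl(\eps^d \sum_{x,z} \w_{x,z}\, \rmw_\eps^2(\eps x,z)\Bigr)\, \Bigl(\eps^d \sum_{x,z} \w_{x,z}\, r_\eps^2(x,z)\Bigr),
\end{align*}
where $r_\eps(x,z) \ldef \del_z^\eps v(\eps x) - \nabla v(\eps x)\cdot z$. By Lemma \ref{lem:TestWithGradient}, the second factor tends to $0$ as $\eps\to 0$. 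The first factor is bounded uniformly in $\eps$ because the two-scale convergence $\rmw_\eps\tsweak\rmw$ together with a uniform boundedness argument (or, more directly, the hypothesis \eqref{equ:BddTwoScale} under which two-scale convergent subsequences are constructed in Proposition \ref{prop:Ex-two-scale-limit-farfield}) ensures $\sup_\eps \eps^d\sum_{x,z}\w_{x,z}\rmw_\eps^2(\eps x,z) < \infty$. Hence the remainder contribution vanishes in the limit, and combining both parts yields the claim.

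The only subtle point, and the one I would take most care with, is the uniform bound $\sup_\eps \eps^d\sum_{x,z}\w_{x,z}\rmw_\eps^2(\eps x,z) < \infty$ implicit in the use of Cauchy--Schwarz: it is not explicitly part of the definition of two-scale convergence as written, but it is standard that it follows from the weak two-scale convergence via the Banach--Steinhaus theorem applied to the dense family $\Psi\otimes\Phi$. In practice the corollary is invoked on sequences arising from Proposition \ref{prop:Ex-two-scale-limit-farfield}, where the bound is part of the hypothesis and no extra argument is needed.
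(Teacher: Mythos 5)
Your proposal is essentially the paper's proof: the same decomposition of $\del_z^\eps v(\eps x)$ into $\nabla v(\eps x)\cdot z$ plus a remainder, the same coordinate-wise application of the two-scale limit using $z_i\in\Phi$ and $\partial_i v\in C_{\rm c}^\infty(\Rd)$, and the same Cauchy--Schwarz plus Lemma \ref{lem:TestWithGradient} argument for the remainder. Your observation that the uniform weighted-$\ell^2$ bound on $\rmw_\eps$ is not literally part of the definition is well taken (the paper just writes ``bounded by assumption''), and the right resolution is indeed your second suggestion --- it is supplied as hypothesis \eqref{equ:BddTwoScale} whenever the corollary is invoked --- rather than the Banach--Steinhaus aside, which would require recovering the $\w$-weighted $\ell^2$-norm of $\rmw_\eps$ on $\eps\Zd\times\Zd$ as an operator norm against $\Psi\otimes\Phi$, and that is not obvious in this discrete two-scale setting.
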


\begin{proof}
First we observe that $z_{i}\in\Phi$ for $i=1,\dots,d$ and
$\partial_{\vec{e}_i}v\in C_{\rm c}^\infty (\Rd)$ where the $\vec{e}_i$, $i=1,\ldots,d$, denote the unit base vectors of $\Rd$.
Therefore the assumption that $\rmw_\eps \stackrel{\rm 2s}{\weakto} \rmw$ implies that
\begin{align*}
\lim_{\eps\to 0}\eps^d \sum_{x\in \Zd} \del_{\vec{e}_i} v(\eps x)\sum_{z\in\Zd}\w_{x,z} z_i \rmw_{\eps} (\eps x,z) =\int_{\Rd} \del_{\vec{e}_i} v(x) \E\left[\sum_{z\in\Zd} \w_{0,z} z_i \rmw(x,\w,z) \right]\d x\,,
\end{align*}
where the $\vec{e}_i$, $i=1,\ldots,d$, denote the unit base vectors of $\Rd$.
It follows that
\begin{align*}
\lim_{\eps\to 0}\eps^d \sum_{x\in \Zd} \nabla v(\eps x) \cdot \sum_{z\in\Zd}\w_{x,z} z \rmw_{\eps} (\eps x,z) =\int_{\Rd} \nabla v(x)\cdot \E\left[\sum_{z\in\Zd} \w_{0,z} z \rmw(x,\w,z) \right]\d x\,
\end{align*}
for all $v\in C_{\rm c}^\infty (\Rd)$.
In order to prove (\ref{eq:two-sc-conv-2}), it thus remains to show that
\begin{align*}
  \lim_{\eps\to 0}\left|\eps^d \sum_{x\in \Zd}  \sum_{z\in\Zd}\w_{x,z} \left(\del_z^\eps v (\eps x)-\nabla v(\eps x) \cdot z\right) \rmw_{\eps} (\eps x,z)\right| \to 0\, .
\end{align*}
This follows from Cauchy-Schwarz, i.e.,
\begin{align*}
  &\left|\eps^d \sum_{x\in \Zd}  \sum_{z\in\Zd}\w_{x,z} \left(\del_z^\eps v (\eps x)-\nabla v(\eps x) \cdot z\right) \rmw_{\eps} (\eps x,z)\right|\\
  &\qquad\qquad\leq \left(\eps^d \sum_{x\in\Zd} \sum_{z\in\Zd} \w_{x,z} \rmw^2_\eps (\eps x,z)\right)^{1/2} \left(\eps^d \sum_{x\in \Z^d}\sum_{z\in \Zd}\omega_{x,z}\left(\del^\eps_z v(\eps x)-\nabla v(\eps x)\cdot z\right)^{2}\right)^{1/2}\, .
\end{align*}
The first factor on the RHS is bounded by assumption and the second factor converges to zero by virtue of Lemma \ref{lem:TestWithGradient}.
\end{proof}

\subsection{Convergence of gradients}
Let us start with the following auxiliary lemma
\begin{lem}
\label{lem:bsolOrthGrad}
  For all $\omega\in\Omega_\Phi$ and all $b\in\Phi_{\rm sol}$ the following is true:
  \begin{align}
    \sum_{x\in \Zd} \sum_{z\in\Zd}\partial_{z}^{\eps}v(\eps x)\,\omega_{x,z}b(\tau_x\omega, z)=0 \qquad\text{for all }v\in \ell^\infty (\Zdeps)\text{ with bounded support}\,.
    \label{equ:bsolOrthGrad}
  \end{align}
\end{lem}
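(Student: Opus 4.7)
The strategy is a discrete ``integration by parts''. The only nontrivial input is the pointwise divergence-free condition $\sum_{z}\omega_{x,z} b(\tau_{x}\omega,z)=0$ for all $x\in\Zd$, which is exactly condition (d) in the definition of the typical set $\Omega_\Phi$. Once we rearrange the double sum so that this identity can be invoked, the result falls out immediately.

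Concretely, I will expand $\partial_{z}^{\eps}v(\eps x)=\eps^{-1}(v(\eps x+\eps z)-v(\eps x))$ and split the double sum in \eqref{equ:bsolOrthGrad} into two pieces. The ``$v(\eps x)$'' piece, after interchanging the order of summation, reads $\eps^{-1}\sum_{x}v(\eps x)\sum_{z}\omega_{x,z}b(\tau_{x}\omega,z)$ and vanishes termwise by condition (d) of $\Omega_\Phi$. For the ``$v(\eps x+\eps z)$'' piece I will perform the substitution $y=x+z$ and use the two symmetry relations:
\begin{align*}
\omega_{y-z,z}\;=\;\omega_{y,-z}\qquad\text{(edge symmetry)},\qquad
b(\tau_{y-z}\omega,z)\;=\;-b(\tau_{y}\omega,-z)\qquad\text{(from \eqref{equ:shiftcov2})}.
\end{align*}
After these substitutions and relabelling $z\mapsto -z$, the piece becomes $-\eps^{-1}\sum_{y}v(\eps y)\sum_{z}\omega_{y,z}b(\tau_{y}\omega,z)$, which again vanishes by condition (d). Summing the two pieces gives zero.

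The only delicate point is justifying the rearrangement of the double sums, which requires absolute summability. This is where I expect the bookkeeping to be slightly awkward. Since $v\in\ell^{\infty}(\Zdeps)$ has bounded support, the outer sum in $x$ (respectively $y$ after substitution) is effectively over a finite set. For each such fixed base point $y$, Cauchy-Schwarz yields
\begin{align*}
\sum_{z\in\Zd}\omega_{y,z}\,|b(\tau_{y}\omega,z)|
\;\leq\;
\Bigl(\sum_{z}\omega_{y,z}\Bigr)^{\!1/2}\Bigl(\sum_{z}\omega_{y,z}\,b(\tau_{y}\omega,z)^{2}\Bigr)^{\!1/2},
\end{align*}
and both factors are \Pas finite on $\Omega_\Phi$: the first by Assumption \ref{ass:Environ}\ref{ass:momcond} (using $\omega_{y,z}\leq\omega_{y,z}|z|^2$ since $z\neq 0$), the second because $\E[\sum_z\omega_{0,z}b(\omega,z)^2]=\|b\|_{L^2_{\rm cov}}^2<\infty$ and this quantity evaluated at the shifted configuration $\tau_{y}\omega$ is \Pas finite for the finitely many relevant $y$. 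This legitimizes the Fubini/substitution steps and completes the argument.
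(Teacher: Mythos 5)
Your proof is correct and follows essentially the same route as the paper: split $\partial_z^\eps v$ into its two summands, kill the "$v(\eps x)$" term directly by condition (d) of $\Omega_\Phi$, and transform the "$v(\eps x+\eps z)$" term via the substitution $x\rightsquigarrow x-z$ together with the edge symmetry $\w_{x-z,z}=\w_{x,-z}$ and the shift-covariance identity \eqref{equ:shiftcov2} into the negative of the first. The only difference is that you spell out the Cauchy--Schwarz justification for absolute convergence, which the paper merely asserts; this is a useful addition, and the nonnegativity of the absolute-value series makes the Fubini/substitution step legitimate exactly as you intend.
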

\begin{proof}
We write the LHS of \eqref{equ:bsolOrthGrad} as
\begin{align*}
  \eps^{-1}\sum_{x\in \Zd} \sum_{z\in \Zd} v(\eps x + \eps z) \w_{x,z} b(\tau_x\w, z) -\eps^{-1}\sum_{x\in \Zd} v(\eps x) \sum_{z\in \Zd}  \w_{x,z} b(\tau_x\w, z) 
\end{align*}
The second term is immediately zero since $\omega\in\Omega_\Phi$ and ${\rm div} (\omega b)\circ\tau_x = 2 \sum_{z\in\Zd} \omega_{x,z} b(\tau_x\w, z)$ by \eqref{eq:divergence}.
The first term is absolutely convergent and thus we can interchange the sums.
After an additional index shift $x\rightsquigarrow x-z$, we obtain that the above first term is equal to
\begin{align*}
  \eps^{-1}\sum_{x\in \Zd} v(\eps x)\sum_{z\in \Zd}  \w_{x-z,z} b(\tau_{x-z}\w, z)
  & = -\eps^{-1}\sum_{x\in \Zd} v(\eps x)\sum_{z\in \Zd}  \w_{x,-z} b(\tau_{x}\w, -z)\,,
\end{align*}
where we have used \eqref{equ:shiftcov2} as well as the symmetry of the conductances, i.e., $\w_{x-z,z}=\w_{x,-z}$.
The claim follows from \eqref{eq:divergence} and $b\in\Phi_{\rm sol}$ since $\omega\in\Omega_\Phi$.
\end{proof}

We can now prove the convergence of gradients. Our result is the natural transfer to the corresponding original result by Nguetseng \cite{nguetseng1989general} to the present setting.
\begin{lem}[Two-scale convergence for gradients]
\label{lem:two-scale-gradient}
For all $\omega\in\Omega_{\Phi}$ such that the Poincar\'e-inequality \eqref{equ:PI} holds uniformly in $\eps$ also the following holds true. If $\ue:\,\Zdeps\to\R$ is a family of functions
with ${\rm supp}(\ue)\subseteq Q\cap \Zdeps$ for all $\eps$ and 
\begin{align}
\sup_{\eps>0}\left( \eps^d \sum_{x\in\Zd} \sum_{z\in\Zd} \w_{x,z} \left( \del_z^\eps u^\eps (\eps x)\right)^2+\left\Vert u^{\eps}\right\Vert _{\infty}\right)
\;<\;\infty\,,
\label{cond:lem:two-scale-gradient}
\end{align}
then there exists a subsequence $u^{\eps'}$, $u\in H_{0}^{1}(Q)$
and $\cgu\in L^{2}(\Rd;L_{\rm pot}^{2})$ such that
\begin{equation}
\mathcal{R}^\ast_{\eps'} u^{\eps'}\weakto u\text{ in }L^2(\Rd)\,,\qquad\del_z^{\eps'}u^{\eps'}(x)\stackrel{2s}{\weakto}\nabla u(x)\cdot z+\cgu(x,\omega,z)\quad\text{as }\eps'\to0\,.
\label{eq:lem:two-scale-gradient}
\end{equation}
Furthermore, if the compact embedding of Lemma \ref{lem:precompact} holds, we find $\mathcal{R}^\ast_{\eps'} u^{\eps'}\to u$ strongly in $L^2(\Rd)$.
\end{lem}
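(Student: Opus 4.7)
The plan is to apply the stochastic Nguetseng scheme: extract weak and two-scale limits, then identify the gradient decomposition by testing against solenoidal generators. First, the uniform Poincar\'e inequality \eqref{equ:PI} together with \eqref{cond:lem:two-scale-gradient} forces $\|\Repsadj u^\eps\|_{L^2(\Rd)}=\|u^\eps\|_{\Heps}$ to be uniformly bounded, so by Banach--Alaoglu I extract a subsequence with $\Repsadj u^\eps \weakto u$ weakly in $L^2(\Rd)$ for some $u\in L^2(\Rd)$ with $\supp u\subseteq \overline Q$. Since $\sup_\eps\eps^d\sum_{x,z}\w_{x,z}(\del_z^\eps u^\eps)^2$ is bounded, Proposition~\ref{prop:Ex-two-scale-limit-farfield} yields a further subsequence with $\del_z^\eps u^\eps\tsweak \xi$ for some $\xi\in L^2(\Rd;L^2_{\rm cov})$.

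To identify $\xi(x,\omega,z)=\nabla u(x)\cdot z+\cgu(x,\omega,z)$ with $\cgu\in L^2(\Rd;L^2_{\rm pot})$, I would test against $v(x)\,b(\omega,z)$ for arbitrary $v\in C_{\rm c}^{\infty}(\Rd)$ and $b\in\Phi_{\rm sol}$. By definition of two-scale convergence,
\[
I_\eps \;:=\; \eps^d\sum_x v(\eps x)\sum_z \w_{x,z}\,\del_z^\eps u^\eps(\eps x)\,b(\tau_x\omega,z) \;\longrightarrow\; \int_{\Rd} v(x)\,\E\Big[\sum_z\w_{0,z}\,\xi(x,\omega,z)\,b(\omega,z)\Big]\d x.
\]
On the other hand, the index shift $x\mapsto x+z$ combined with the symmetry $\w_{x,z}=\w_{x+z,-z}$, the covariance \eqref{equ:shiftcov2}, and the cancellation $\sum_z\w_{x,z}\,b(\tau_x\omega,z)=0$ (which is part of the definition of $\Omega_\Phi$ and underlies Lemma~\ref{lem:bsolOrthGrad}) rearranges $I_\eps$ into the discrete integration-by-parts form
\[
I_\eps \;=\; -\eps^d\sum_x u^\eps(\eps x)\sum_z \w_{x,z}\,\del_z^\eps v(\eps x)\,b(\tau_x\omega,z).
\]
Replacing $\del_z^\eps v(\eps x)$ by $\nabla v(\eps x)\cdot z$ via Lemma~\ref{lem:TestWithGradient}, controlling the remainder by Cauchy--Schwarz and the ergodic-theorem bound on $\eps^d\sum\sum\w b^2$, we reduce to studying
\[
-\eps^d\sum_x u^\eps(\eps x)\,\nabla v(\eps x)\cdot F(\tau_x\omega)+o(1),\qquad F(\omega):=\sum_z z\,\w_{0,z}\,b(\omega,z)\in L^1(\P)^d.
\]

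The main obstacle is the passage to the limit in this last expression: pairing the merely weakly convergent $\Repsadj u^\eps$ with the oscillating ergodic factor $F(\tau_{\cdot/\eps}\omega)$ does not yield the product of the limits in general (as an explicit construction with correlated random signs shows). This is exactly where the compact embedding of Lemma~\ref{lem:precompact} is indispensable: under its hypotheses $\Repsadj u^\eps\to u$ strongly in $L^2(\Rd)$, and since the countable set $\Phi_{\rm sol}$ can be chosen to consist of bounded functions, $F$ is bounded as well, so weak-times-strong combined with Tempel'man's ergodic theorem gives $\eps^d\sum_x u^\eps(\eps x)\,\nabla v(\eps x)\cdot F(\tau_x\omega)\to \int u\,\nabla v\cdot\E[F]\d x$. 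Equating the two expressions for $\lim I_\eps$ produces
\[
\int_{\Rd} v(x)\,\E\Big[\sum_z\w_{0,z}\,\xi\,b\Big]\d x \;=\; -\int_{\Rd} u(x)\,\nabla v(x)\cdot\E[F]\,\d x\qquad\forall\,v\in C_{\rm c}^{\infty}(\Rd),\ b\in\Phi_{\rm sol}.
\]
By Lemma~\ref{lem:non-degeneracy}, the vectors $\E[F]$ span $\Rd$ as $b$ ranges over $\Phi_{\rm sol}$, so $u$ possesses a weak $L^2$-gradient and $u\in H^1(\Rd)$; since $\supp u\subseteq \overline Q$ and $Q$ is Lipschitz we obtain $u\in H_0^1(Q)$. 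Setting $\cgu:=\xi-\nabla u\cdot z$, the identity above reads $\E[\sum_z\w_{0,z}\,\cgu\,b]=0$ for a.e.\ $x$ and all $b\in\Phi_{\rm sol}$; by density $\cgu(x,\cdot,\cdot)\perp L^2_{\rm sol}$ in $L^2_{\rm cov}$ a.e., hence $\cgu\in L^2(\Rd;L^2_{\rm pot})$. The final strong-convergence claim is a direct application of Lemma~\ref{lem:precompact} to the same subsequence.
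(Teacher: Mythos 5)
Your overall strategy is the same as the paper's: extract a weak $L^2$-limit $u$ and a two-scale limit $\xi$ of $\del_z^\eps u^\eps$ via Proposition \ref{prop:Ex-two-scale-limit-farfield}, test against $v\otimes b$ with $b\in\Phi_{\rm sol}$, manipulate via Lemma \ref{lem:bsolOrthGrad} (index shift, $\w_{x,z}=\w_{x+z,-z}$, covariance), replace $\del_z^\eps v$ by $\nabla v\cdot z$ via Lemma \ref{lem:TestWithGradient}, and then identify $u\in H^1_0(Q)$ and $\cgu\in L^2(\Rd;L^2_{\rm pot})$ using Lemma \ref{lem:non-degeneracy} and orthogonality. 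You also correctly flag the passage to the limit in the cross term $-\eps^d\sum_x u^\eps(\eps x)\nabla v(\eps x)\cdot F(\tau_x\w)$ as the crux of the argument.

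However, the way you resolve that crux contains a genuine gap. You claim that $F(\w)=\sum_z z\,\w_{0,z}\,b(\w,z)$ is bounded because $\Phi_{\rm sol}$ can be taken to consist of bounded $b$. Neither part is justified: it is not established anywhere (nor is it generally true) that $L^2_{\rm sol}$ admits a dense set of bounded elements, and even if $b$ were bounded, $|F(\w)|\leq\|b\|_\infty\sum_z|z|\,\w_{0,z}$ is an unbounded random variable under Assumption \ref{ass:Environ}\ref{ass:momcond}; only $F\in L^1(\P)^d$ holds (via Cauchy--Schwarz, as you note earlier). Consequently, your invocation of ``weak-times-strong combined with Tempel'man's ergodic theorem'' does not close: strong $L^2$-convergence of $\Repsadj u^\eps\nabla v$ would need to pair against weak-$*$ $L^\infty$-convergence (requiring $F\in L^\infty$) or weak $L^2$-convergence (requiring $F\in L^2$) of $F(\tau_{\cdot/\eps}\w)$, and neither hypothesis is available. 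The paper handles precisely this difficulty by extracting a further subsequence along which $\Repsadj u^\eps\to u$ pointwise a.e.\ (from the strong $L^2$-convergence), and then invoking the generalized ergodic Theorem \ref{thm:General-ergodic-thm}, whose hypotheses are pointwise a.e.\ convergence together with the uniform bound $\sup_\eps\|u^\eps\|_\infty<\infty$ from \eqref{cond:lem:two-scale-gradient}. The $L^\infty$-bound on $u^\eps$ is exactly what compensates for $F$ being merely $L^1(\P)$; your proof has this bound at hand but never deploys it, and the argument does not go through without it.
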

\begin{proof}
Condition \eqref{cond:lem:two-scale-gradient} together with Lemma \ref{lem:precompact} implies that there exists a subsequence, which we still index by $\eps$, and $u\in L^2 (Q)$ such that $\Repsadj u^{\eps}\to u$ in $L^2(\Rd)$.
It remains to show that $u\in H_0^1 (Q)$ and to proof the second statement in \eqref{eq:lem:two-scale-gradient}.

By virtue of Proposition \ref{prop:Ex-two-scale-limit-farfield}, Condition \eqref{cond:lem:two-scale-gradient} further implies that there exists a subsequence, which we still index by $\eps\to0$, and $\rmw\in L^{2}(\Rd;L_{\rm cov}^{2})$ such that $\nabla^{\eps}u^{\eps}\stackrel{2s}{\weakto}\rmw$ in the two-scale sense.
We choose $b\in\Phi_{\rm sol}$ and $v\in C_{\rm c}^\infty (\Rd)$ and apply \eqref{equ:bsolOrthGrad} to the discrete product rule
\begin{align*}
\del_z^\eps (v\ue) (\eps x) &= v(\eps x) \del^\eps_z \ue (\eps x) + \ue (\eps x + \eps z) \del_z^\eps v(\eps x)
\end{align*}
to obtain that
\begin{align}
0 & =\eps^d \sum_{x\in\Zd} \sum_{z\in\Zd} \w_{x,z} \Bigl( v(\eps x) \del^\eps_z \ue (\eps x) + \ue (\eps x + \eps z) \del_z^\eps v(\eps x)\Bigr) b(\tau_x\w, z)\, .
\label{eq:wo-scale-gradient-help-1}
\end{align}

For the first term on the RHS of \eqref{eq:wo-scale-gradient-help-1}, we obtain from the two-scale convergence of $\nabla^{\eps}u^{\eps}$ that 
\begin{align}
\eps^d \sum_{x\in\Zd} v(\eps x) \sum_{z\in\Z^d}\w_{x,z} \del_z^\eps \ue (\eps x) b(\tau_x \w, z)
\to\int_{\Rd} v(x) \E\left[ \sum_{z\in\Zd} \w_{0,z} \rmw(x,\omega,z) b(\w,z) \right]\d x\,.
\label{eq:wo-scale-gradient-help-2}
\end{align}

For the second term on the RHS of \eqref{eq:wo-scale-gradient-help-1}, we first notice that the sum is absolutely convergent since
\begin{align}
   &\eps^d \sum_{x\in\Zd} \sum_{z\in\Zd} \left| \w_{x,z} b(\tau_{x}\w,z) \ue (\eps x + \eps z) \left( \frac{v(\eps x +\eps z) - v(\eps x)}{\eps} \right)\right|\nonumber\\
   &\qquad\qquad=
   \eps^d \sum_{z\in\Zd} \sum_{x\in \eps^{-1} Q_\eps - z} \left| \w_{x,z} b(\tau_{x}\w,z) \ue (\eps x + \eps z) \left( \frac{v(\eps x +\eps z) - v(\eps x)}{\eps} \right)\right|\nonumber\\
   &\qquad\qquad=
   \eps^d \sum_{z\in\Zd} \sum_{x\in \eps^{-1} Q_\eps} \left| \w_{x-z,z} b(\tau_{x-z}\w,z) \ue (\eps x) \left( \frac{v(\eps x) - v(\eps x -\eps z)}{\eps} \right)\right|\nonumber\\
   &\qquad\qquad=
   \eps^d \sum_{x\in \eps^{-1} Q_\eps} \sum_{z\in\Zd} \left| \w_{x,z} b(\tau_{x}\w,z) \ue (\eps x) \left( \frac{v(\eps x +\eps z) - v(\eps x)}{\eps} \right)\right|\, ,
   \label{equ:absconvgrad}
\end{align}
where for the last equality we have used the relation $\w_{x-z,z} = \w_{x,-z}$, the shift covariance \eqref{equ:shiftcov2} and the substitution $z\rightsquigarrow -z$.
We now use the fact that $\ue$ and $\nabla v$ are bounded, apply the Cauchy-Schwarz inequality and the ergodic theorem to obtain that the above sum is indeed finite.
It follows that for the second term on the RHS of \eqref{eq:wo-scale-gradient-help-1}, we can exchange the order of the sums.
By the same arguments as those that led to \eqref{equ:absconvgrad}, we obtain that
\begin{align*}
  \eps^d \sum_{x\in\Zd} \sum_{z\in\Zd} \w_{x,z} \ue (\eps x + \eps z) \del_z^\eps v(\eps x)b(\tau_x\w, z)
  &=\eps^d \sum_{x\in\Zd} \sum_{z\in\Zd} \w_{x,z} \ue (\eps x) \del_z^\eps v(\eps x)b(\tau_x\w, z)\, .
\end{align*}
Further we notice that since $\ue$ has support only in $\eps^{-1}Q_\eps$, we can estimate
\begin{align*}
  \eps^d &\sum_{x\in\Zd} \ue (\eps x) \sum_{z\in\Zd} \w_{x,z} \left( \del^\eps_z v (\eps x) - \nabla v(\eps x)\cdot z \right) b(\tau_x\w, z)\\
  &\leq\| \ue \|^2_\infty\left( \eps^d\mspace{-6mu} \sum_{x\in \Z^d}\sum_{z\in \Zd}\omega_{x,z}\left(\del^\eps_z v(\eps x)-\nabla v(\eps x)\cdot z\right)^{2}\right)^{1/2} \left( \eps^d\mspace{-6mu} \sum_{x\in \eps^{-1}Q_\eps} \sum_{z\in\Zd}\w_{x,z} b^2 (\tau_x\w,z)\right)^{1/2}\, .
\end{align*}
The limit superior of the second factor vanishes due to Lemma \ref{lem:TestWithGradient} and the third factor is finite due to the ergodic theorem.
Thus, for $\omega\in \Omega_{\Phi}$ we have
\begin{align}
  \limsup_{\eps\to 0} \eps^d \sum_{x\in\Zd} \ue (\eps x) \sum_{z\in\Zd} \w_{x,z} \left( \del^\eps_z v (\eps x) - \nabla v(\eps x)\cdot z \right) b(\tau_x\w, z) = 0\, .
  \label{equ:DamnDiscrChainHelp}
\end{align}

To summarize, for the second term on the RHS of \eqref{eq:wo-scale-gradient-help-1}, it follows that
\begin{align*}
  \lim_{\eps\to 0}\eps^d \sum_{x\in\Zd} \sum_{z\in\Zd} &\w_{x,z} \ue (\eps x + \eps z) \del_z^\eps v(\eps x)b(\tau_x\w, z)\\
  &=\lim_{\eps\to 0}\eps^d \sum_{x\in\Zd} \ue (\eps x) \nabla v(\eps x)\cdot \left( \sum_{z\in \Z^d} z\, \w_{x,z} b(\tau_x \w, z) \right)\, .
\end{align*}
By the assumptions on $\omega$ and $b$, the last bracket on the above RHS is in $L^1 (\Omega, \P)$.
Since we already know that the subsequence $\Repsadj u^{\eps}\to u$ in $L^2 (\Rd)$, there exists a further subsequence, which we still index by $\eps\to 0$, where $u^\eps$ converges pointwise a.e.\ in $Q$ \cite[Theorem 4.9]{Brezis2011}.
Moreover, $\ue$ has support in $Q_\eps$ and $\sup_{\eps>0} \| \ue \|_\infty < \infty$ by assumption.
It follows that we can apply Theorem \ref{thm:General-ergodic-thm} along the above subsequence and obtain that
\begin{align}
  \eps^d \sum_{x\in\Zd} \ue (\eps x) \nabla v(\eps x)\cdot \left( \sum_{z\in \Z^d} z\, \w_{x,z} b(\tau_x \w, z) \right)
  \to \int_{\Rd} u(x) \nabla v(x)\cdot \E\left[ \sum_{z\in\Zd}z\w_{0,z} b(\omega,z) \right] \d x\, .
\end{align}
Thus we obtain by \eqref{eq:wo-scale-gradient-help-1} that
\begin{align}
\int_{\Rd} v(x) \E\left[ \sum_{z\in\Zd} \w_{0,z} \rmw(x,\omega,z) b(\w,z) \right]\d x
=-\int_{\Rd} u(x) \nabla v(x)\cdot \E\left[ \sum_{z\in\Zd}z\w_{0,z} b(\omega,z) \right] \d x\,.
\label{eq:wo-scale-gradient-help-3}
\end{align}

Let us now argue that \eqref{eq:wo-scale-gradient-help-3} implies that $\nabla u \in L^{2}(\Rd)$.
By virtue of \eqref{eq:Non-degeneracy-consequence}, for any $i=1,\ldots, d$ we can choose $b^i$ such that $\E\left[\sum_{z\in\Zd} \w_{0,z} zb^i (\w,z)\right]= \vec{e}_i$.
Then \eqref{eq:wo-scale-gradient-help-3} implies that
\begin{align*}
  \left| \int_{\Rd} u(x) \del_i v(x) \d x \right|
  &=\left|\int_{\Rd} v(x) \E\left[ \sum_{z\in\Zd} \w_{0,z} \rmw(x,\omega,z) b^i(\w,z) \right]\d x\right|\\
  &\leq \| b^i \|_{L^2_{\rm cov}}\int_{\Rd} \left|v(x)\right| \left(\E\left[\sum_{z\in\Zd}\w_{0,z} \rmw^2(x,\w,z)\right]\right)^{1/2} \d x\\
  &\leq \|v\|_2 \| b^i \|_{L^2_{\rm cov}} \left(\int_{\Rd}  \E\left[\sum_{z\in\Zd}\w_{0,z} \rmw^2(x,\w,z)\right] \d x\right)^{1/2}\, .
\end{align*}
Since $\rmw \in L^2 (\Rd, L^2_{\rm cov})$, there exists $C<\infty$ such that for any $i=1,\ldots, d$ we observe that
\begin{align*}
  \left| \int_{\Rd} u(x) \del_i v(x) \d x \right|
  &\leq C \|v\|_2\, .
\end{align*}
By virtue of \cite[Proposition 9.3]{Brezis2011} it follows that $\nabla u \in L^2 (\Rd)$.
Since $u|_{\Rd\backslash Q}=0$, we conclude $u\in H_{0}^{1}(Q)$.

We now use integration by parts on the RHS of \eqref{eq:wo-scale-gradient-help-3} and obtain that
\begin{align}
\int_{\Rd} v(x) \E\left[ \sum_{z\in\Zd} \w_{0,z} \left(\rmw(x,\omega,z) - \nabla u (x)\cdot z\right) b(\w,z) \right]\d x=0\,.
\end{align}

Since the last equation holds for all $v\in\Psi$ and all $b\in\Phi_{\rm sol}$,
we find that 
\[
\rmw(x,\w,z)=\nabla u(x)\cdot z+\cgu(x,\w,z)\qquad\mbox{with }\cgu\in L^{2}(\Rd;L_{\rm pot}^{2})\,.
\]
\end{proof}

\section{Proof of Theorem \ref{thm:twoscale-strong}}
\label{ref:ProofPoisson}
We start with an auxiliary lemma.
\begin{lem}
\label{lem:twoscale-weak}
Let $f^{\eps}\colon Q\cap\Zdeps\to\R$ be a sequence of functions such that $\Repsadj f^{\eps}\weakto f$ weakly in $L^{2}(Q)$ for some $f\in L^{2}(Q)$ and such that $\sup_{\eps}\left\Vert f^{\eps}\right\Vert_{\infty}<\infty$.
Then for almost all $\omega\in\Omega$ it holds: The sequence
of solutions $\ue\in\Heps$ to the problem \eqref{equ:epsPoisson} satisfies $\Repsadj \ue\to u$ strongly in $L^{2}(Q)$,
where $u\in H_{0}^{1}(Q)\cap H^{2}(Q)$ solves the limit problem \eqref{eq:strong-limit-equation}.
\end{lem}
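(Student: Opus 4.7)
The plan is to combine the uniform bounds from Section~\ref{sec:inequalities} with the two-scale machinery of Section~\ref{sec:AnalyticTools}, extract a two-scale limit of the discrete gradient, identify it through the corrector, and derive the homogenized equation by testing against two complementary families of test functions.

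First I would establish uniform bounds. Since $\sup_\eps \|f^\eps\|_\infty < \infty$, Proposition~\ref{prop:linftyPoiss} together with Remark~\ref{rem:linftyPoiss} gives $\sup_\eps \|\ue\|_\infty < \infty$; testing $-\Laeps \ue = f^\eps$ against $\ue$ and invoking the uniform Poincar\'e inequality~\eqref{equ:PI} yields $\sup_\eps \mathcal{E}_\w^\eps(\ue) < \infty$. The compact embedding Lemma~\ref{lem:precompact} and the two-scale gradient Lemma~\ref{lem:two-scale-gradient} then provide, for $\w$ in a set of full measure and along a (non-relabelled) subsequence, some $u \in H^1_0(Q)$ and $\cgu \in L^2(\Rd;\Lpot)$ such that $\Repsadj \ue \to u$ strongly in $L^2(Q)$ and $\del^\eps_z \ue \tsweak \nabla u(x)\cdot z + \cgu(x,\w,z)$.

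Next I would pass to the limit in the weak formulation
\begin{equation*}
  \frac{\eps^d}{2} \sum_{x,z} \w_{x,z}\,(\del^\eps_z \ue)(\eps x)\,(\del^\eps_z v^\eps)(\eps x)
  \;=\; \eps^d \sum_{x} f^\eps(\eps x)\, v^\eps(\eps x)
\end{equation*}
for two choices of test function. For $v^\eps = \Reps v$ with $v \in C_c^\infty(Q)$, Corollary~\ref{cor:TestWithGradient} together with $\Repsadj f^\eps \weakto f$ in $L^2(Q)$ produces
\begin{equation*}
  \tfrac12 \int_Q \E\!\left[\sum_z \w_{0,z}\bigl(\nabla u(x)\cdot z + \cgu(x,\w,z)\bigr)\bigl(\nabla v(x)\cdot z\bigr)\right]\d x \;=\; \int_Q f\,v\,\d x.
\end{equation*}
For the oscillating choice $v^\eps(\eps x) \ldef \eps\,\psi(\eps x)\,\phi(\tau_x\w)$ with $\psi \in C_c^\infty(Q)$ and $\phi$ a bounded local function, a discrete Leibniz expansion gives $\del^\eps_z v^\eps(\eps x) = \psi(\eps x)\,\mD\phi(\tau_x\w,z) + \eps\,(\del^\eps_z\psi)(\eps x)\,\phi(\tau_{x+z}\w)$; the RHS of the weak form vanishes owing to the prefactor $\eps$, the remainder on the LHS is controlled via Lemma~\ref{lem:BddDiscrGrad}, an index shift as in~\eqref{equ:absconvgrad} and Cauchy--Schwarz, while the leading test $\psi\cdot\mD\phi$ lies in $\Psi\otimes\Phi_{\rm pot}$, so the two-scale convergence of $\del^\eps_z \ue$ yields
\begin{equation*}
  \E\!\left[\sum_z \w_{0,z}\bigl(\nabla u(x)\cdot z + \cgu(x,\w,z)\bigr)\mD\phi(\w,z)\right] \;=\; 0 \qquad \text{for a.e.\ } x\in Q
\end{equation*}
and every local $\phi$. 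Consequently $z \mapsto \nabla u(x)\cdot z + \cgu(x,\w,z)$ belongs to $\Lsol$ for a.e.\ $x$.

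With the corrector $\chi \in (\Lpot)^d$ of~\eqref{eq:definition-corrector}, the element $\sum_i \del_i u(x)[z_i + \chi_i(\w,z)]$ is also in $\Lsol$, so the difference $\sum_i \del_i u(x)\chi_i(\w,z) - \cgu(x,\w,z)$ lies in $\Lpot \cap \Lsol = \{0\}$, giving the identification $\cgu(x,\w,z) = \sum_i \del_i u(x)\,\chi_i(\w,z)$. Substituting into the first limit identity and exploiting the orthogonality $\E[\sum_z \w_{0,z}(z_i+\chi_i)\chi_j] = 0$, the bilinear form collapses to $\tfrac12 \int_Q \nabla v \cdot A_{\hom}\nabla u \,\d x$ with $A_{\hom}$ as in~\eqref{eq:Definition-A-hom}, so $u \in H^1_0(Q)$ solves~\eqref{eq:strong-limit-equation} weakly. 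Uniqueness of the weak solution (guaranteed by Lemma~\ref{lem:non-degeneracy}) promotes subsequential to full convergence, and standard elliptic regularity for constant symmetric positive-definite coefficients on the convex cube $Q$ upgrades $u$ to $H^2(Q)$. The step I expect to be most delicate is the oscillating test-function calculation, specifically handling the shift $\phi(\tau_{x+z}\w)$ appearing in the Leibniz remainder; this does not fit directly into the two-scale framework and requires the same index-shift technique employed in the proof of Lemma~\ref{lem:two-scale-gradient}.
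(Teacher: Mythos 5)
Your proposal is correct and follows essentially the same route as the paper's proof: uniform $\ell^2$ and $\ell^\infty$ bounds via the Poincar\'e inequality and Moser iteration, compactness and two-scale gradient convergence, passage to the limit against both macroscopic test functions $\Reps v$ (via Corollary~\ref{cor:TestWithGradient}) and oscillating test functions $\eps\,\psi\,\phi(\tau_\cdot\w)$ to derive the corrector equation, identification $\cgu=\nabla u\cdot\chi$, and finally the orthogonality $L^2_{\rm pot}\perp L^2_{\rm sol}$ to collapse the bilinear form to $A_{\hom}$. The one step you flag as delicate is actually easier than you expect: in the Leibniz remainder $\eps\,(\del^\eps_z\psi)(\eps x)\,\phi(\tau_{x+z}\w)$ the paper simply bounds $\phi(\tau_{x+z}\w)$ by $\|\phi\|_\infty$ and applies Cauchy--Schwarz together with Lemma~\ref{lem:BddDiscrGrad}, so no index shift is needed there.
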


\begin{proof}
We test Equation \eqref{equ:epsPoisson} with an arbitrary test function $g^\eps\,:\,\Zdeps\to\R$ with $\supp g^\eps \subseteq Q\cap \Zdeps$ and obtain by Notation \eqref{equ:DiscreteDivForm} and Equation \eqref{eq:Laeps-strictly-positive-def} that
\begin{equation}
\left\langle -\Laeps\ue,g^\eps\right\rangle_\Heps=\left\langle A_\omega^\eps \nabla^\eps \ue,\nabla^\eps g^\eps\right\rangle_\Heps=\left\langle f^\eps,g^\eps\right\rangle_\Heps\,.\label{eq:thm-l-infty-proof-1}
\end{equation}
We now choose $g^\eps =\ue$ and apply \eqref{equ:PI} and Cauchy-Schwarz to obtain that 
\begin{align}
\left\Vert \ue\right\Vert _{\Heps}^{2}\leq C\eps^d \sum_{x\in\Zd} \sum_{z\in\Zd}\w_{x,z}\left( \del_z^\eps u^\eps (\eps x)\right)^2
\leq 2C\left\Vert \ue\right\Vert _{\Heps}\left\Vert f^{\eps}\right\Vert _{\Heps}\,.\label{eq:General-discrete-main-apriori-estimate}
\end{align}
Hence, in combination with Remark \ref{rem:linftyPoiss} we conclude that
\begin{subequations}
\label{equ:PfThm15Bddness}
\begin{equation}
\left\Vert \ue\right\Vert _{\ell^{2}(Q\cap\Zdeps)}^{2}+\eps^d \sum_{x\in\Zd} \sum_{z\in\Zd}\w_{x,z}\left( \del_z^\eps u^\eps (\eps x)\right)^2
\leq 4C^{2}\sup_{\eps>0}\left\Vert f^{\eps}\right\Vert _{\ell^{2}(Q\cap\Zdeps)}^{2}\label{equ:PfThm15BddnessL2}
\end{equation}
\begin{equation}
\text{and }\sup_{\eps>0}\left\Vert \ue\right\Vert_{\infty}
< \infty\,.\label{equ:PfThm15BddnessLinf}
\end{equation}
\end{subequations}
It follows that by virtue of Lemma \ref{lem:precompact} and Lemma \ref{lem:two-scale-gradient}, there exists $u\in H_{0}^{1}(Q)$, $\cgu\in L^{2}(Q;L_{\rm pot}^{2})$ and
a subsequence, which we still index by $\eps$, such that
\begin{equation}
\Repsadj u^{\eps}\to u\,,\mbox{ strongly in }L^{2}(Q)\quad\mbox{and}\quad\del_z^{\eps}u^{\eps} (x)\stackrel{2s}{\weakto}\nabla u (x)\cdot z+\cgu(x,\omega,z)\text{ as }\eps\to0
\label{eq:thm-two-scale-lim-help-1}
\end{equation}
for all $x,z\in \Zd$ and $\omega\in \Omega_\Phi$.

Let us choose $v\in C_{\rm c}^\infty (\Rd)$ with $\supp v\in Q$ and $\varphi\in \Phi_{\rm pot}$ with $\varphi=D \tilde{\varphi}$ for some bounded local function $\tilde{\varphi}$.
When we insert $g^\eps=\eps v\tilde{\varphi}$ into \eqref{eq:thm-l-infty-proof-1}, then we observe for all $\eps>0$ that
\begin{align}
\eps^d \sum_{x\in\Zd} 2f^{\eps}&(\eps x)\left(\eps v (\eps x)\tilde \varphi(\tau_x\omega)\right)\nonumber\\
&= \eps^d \sum_{x\in\Zd} \sum_{z\in \Z^d} \omega_{x,z} \partial_{z}^{\eps}\ue(\eps x)\, \left( v(\eps x + \eps z)\tilde{\varphi} (\tau_{x+z}\omega) - v(\eps x)\tilde{\varphi} (\tau_{x}\omega) \right)\nonumber\\
&= \eps^d \sum_{x\in\Zd} \sum_{z\in \Z^d} \omega_{x,z} \partial_{z}^{\eps}\ue(\eps x)\, \left[ v(\eps x)\left( \tilde{\varphi} (\tau_{x+z}\omega) - \tilde{\varphi} (\tau_{x}\omega) \right) + \eps \tilde{\varphi} (\tau_{x+z}\omega) \del_z^\eps v(\eps x)\right]\nonumber\\
&= \eps^d \sum_{x\in\Zd} \sum_{z\in \Z^d} \omega_{x,z} \partial_{z}^{\eps}\ue(\eps x)\, v(\eps x)\varphi (\tau_{x}\omega, z) \nonumber\\
&\qquad\qquad+\eps^d \sum_{x\in\Zd} \sum_{z\in \Z^d} \omega_{x,z} \partial_{z}^{\eps}\ue(\eps x)\, \eps \tilde{\varphi} (\tau_{x+z}\omega) \del_z^\eps v(\eps x)\, .
\label{equ:PfThm15help2}
\end{align}
The second summand on the above RHS vanishes as $\eps\to 0$ since
\begin{align}
  \eps^d \Big|\sum_{x\in\Zd} \sum_{z\in \Z^d} &\omega_{x,z} \partial_{z}^{\eps}\ue(\eps x)\, \eps \tilde{\varphi} (\tau_{x+z}\omega) \del_z^\eps v(\eps x)\Big|
  \leq \eps^{d+1} \| \tilde\varphi \|_\infty \left|\sum_{x\in\Zd} \sum_{z\in \Z^d} \omega_{x,z} \partial_{z}^{\eps}\ue(\eps x)\, \del_z^\eps v(\eps x)\right|\nonumber\\
  &\leq \eps \| \tilde\varphi \|_\infty \left( \eps^d \sum_{x\in\Zd} \sum_{z\in\Zd}\w_{x,z}\left( \del_z^\eps u^\eps (\eps x)\right)^2 \right)^{1/2} \left( \eps^d \sum_{x\in\Zd}\sum_{z\in \Zd}\omega_{x,z}\left(\del_z^\eps v(\eps x)\right)^{2} \right)^{1/2}
  \label{equ:PfThm15help1}
\end{align}
By assumption $\| \tilde\varphi \|_\infty$ is bounded.
The second factor is bounded due to \eqref{equ:PfThm15Bddness} and the third factor is bounded by virtue of Lemma \ref{lem:BddDiscrGrad}.
Since the LHS of \eqref{equ:PfThm15help2} vanishes as well, \eqref{eq:thm-two-scale-lim-help-1} and \eqref{equ:PfThm15help2} imply that in the limit $\eps\to 0$ and along the chosen subsequence we obtain
\begin{align}
\int_{Q} v(x) \E\left[\sum_{z\in\Zd}\w_{0,z}\left(\nabla u(x)\cdot z + \cgu(x,\omega,z)\right)\varphi(\omega,z) \right]
\d x=0\,.\label{eq:thm-two-scale-lim-help-split-2}
\end{align}
Since $\Phi_{\rm pot}$ is dense in $L^2_{\rm pot}$ and $\Psi$ is dense in $H^1_0(Q)$, Equation \eqref{eq:thm-two-scale-lim-help-split-2} holds for all $\varphi\in L^2_{\rm pot}$ and all $v\in H^1_0(Q)$.

Let $\chi\in \left( L_{\rm pot}^{2}\right)^d$ be given through \eqref{eq:definition-corrector}.
Since $u\in H^1_0 (Q)$ is given, the function $\cgu(x,\tilde{\omega},z):= \nabla u(x)\cdot\chi(\tilde{\omega},z)$
is the unique solution to \eqref{eq:thm-two-scale-lim-help-split-2}.
We have thus identified $\cgu$.

Now we observe that if we test \eqref{eq:thm-l-infty-proof-1} by an arbitrary $g\in C_{\rm c}^\infty (\Rd)$ with support in $Q$, we obtain that
\begin{align*}
\eps^d\sum_{x\in\Zd}\sum_{z\in\Zd}\omega_{x,z}\,\partial_{z}^{\eps}\ue(\eps x)\, \partial_{z}^{\eps}g (\eps x)
=\eps^d \sum_{x\in \Zd} 2f^{\eps}(\eps x) g (\eps x)\,.
\end{align*}
Passing to the limit, we obtain by virtue of Corollary \ref{cor:TestWithGradient} and $\cgu(x,\omega,z) = \nabla u(x)\cdot\chi(\omega,z)$ that
\begin{align}
\int_{\Rd}\E\left[ \sum_{z\in\Zd}\w_{0,z}\left( \nabla u(x) \cdot (z +  \chi)\right) \left(\nabla g (x)\cdot z \right)\right]\, \d x
=\int_{\R^d} 2f(x)g (x)\,\d x\,.
\label{eq:thm-two-scale-lim-help-split-1}
\end{align}

When we now insert $v=\del_i g$ and $\varphi=\chi_i$ for $i=1,\ldots,d$ into \eqref{eq:thm-two-scale-lim-help-split-2} and add the resulting equations to \eqref{eq:thm-two-scale-lim-help-split-1}, then we obtain that
\begin{align}
   \int_{\Rd}\E\left[ \sum_{z\in\Zd}\w_{0,z}\left( \nabla u(x) \cdot (z +  \chi)\right) \left(\nabla g (x)\cdot (z+\chi) \right)\right]\, \d x
   =\int_{\R^d} 2f(x)g (x)\,\d x\, .
\end{align}
A comparison with the definition of $A_{\rm hom}$ in \eqref{eq:Definition-A-hom} finally yields that $u$ solves
\begin{equation}
\int_{Q}\nabla u\cdot\left(A_{\mathrm{hom}}\nabla g\right)=\int_{Q}2f\,g\qquad\text{for all } g\in C_{\rm c}^\infty (\Rd)\text{ with  }\supp g\subseteq Q\,.\label{eq:limit-weak-formulation}
\end{equation}
Since $A_{{\rm hom}}$ is non-degenerate, we find that \eqref{eq:limit-weak-formulation}
is the weak formulation of \eqref{eq:strong-limit-equation}.
Hence, from elliptic regularity theory \cite[Chapter 6]{Evans2010}, we obtain that $u\in H^{2}(Q)\cap H_{0}^{1}(Q)$.

Since the solution $u$ of \eqref{eq:strong-limit-equation} is unique, it follows that \eqref{eq:thm-two-scale-lim-help-1} holds for the entire sequence.
\end{proof}

As for the last ingredient for the proof of Theorem \ref{thm:twoscale-strong}, we observe the following:
On the cube $Q$ the operator $-\Laeps$ with zero Dirichlet conditions is strictly positive definite (see e.g.\ \eqref{eq:Laeps-strictly-positive-def}) and thus it follows that on $Q$ its inverse $\Beps\colon\, \Heps \to \Heps$ is well-defined.
Similarly, the inverse $\Bz\colon\, \Hz \to \Hz$ of $-\Lz$ on $Q$ is well-defined.
We have the following lemma.
\begin{lem}\label{lem:CondJKOii2}
  The operators $\Beps, \Bz$ are \Pas positive, compact and self-adjoint. The norms $\| \Beps \|$ are \Pas bounded by a constant independent of $\eps$.
\end{lem}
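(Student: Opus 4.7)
The plan is to verify the four claims separately, drawing mostly on tools already established earlier in the paper.

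First I would establish self-adjointness of $-\Laeps$ and $-\Lz$ themselves; self-adjointness of $\Beps, \Bz$ will then follow by inversion. Self-adjointness of $-\Laeps$ on $\Heps$ is immediate from the symmetry $\w_{x,z}=\w_{x+z,-z}$ via the symmetric bilinear-form expression \eqref{eq:Laeps-strictly-positive-def}. Self-adjointness of $-\Lz$ on $\Hz$ follows from the fact that $A_{\hom}$ is symmetric by standard arguments and positive definite by Lemma~\ref{lem:non-degeneracy}. Strict positivity of $-\Laeps$ on $\Heps$ is a direct consequence of \eqref{eq:Laeps-strictly-positive-def} together with the uniform Poincar\'e inequality \eqref{equ:PI}, while strict positivity of $-\Lz$ on $H^1_0(Q)$ follows from coercivity of the form $a(u,v)=\int_Q \nabla u\cdot A_{\hom}\nabla v\,dx$ combined with the classical Poincar\'e inequality; both invert to give well-defined positive operators $\Beps, \Bz$.

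For compactness, $\Beps$ is automatic since $Q_\eps=Q\cap\Zdeps$ is a finite set, so $\Heps$ is finite-dimensional. Compactness of $\Bz$ is the standard Rellich argument: by Lax--Milgram, $u=\Bz f$ lies in $H^1_0(Q)$ with $\|u\|_{H^1_0(Q)}\leq C\|f\|_{L^2(Q)}$, and the embedding $H^1_0(Q)\hookrightarrow L^2(Q)$ is compact.

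The uniform operator-norm bound for $\Beps$ is the step that really uses the main analytic input of the paper. Given $f^\eps\in\Heps$, set $u^\eps=\Beps f^\eps$, so that $-\Laeps u^\eps=f^\eps$. Combining the uniform Poincar\'e inequality \eqref{equ:PI} with the Cauchy--Schwarz inequality yields
\begin{equation*}
\|u^\eps\|_\Heps^2 \;\leq\; C(\omega)\,\mathcal{E}^\eps_\w(u^\eps) \;=\; C(\omega)\,\langle u^\eps,f^\eps\rangle_\Heps \;\leq\; C(\omega)\,\|u^\eps\|_\Heps\,\|f^\eps\|_\Heps,
\end{equation*}
from which $\|\Beps f^\eps\|_\Heps\leq C(\omega)\|f^\eps\|_\Heps$ with $C(\omega)$ \Pas finite and independent of $\eps$. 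There is no real obstacle here: every ingredient -- the uniform Poincar\'e inequality of Remark~\ref{rem:PI}, symmetry of the conductances, the non-degeneracy of $A_{\hom}$ from Lemma~\ref{lem:non-degeneracy} and Rellich's theorem -- has been set up in the preceding sections, and the lemma essentially collects them.
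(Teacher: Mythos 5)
Your proposal is correct and follows essentially the same route as the paper: the paper likewise deduces the properties of $\Bz$ from symmetry and positive-definiteness of $A_{\hom}$ via elliptic theory, obtains compactness of $\Beps$ from finite-dimensionality of $\Heps$ and self-adjointness from the real-symmetric structure, and gets the uniform norm bound from the a priori estimate \eqref{equ:PfThm15BddnessL2}, which is exactly the Poincar\'e-plus-Cauchy--Schwarz chain you rederive. The only cosmetic differences are that you spell out positivity explicitly and rederive the a priori bound in place rather than citing \eqref{equ:PfThm15BddnessL2}.
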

\begin{proof}
Since $A_{\rm hom}$ is positive definite and symmetric, the properties of $\Bz$ follow from the theory of elliptic partial differential equations, see e.g.\ \cite[Chapter 6]{Evans2010}.
  
The operator $\Beps$ is uniformly bounded in $\eps$ by virtue of \eqref{equ:PfThm15BddnessL2}.
Moreover, $\Beps$ is real and symmetric by construction and therefore self-adjoint.
Finally, its range $\Heps$ is finite-dimensional and thus $\Beps$ is compact.
\end{proof}
\begin{proof}[Proof of Theorem \ref{thm:twoscale-strong}]
Let us first show that
\begin{align}
  \lim_{\eps\to 0}\int_{\Rd}\bigl( \Repsadj \ue\bigr)\, v
  =\int_{\Rd} u v\quad\text{for all }v\in C(\overline{Q})\, ,
\end{align}
where $u\in H^{2}(Q)\cap H_{0}^{1}(Q)$ is the solution to \eqref{eq:strong-limit-equation}.
Indeed, since $\Beps$ is self-adjoint, we observe that
\begin{align*}
  \int_{\Rd}\bigl( \Repsadj \ue\bigr)\, v
  = \int_{\Rd}\bigl( \Repsadj \Beps f^\eps\bigr)\, v
  = \left\langle f^\eps,\, \bigl( \Beps\Reps v\bigr)\right\rangle_\Heps\, .
\end{align*}
Since $\Repsadj \Reps v \weakto v$ in $L^2$ and $\sup_{\eps>0} \| \Reps v \|_\infty <\infty$, Lemma \ref{lem:twoscale-weak} implies that $\Beps\Reps v$ converges strongly in $L^2$ to $\Bz v$. It follows that
\begin{align*}
  \lim_{\eps\to 0}\left\langle f^\eps,\, \bigl( \Beps\Reps v\bigr)\right\rangle_\Heps
  = \int_{\Rd} f\,  \bigl( \Bz v\bigr)
  = \int_{\Rd} \bigl(\Bz f\bigr)\, v
  = \int_{\Rd} uv\, ,
\end{align*}
where we have used that the operator $\Bz$ is self-adjoint, see Lemma \ref{lem:CondJKOii2}.

We further note that $\sup_{\eps>0} \| \Repsadj \ue \|_2 <\infty$ by the same arguments as for \eqref{equ:PfThm15BddnessL2}.
Since $C(\overline{Q})$ is dense in $L^2 (Q)$, it thus follows that
$\Repsadj \ue \weakto u$.
By virtue of Lemma \ref{lem:precompact} and \eqref{equ:PfThm15BddnessL2} we conclude that $\Repsadj \ue \to u$ strongly in $L^2$.
\end{proof}

\section{Proofs of Proposition \ref{prop:twoscale-strong} and Theorem \ref{thm:spectrum}}
\label{sec:ProofSpectrum}
\begin{proof}[Proof of Proposition \ref{prop:twoscale-strong}]
The existence of solutions to \eqref{eq:prop:twoscale-strong-eps} follows from positivity of the first eigenvalue for small $\eps$.
Hence we can calculate the apriori estimates similar to \eqref{eq:General-discrete-main-apriori-estimate} by testing \eqref{eq:prop:twoscale-strong-eps} with $\ue$ and using $\liminf_{\eps\to0}\lambda_1^\eps>0$ to obtain 
\begin{align*}
 \left\Vert \ue\right\Vert _{\Heps}^{2}\leq (\lambda_1^\eps)^{-1}\langle -\Laeps u^\eps+\Reps Vu^\eps, u^\eps\rangle_\Heps 
\leq 2(\lambda_1^\eps)^{-1}\left\Vert \ue\right\Vert _{\Heps}\left\Vert f^{\eps}\right\Vert _{\Heps}\,.
\end{align*}
Since $V$ is bounded, this implies that $\langle -\Laeps u^\eps, u^\eps\rangle_\Heps$ is bounded in $\eps$.
From Lemma \ref{lem:precompact} it follows that $\Reps^\ast \ue \to u$ strongly in $L^2(Q)$ and hence $\Reps^\ast(\Reps V\, \ue)\weakto Vu$.
Hence from Theorem \ref{thm:twoscale-strong} we obtain that $u$ solves \eqref{eq:prop:strong-limit-equation}.
\end{proof}

\begin{proof}[Proof of Theorem \ref{thm:spectrum}]
First, we notice that without loss of generality, we can assume that the function $V$ is nonnegative.
Otherwise, we simply substitute $V$ for $V-\min_{x\in Q} V(x)$ and prove the result for the new $V$.
Then we notice that the substitution has simply shifted the spectrum by the constant $\min_{x\in Q} V(x)$ and the new eigenvectors are the same as the old ones.
Thus, it suffices to prove the claim for $V\geq 0$.
Note that \eqref{eq:Laeps-strictly-positive-def} directly implies that if $V\geq 0$, then $\lambda_1^{\eps}$ is positive.

Then Lemmas \ref{lem:CondJKO} and \ref{lem:CondJKOii2} ensure that Conditions I-IV of \cite[Section 11.1]{JKO1994} are satisfied and Theorem \ref{thm:spectrum} follows by virtue of \cite[Theorems 11.4, 11.5]{JKO1994}.
\end{proof}
As in the paragraph before Lemma \ref{lem:CondJKOii2}, we now define the operators $\Beps (V)$ and $\Bz (V)$ as the inverses of $-\Laeps + \Reps V$ and $-\Lz+V$, respectively.
For $V\geq 0$, we further consider the spectrum of the operators $\Beps (V)$, where we drop the argument ``$(V)$'' for readability:
\begin{align}
\begin{aligned}
	&\psi_k^\eps \in \Heps, \quad \Beps \psi_k^\eps = \mu_k^\eps \psi_k^\eps, \quad k = 1,2,\ldots\, ,\pb{1.5em}\\
	&\mu_1^\eps \geq \mu_2^\eps \geq \ldots \geq \mu_k^\eps \ldots \, , \quad \mu_k^\eps >0 \, ,\pb{1.5em}\\
	&\langle \psi_k^\eps, \psi_l^\eps \rangle_{\Heps} = \delta_{kl}\, ,\pb{1.5em}
\end{aligned}
\end{align}
as well as the spectrum of the operator $\Bz (V)$, where we also drop the argument ``$(V)$'' for readability:
\begin{align}
\begin{aligned}
	&\psi_k^0 \in \Hz, \quad \Bz \psi_k^0 = \mu_k^0 \psi_k^0, \quad k = 1,2,\ldots\, ,\pb{1.5em}\\
	&\mu_1^0 \geq \mu_2^0 \geq \ldots \geq \mu_k^0 \ldots \, , \quad \mu_k^0 >0 \, ,\pb{1.5em}\\
	&\langle \psi_k^0, \psi_l^0 \rangle_{\Heps} = \delta_{kl}\, .\pb{1.5em}
\end{aligned}
\end{align}
\begin{remark}\label{rem:RelationBL}
The eigenfunctions $\left\{ \psi_k^\eps\right\}_k$ of the operator $\Beps$ and the eigenfunctions $\left\{ \psi_k^0\right\}_k$ of the operator $\Bz$
coincide with the eigenfunctions of the operators $-\Laeps + \Reps V$ and $-\Lz+V$, respectively.
Their eigenvalues relate to those of $-\Laeps+\Reps V$ and $-\Lz+V$ by
\begin{align*}
	\mu_k^\eps = \left( \lambda_k^\eps\right)^{-1}\, , \quad \mu_k^0 = \left( \lambda_k^0\right)^{-1} \, \quad k = 1, 2, \ldots\, .
\end{align*}
\end{remark}
\begin{lem}
\label{lem:CondJKO}~
\begin{enumerate}[label={(\roman*)},ref={\thethm~(\roman*)}]
	\item\label{lem:CondJKOi} For any $u\in\Hz$, the following is true:
	\begin{align}
		\| \Reps u \|_{\Heps} \leq \| u \|_{\Hz}\, .
		\label{equ:CompNorms}
	\end{align}
	Further,
	\begin{align}
		\lim_{\eps\to 0}\, \langle u^\eps, v^\eps \rangle_{\Heps} = \langle u, v \rangle_{\Hz}\, .
		\label{equ:ConvSP}
	\end{align}
	provided that $u, v\in\Hz$ and $u^\eps, v^\eps \in\Heps$ and
	\begin{align}
		\lim_{\eps\to 0}\, \| u^\eps - \Reps u \|_{\Heps} = 0, \quad\text{and}\quad \lim_{\eps\to 0}\, \| v^\eps - \Reps v \|_{\Heps} = 0\, .
		\label{equ:Convueps}
	\end{align}
\end{enumerate}
Let $V\colon \Rd \to \R$ be a non-negative, continuous potential.
If Assumptions \ref{ass:Environ} and \ref{ass:Integ}\ref{ass:Integ:q} are fulfilled, then furthermore the following statements hold.
\begin{enumerate}[label={(\roman*)},ref={\thethm~(\roman*)}]
\setcounter{enumi}{1}
	\item\label{lem:CondJKOiii} Let $f\in\Hz$ and let $f^\eps \in\Heps$. Then the following is true: If
	\begin{align}
		\lim_{\eps\to 0}\, \| f^\eps - \Reps f \|_{\Heps} = 0\, ,
	\end{align}
	then
	\begin{align}
	  \lim_{\eps\to 0}\, \| \Beps f^\eps - \Reps \Bz f \|_{\Heps} = 0\quad \mathbb{P}\text{-a.s.}
			\label{equ:ConvSol}			
	\end{align}
	\item\label{lem:CondJKOiv} For any sequence $f^\eps\in\Heps$ such that $\sup_\eps \| f^\eps \|_\Heps<\infty$, there exists a subsequence $f^{\eps'}$ and a vector $w^0\in\Hz$ such that
	\begin{align*}
	  \lim_{\eps'\to 0} \left\| \mathcal{R}^\ast_{\eps'}\mathcal{B}_{\eps'} f^{\eps'} - w^0 \right\|_{\Hz}
	  &=\lim_{\eps'\to 0} \left\| \mathcal{B}_{\eps'} f^{\eps'} - \mathcal{R}_{\eps'} w^0 \right\|_{\mathcal{H}_{\eps'}} = 0\, .
	\end{align*}
\end{enumerate}
\end{lem}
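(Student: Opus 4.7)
The overall approach hinges on two structural facts that will be used throughout: first, the adjoint $\Repsadj:\Heps\to L^2(Q)$ is an $L^2$-isometry, which is evident from the representation \eqref{equ:Repsadj} as a sum of indicator functions of the disjoint balls $b(z,\eps/2)$; second, for every $u\in\Hz$ one has $\Repsadj\Reps u\to u$ strongly in $L^2$, which is the standard approximation of $L^2$-functions by their local averages on a shrinking partition. Almost every convergence in the lemma will be obtained by rewriting an $\Heps$-norm as an $L^2$-norm through the isometry and then invoking one of the homogenization or compactness results already established.

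For \ref{lem:CondJKOi}, the estimate $\|\Reps u\|_\Heps\leq \|u\|_\Hz$ is Jensen's inequality applied pointwise to the local average \eqref{equ:Reps}, then summed over $z$. To show \eqref{equ:ConvSP} under \eqref{equ:Convueps}, I would split $\Repsadj u^\eps=\Repsadj(u^\eps-\Reps u)+\Repsadj\Reps u$; the first summand has $L^2$-norm $\|u^\eps-\Reps u\|_\Heps\to 0$ by hypothesis, while the second converges to $u$ in $L^2$ by the approximation property. Hence $\Repsadj u^\eps\to u$ and $\Repsadj v^\eps\to v$ strongly in $L^2$, and \eqref{equ:ConvSP} follows by rewriting $\langle u^\eps,v^\eps\rangle_\Heps=\langle\Repsadj u^\eps,\Repsadj v^\eps\rangle_{L^2(Q)}$ and passing to the limit.

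Part \ref{lem:CondJKOiii} is the main analytic step and essentially reduces to Proposition \ref{prop:twoscale-strong}. The argument of \ref{lem:CondJKOi} applied to $f^\eps$ and $\Reps f$ already yields $\Repsadj f^\eps\to f$ strongly (and a fortiori weakly) in $L^2(Q)$. Under Assumptions \ref{ass:Environ} and \ref{ass:Integ}\ref{ass:Integ:q}, the uniform Poincar\'e inequality \eqref{equ:PI} combined with $V\geq 0$ gives $\liminf_{\eps\to 0}\lambda_1^\eps>0$ on a full-measure event, so Proposition \ref{prop:twoscale-strong} applies and delivers $\Repsadj\Beps f^\eps\to \Bz f$ strongly in $L^2$ $\Pr$-a.s. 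Combining with $\Repsadj\Reps\Bz f\to \Bz f$ in $L^2$ and using the isometry once more gives
\begin{equation*}
  \|\Beps f^\eps-\Reps\Bz f\|_\Heps=\|\Repsadj\Beps f^\eps-\Repsadj\Reps\Bz f\|_{L^2(Q)}\;\longrightarrow\;0\,.
\end{equation*}

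For \ref{lem:CondJKOiv}, set $u^\eps\ldef\Beps f^\eps$. The uniform Poincar\'e inequality gives $\|u^\eps\|_\Heps\leq C(\omega)\|f^\eps\|_\Heps$, which is uniformly bounded by assumption. Testing the equation $(-\Laeps+\Reps V)u^\eps=f^\eps$ against $u^\eps$ and using $V\geq 0$ bounded yields $\sup_\eps\mathcal{E}_\w^\eps(u^\eps)<\infty$ via Cauchy--Schwarz. The compact embedding of Lemma \ref{lem:precompact} then extracts a subsequence $\eps'\to 0$ and $w^0\in\Hz$ with $\Repsadj\Beps f^{\eps'}\to w^0$ strongly in $L^2$, which is the first identity. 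The second identity is immediate from the isometry:
\begin{equation*}
  \|\Beps f^{\eps'}-\Reps w^0\|_{\mathcal{H}_{\eps'}}=\|\Repsadj\Beps f^{\eps'}-\Repsadj\Reps w^0\|_{L^2(Q)}\,,
\end{equation*}
and both terms on the right converge to $w^0$ in $L^2$. The main obstacle is entirely packaged into Proposition \ref{prop:twoscale-strong} and Lemma \ref{lem:precompact}; once these are granted, the only care needed is to fix a single full-measure event on which the uniform Poincar\'e inequality, the ergodic statements used in the compact embedding, and the conclusion of Proposition \ref{prop:twoscale-strong} all hold simultaneously.
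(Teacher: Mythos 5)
Your proof is correct. Parts \ref{lem:CondJKOi} and \ref{lem:CondJKOiii} follow essentially the same lines as the paper: Jensen's inequality for \eqref{equ:CompNorms}, a direct estimate for the convergence of inner products, and an application of Proposition \ref{prop:twoscale-strong} (the paper even notes, just after the statement of that proposition, that $V\geq 0$ together with Assumption \ref{ass:Integ}\ref{ass:Integ:q} forces $\liminf_{\eps\to 0}\lambda_1^\eps>0$, so your invocation is exactly the intended one). For part \ref{lem:CondJKOiv} your route differs a little from the paper's: you apply the compact embedding of Lemma \ref{lem:precompact} directly to $u^\eps=\Beps f^\eps$ after deriving energy bounds, whereas the paper extracts a weakly convergent subsequence $\Repsadj f^{\eps'}\weakto f$ by Banach--Alaoglu and then reuses Proposition \ref{prop:twoscale-strong}, identifying $w^0=\Bz f$. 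Both are valid and both ultimately rest on the same compactness; the paper's version is shorter and names the limit, while yours avoids re-invoking the full proposition. One small presentational point: you first assert that the uniform Poincar\'e inequality \eqref{equ:PI} gives $\Norm{u^\eps}{\Heps}\leq C(\omega)\Norm{f^\eps}{\Heps}$, but \eqref{equ:PI} by itself only bounds $\Norm{u^\eps}{\Heps}^2$ by $\mathcal{E}_\w^\eps(u^\eps)$; the claimed $\ell^2$ bound is obtained only after you test $(-\Laeps+\Reps V)u^\eps=f^\eps$ against $u^\eps$, use $V\geq 0$ and then combine with \eqref{equ:PI}. You do carry out the testing in the very next sentence, so the argument is sound, but the two steps should be stated in the reverse order to make the logic linear.
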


\begin{proof}
For (i): Let $u\in\Hz$. By Jensen's inequality it follows that
\begin{align*}
	\| \Reps u \|^2_{\Heps} 	&= \eps^d \sum_{z\in\Zdeps} \eps^{-2d} \,\,\Biggl(\,\, \int_{\bzeps} u \d x\Biggr)^2
	\leq \eps^d \sum_{z\in\Zdeps} \eps^{-d} \,\,\Biggl(\,\, \int_{\bzeps} u^2 \d x\Biggr)
	= \| u \|^2_{\Hz}\, .
\end{align*}
For \eqref{equ:ConvSP} we first observe that
\begin{align}
	\left| \langle u^\eps, v^\eps \rangle_{\Heps} - \langle u, v \rangle_{\Hz} \right|
		&\leq \left|\left\langle v^\eps, u^\eps - \Reps u \right\rangle_{\Heps}\right| + \Biggr|\sum_{z\in\Zdeps} \int_{\bzeps} u\left(\Reps^\ast v^\eps(z)-v\right) \d x \Biggl|
		\nonumber\\
	&\leq \| v^\eps \|_{\Heps} \| u^\eps - \Reps u \|_{\Heps}
	+ \| u \|_{\Hz}\, \| v^\eps - \Reps v \|_{\Heps}\, .
	\label{equ:EstDiffSP1}
\end{align}
The second term on the above RHS converges to zero by assumption.
For the first term we note that the triangle inequality together with \eqref{equ:CompNorms} yields
\begin{align*}
		\| v^\eps \|_{\Heps} \leq \| \Reps v \|_{\Heps} + \| v^\eps - \Reps v \|_{\Heps} \leq \| v \|_{\Hz} + \| v^\eps - \Reps v \|_{\Heps},
\end{align*}
which is bounded from above.
It follows that the first term on the RHS of \eqref{equ:EstDiffSP1} converges to zero as well.

Part (ii) follows directly from Proposition \ref{prop:twoscale-strong} and \eqref{equ:ConvSP}.

Similarly, Part (iii) follows from Proposition \ref{prop:twoscale-strong} and \eqref{equ:ConvSP} since $\sup_\eps \| f^\eps \|_2 <\infty$ implies that there exists a subsequence $\eps'$ along which $\mathcal{R}^\ast_{\eps'} f^{\eps'} \weakto f$ in $L^2$.
\end{proof}

\section{Proof of Proposition \ref{prop:LDP}}
\label{sec:ProofPropLDP}
\begin{proof}[Proof of Proposition \ref{prop:LDP}]
This proof is an application of the G\"artner-Ellis theorem and goes along the lines of \cite[Theorem 1.8]{KoenigWolff}.
For the convenience of the reader, we outline the main steps here.

Let $V\colon \Rd \to \R$ be a bounded, continuous function.
We define the generating cumulant function
\begin{align}
  \Lambda_t (V) \ldef \frac{\alpha_t^2}{t} \log \Erw^\w_0 \left[ \left. \exp \left\{ -\frac{t}{\alpha_t^2} \int_Q V(y) L_t(y) \,\d y \right\} \,\right|\, X_{[0,t]} \subset \alpha_t Q\right]\, .
\end{align}
As in \cite{KoenigWolff}, it suffices to show that
\begin{align}
  \Lambda (V) \ldef \lim_{t\to\infty} \Lambda_t (V) = -\lambda_1 (V) + \lambda_1 (0)\, ,
  \label{equ:ConvGenCum}
\end{align}
where $\lambda_1 (V)$ denotes the principal Dirichlet eigenvalue of $-\Lz + V$ on $Q$ with zero Dirichlet boundary conditions.
Then the claim follows by the G\"artner-Ellis theorem.

In order to show \eqref{equ:ConvGenCum}, we define the operator $\Pavt$ acting on real-valued functions $f\in\ell^2 (\alpha_t Q\cap \Zd)$ by
\begin{align}
  \left(\Pavt f\right) (z) \ldef \Erw^\w_z \left[ \exp \left\{ -\frac{t}{\alpha_t^2} \int_Q V(y) L_t(y) \,\d y \right\} \mathds{1}_{\{ X_{[0,t]} \subset \alpha_t Q \}} f\left( X_t \right)\right] \quad (z\in\alpha_t Q\cap \Zd)\, .
\end{align}
Since $L_t$ is a step function, $\Pavt$ admits the semigroup representation
\begin{align}
  \Pavt = \exp \left\{ -t\alpha_t^{-2} \left[ -\alpha_t^2 \La + V_t \right] \right\}\, ,
\end{align}
where the operator in the exponent is considered with zero Dirichlet conditions at the boundary of $\alpha_t Q\cap \Zd$ and
\begin{align*}
  V_t (z) \ldef \int_{[-\frac{1}{2}, \frac{1}{2}]} V\left( \frac{z+y}{\alpha_t} \right) \, \d y \qquad (z\in \alpha_t Q \cap \Zd)\, .
\end{align*}

Let $\lambda_1^{(t)} (V)$ denote the principal Dirichlet eigenvalue of $-\alpha_t^2 \La + V_t$ on $\alpha_t Q\cap \Zd$ with zero Dirichlet boundary conditions.
Let $\psi_1^{(t)} (V)$ be the corresponding principal Dirichlet eigenfunction.
Then, in order to show \eqref{equ:ConvGenCum}, we have to show that
\begin{align}
  \lim_{t\to\infty} \frac{\alpha_t^2}{t} \log \left( \Pavt \mathds{1} \right) (0) = \lim_{t\to\infty} \lambda_1^{(t)} (V) = \lambda_1 (V)
\end{align}
for any $V\in\mathcal{C}_{\rm b} (\Rd)$.
The second equality follows by virtue of Theorem \ref{thm:spectrum}.
It remains to prove the first equality.
For this purpose we notice that an eigenvalue expansion together with Cauchy-Schwarz and Parseval's identity yields that
\begin{align*}
  \left( \Pavt \mathds{1} \right) (0) \leq \sqrt{|\alpha_t Q|} \exp\left\{ -\frac{t}{\alpha_t^2} \lambda_1^{(t)} (V) \right\}\, .
\end{align*}
On the other hand, since $\Pavt\geq 0$, we can estimate from below
\begin{align*}
  \left( \Pavt \mathds{1} \right) (0) &\;\geq\; \frac{1}{\sup_{\alpha_t Q} \psi_1^{(t)}} \left( \Pavt \psi_1^{(t)} \right) (0)
    \;\geq\; \psi_1^{(t)} (0) \exp\left\{ -\frac{t}{\alpha_t^2}\, \lambda_1^{(t)} (V)\right\}
\end{align*}
since $\psi_1^{(t)}$ is a normalized eigenfunction.
Thus, if $\psi_1^{(t)} (0)$ decays at most polynomially, we have proved the claim.

Similarly to the proof in \cite{KoenigWolff}, we obtain that
\begin{align}
  \psi_1^{(t)} (0) \geq {\rm e}^{-\lambda (V) - V^\ast} \left( \max_{x\in\alpha_t Q\cap \Zd} \psi_1^{(t)} \right)  \left( \min_{x\in\alpha_t Q\cap \Zd} \Prw^{\alpha_t^2\w}_0 \left[ X_1 =x \right]\right)\, ,
\end{align}
where $V^\ast$ is an upper bound for $V$.
Since $\psi_1^{(t)}$ is normalized and
\begin{align*}
  \min_{x\in\alpha_t Q\cap \Zd} \Prw^{\alpha_t^2\w}_0 \left[ X_1 =x \right] = \min_{x\in\alpha^2_t Q\cap \Zd} \Prw^{\w}_0 \left[ X_{\alpha_t} =x \right]
\end{align*}
decays at most polynomially by Assumption \ref{ass:heatkernel}, the claim follows.
\end{proof}

\section*{Acknowledgements}
The authors thank Wolfgang K\"onig, Stefan Neukamm, Mathias Sch\"affner, and Mark Peletier for fruitful discussions and valuable hints.
Furthermore we wish to thank an anonymous referee for his or her valuable comments and for pointing out several typos in an earlier version of this manuscript.
F.\ F.\ gratefully acknowledges the support by DFG within the RTG 1845 \emph{Stochastic Analysis with Applications in Biology, Finance and Physics}.
M.\ H.\ was supported by DFG through SFB 1114 \emph{Scaling Cascades in Complex Systems}, subproject C05 \emph{Effective models for interfaces with many scales}.

\bibliographystyle{alpha}

\begin{thebibliography}{Kum14}

\bibitem[ADS15]{ADS15}
S.~Andres, J.-D. Deuschel, and M.~Slowik.
\newblock Invariance principle for the random conductance model in a degenerate
  ergodic environment.
\newblock \href{https://doi.org/10.1214/14-AOP921}{{\em Ann. Probab.}, 43(4):1866--1891, 2015}.

\bibitem[ADS16]{ADS16}
S.~Andres, J.-D. Deuschel, and M.~Slowik.
\newblock Harnack inequalities on weighted graphs and some applications to the
  random conductance model.
\newblock \href{https://doi.org/10.1007/s00440-015-0623-y}{{\em Probab. Theory Relat. Fields}, 164:931, 2016}.

\bibitem[All92]{Allaire1992}
G.~Allaire.
\newblock Homogenization and two-scale convergence.
\newblock \href{https://doi.org/10.1137/0523084}{{\em SIAM J. Math. Anal.}, 23(6):1482--1518, 1992}.

\bibitem[BD03]{BoivinDepauw2003}
D.~Boivin and J.~Depauw.
\newblock Spectral homogenization of reversible random walks on
  {$\mathbb{Z}^d$} in a random environment.
\newblock \href{https://doi.org/10.1016/S0304-4149(02)00233-8}{{\em Stochastic {P}rocesses and their {A}pplications}, 104(1):29--56, 2003}.

\bibitem[BFK16]{BFK16}
M.~Biskup, R.~Fukushima and W.~K\"onig.
\newblock Eigenvalue fluctuations for lattice Anderson Hamiltonians.
\newblock \href{https://doi.org/10.1137/14097389X}{SIAM J. Math. Anal., 48(4):2674--2700, 2016}.

\bibitem[BFK17]{BFK17}
M.~Biskup, R.~Fukushima and W.~K\"onig.
\newblock Eigenvalue fluctuations for lattice Anderson Hamiltonians: Unbounded potentials.
\newblock \href{https://arxiv.org/abs/1710.06592}{{\em Preprint, available at arXiv:1710.06592}, 2017}.

\bibitem[BG90]{Bouchaud1990}
J.-P. Bouchaud and A.~Georges.
\newblock Anomalous diffusion in disordered media: statistical mechanisms,
  models and physical applications.
\newblock \href{https://doi.org/10.1016/0370-1573(90)90099-N}{{\em Physics Reports}, 195(4--5):127--293, 1990}.

\bibitem[Bis11]{Biskup2011review}
M.~Biskup.
\newblock Recent progress on the random conductance model.
\newblock \href{https://doi.org/10.1214/11-PS190}{{\em Probability Surveys}, 8:294--373, 2011}.

\bibitem[Bre11]{Brezis2011}
H.~Brezis.
\newblock {\em Functional analysis, {S}obolev spaces and partial differential
  equations}.
\newblock Universitext. Springer, New York, 2011.

\bibitem[DNS18]{DNS18}
J.-D. Deuschel, T.~A. Nguyen, and M.~Slowik.
\newblock Quenched invariance principles for the random conductance model on a
  random graph with degenerate ergodic weights.
\newblock \href{https://doi.org/10.1007/s00440-017-0759-z}{{\em Probab. Theory Relat. Fields}, 170(1--2):363--386, 2018}.

\bibitem[DV75]{DV75}
M.~D. Donsker and S.~R.~S. Varadhan.
\newblock Asymptotic evaluation of certain {M}arkov process expectations for
  large time, {I}.
\newblock \href{https://doi.org/10.1002/cpa.3160280102}{{\em Comm. Pure Appl. Math.}, 28:1--47, 1975}.

\bibitem[Eva10]{Evans2010}
L.~C. Evans.
\newblock {\em Partial differential equations}, volume~19 of {\em Graduate
  Studies in Mathematics}.
\newblock American Mathematical Society, Providence, RI, second edition, 2010.

\bibitem[Fag08]{Faggionato2008}
A.~Faggionato.
\newblock Random walks and exclusion processes among random conductances on
  random infinite clusters: homogenization and hydrodynamic limit.
\newblock \href{https://doi.org/10.1214/EJP.v13-591}{{\em Electron. J. Probab.}, 13:2217--2247, 2008}.

\bibitem[Fag12]{Faggionato2012}
A.~Faggionato.
\newblock Spectral analysis of 1{D} nearest-neighbor random walks and
  applications to subdiffusive trap and barrier models.
\newblock \href{https://doi.org/10.1214/EJP.v17-1831}{{\em Electron. J. Probab.}, 17:no.\ 15, 1--36, 2012}.

\bibitem[Fle16]{Flegel2016}
F.~Flegel.
\newblock Localization of the principal {D}irichlet eigenvector in the
  heavy-tailed random conductance model.
\newblock {\em Preprint, available at \href{https://arxiv.org/abs/1608.02415}{arXiv:1608.02415}}, 2016.

\bibitem[GKS07]{GKS07}
N.~Gantert, W.~K{\"o}nig, and Z.~Shi.
\newblock Annealed deviations of random walk in random scenery.
\newblock \href{https://doi.org/10.1016/j.anihpb.2005.12.002}{{\em Ann. Inst. H. Poincar\'e Probab. Statist.}, 43(1):47--76, 2007}.

\bibitem[GM18]{GM18}
A.~Giunti and J.-C. Mourrat.
\newblock Quantitative homogenization of degenerate random environments.
\newblock \href{http://doi.org/10.1214/16-AIHP793}{{\em Ann. Inst. H. Poincaré Probab. Statist.}, 54(1):22--50, 2018}.

\bibitem[JKO94]{JKO1994}
V.~V. Jikov, S.~M. Kozlov, and O.~A. Ole{\u\i}nik.
\newblock {\em Homogenization of differential operators and integral
  functionals}.
\newblock Springer-Verlag, Berlin, 1994.
\newblock Translated by the Russian by G. A. Yosifian [G. A. Iosif$'$yan].

\bibitem[Kes86]{Kesten1984}
H.~Kesten.
\newblock Aspects of first passage percolation.
\newblock In {\em \'{E}cole d'\'et\'e de probabilit\'es de {S}aint-{F}lour,
  {XIV}---1984}, volume 1180 of {\em Lecture Notes in Math.}, pages 125--264.
  Springer, Berlin, 1986.
  
  \bibitem[Kum14]{Ku10}
T.~Kumagai.
\newblock {\em Random walks on disordered media and their scaling limits},
  volume 2101 of {\em Lecture Notes in Mathematics}.
\newblock \href{https://doi.org/10.1007/978-3-319-03152-1}{Springer, Cham, 2014}.
\newblock Lecture notes from the 40th Probability Summer School held in
  Saint-Flour, 2010, \'Ecole d'\'et\'e de Probabilit\'es de Saint-Flour.
  [Saint-Flour Probability Summer School].

\bibitem[KW15]{KoenigWolff}
W.~K{\"o}nig and T.~Wolff.
\newblock Large deviations for the local times of a random walk among random
  conductances in a growing box.
\newblock \href{http://math-mprf.org/journal/articles/id1380/}{{\em Markov Process. Related Fields}, 21(3, part 1):591--638, 2015}.

\bibitem[Ma02]{Ma2002BanachMeasures}
T.-W. Ma.
\newblock {\em Banach-{H}ilbert spaces, vector measures and group
  representations}.
\newblock World Scientific Publishing Co., Inc., River Edge, NJ, 2002.

\bibitem[Mas93]{DalMaso1993}
G.~Dal~Maso.
\newblock {\em An introduction to {$\Gamma$}-convergence}.
\newblock Progress in Nonlinear Differential Equations and their Applications,
  8. Birkh\"auser Boston, Inc., Boston, MA, 1993.

\bibitem[MP07]{MP07}
P.~Mathieu and A.~Piatnitski.
\newblock Quenched invariance principles for random walks on percolation clusters.
\newblock \href{https://doi.org/10.1098/rspa.2007.1876}{{\em Proc. R. Soc. Lond. Ser. A Math. Phys. Eng. Sci.}, 463(2085):2287--2307, 2007}.

\bibitem[NSS17]{Neukamm2017}
S.\ Neukamm, M.\ Sch{\"a}ffner, A.\ Schl{\"o}merkemper.
\newblock Stochastic homogenization of nonconvex discrete energies with
  degenerate growth.
\newblock \href{https://doi.org/10.1137/16M1097705}{{\em SIAM J. Math. Anal.}, 49(3):1761--1809, 2017}.

\bibitem[Ngu]{nguetseng1989general}
G.~Nguetseng.
\newblock A general convergence result for a functional related to the theory of homogenization.
\newblock \href{https://doi.org/10.1137/0520043}{{\em SIAM J. Math. Anal.}, 20(3):608--623, 1989}.

\bibitem[PZ17]{piatnitski2017periodic}
A.~Piatnitski and E.~Zhizhina.
\newblock Periodic homogenization of nonlocal operators with a convolution-type kernel.
\newblock \href{https://doi.org/10.1137/16M1072292}{{\em SIAM J. Math. Anal.}, 49(1):64--81, 2017}.

\bibitem[Sen06]{SenetaNN}
E.~Seneta.
\newblock {\em Non-negative matrices and {M}arkov chains}.
\newblock Springer Series in Statistics. Springer, New York, 2006.
\newblock Revised reprint of the second (1981) edition [Springer-Verlag, New
  York; MR0719544].

\bibitem[ZP06]{zhikov2006homogenization}
V.~V. Zhikov and A.~L. Pyatnitski{\u\i}.
\newblock Homogenization of random singular structures and random measures.
\newblock \href{https://doi.org/10.1070/IM2006v070n01ABEH002302}{{\em Izvestiya: Mathematics}, 70(1):19--67, 2006}.

\end{thebibliography}

\end{document}